\newtheorem{thm}{Theorem}
\newtheorem{lem}[thm]{Lemma}
\newtheorem{prop}[thm]{Proposition}
\newtheorem{cor}[thm]{Corollary}
\theoremstyle{definition}
\newtheorem{defn}[thm]{Definition}
\newtheorem{ex}[thm]{Example}
\newtheorem{rem}[thm]{Remark}
\newtheorem{ques}[thm]{Question}
\renewcommand\Pr[1]{\mathbb{P}\left(#1\right)}
\newcommand\Es[1]{\mathbb{E}\left[#1\right]}
\def \P {\mathbb{P}}
\def \N {\mathbb N}
\def \D {\mathbb D}
\def \NC {\mathbb{NC}}
\def \T {\mathbb T}
\def \CRT {\mathcal T}
\def \U {\mathbb U}
\def \e {\mathrm{e}}
\def \i {\mathrm{i}}
\def \deg {\mathrm{deg}~}
\def \bell {\boldsymbol{\ell}}
\def \R {\mathbb R}
\def \Q {\mathbb Q}
\def \D {\mathbb D}
\def \Db {\overline{\mathbb{D}}}
\def \Z {\mathbb Z}
\def \C {\mathbb C}
\def \S {\mathbb{S}^1}
\def \W {\mathcal{W}}
\def \X {X^{\rm ex}_\alpha}
\def \H {H^{\rm ex}_\alpha}
\def \dim {\mathrm{dim}}					%dim de H
\def \dimMI {\underline{\mathrm{dim}}_{M}}	%dim de M inf
\def \dimMS {\overline{\mathrm{dim}}_{M}}	%dim de M inf
\newcommand{\cv}{\quad\mathop{\longrightarrow}^{}_{n \to \infty}\quad}
\newcommand{\cvloi}{\quad\mathop{\longrightarrow}^{(d)}_{n \to \infty}\quad}
\newcommand{\GW}{Bienaymé--Galton--Watson}
\long\def\symbolfootnote[#1]#2{\begingroup%
\def\thefootnote{\fnsymbol{footnote}}\footnote[#1]{#2}\endgroup}
\title{  \vspace {-2cm}\textbf{Triangulating stable laminations}}
\date{}
\DeclareSymbolFont{extraup}{U}{zavm}{m}{n}
\DeclareMathSymbol{\varheart}{\mathalpha}{extraup}{86}
\DeclareMathSymbol{\vardiamond}{\mathalpha}{extraup}{87}
\renewcommand*{\@fnsymbol}[1]{\ensuremath{\ifcase#1\or  \spadesuit \or \varheart\or \vardiamond \or \clubsuit \or
   \mathsection\or \mathparagraph\or \|\or **\or \dagger\dagger
   \or \ddagger\ddagger \else\@ctrerr\fi}}
\author{Igor Kortchemski\thanks{CNRS, CMAP, \'Ecole polytechnique, Université Paris-Saclay \hfill  \url{igor.kortchemski@normalesup.org}} 
\qquad \& \qquad Cyril Marzouk\thanks{Institut f\"ur Mathematik, Universit\"at Z\"urich.\hfill  \url{cyril.marzouk@math.uzh.ch}} 
}
\begin{document}

\maketitle

\let\thefootnote\relax\footnotetext{\emph{MSC2010 subject classifications}. Primary 05C80, 60C05; secondary: 05C05, 60J80. \\
 \emph{Keywords and phrases.} Noncrossing trees, simply generated trees, geodesic laminations.}
 
\vspace {-0.8cm}

\begin{abstract} 
We  study the asymptotic behavior of random simply generated noncrossing planar trees in the space of compact subsets of the unit disk, equipped with the Hausdorff distance. Their distributional limits are obtained by triangulating at random the faces of stable laminations, which are random compact subsets of the unit disk made of 
non-intersecting chords coded by stable Lévy processes. We also study other ways  to ``fill-in'' the faces of stable laminations, which leads us to introduce the iteration of  laminations and of  trees.
\end{abstract}

\vspace {-0.3cm}

\section{Introduction}

We are interested in the structure of large random noncrossing trees. By definition, a noncrossing tree with $n$ vertices is a tree drawn in the unit disk of the complex plane having as vertices the $n$-th roots of unity and whose edges are straight line segments which do not cross. The enumeration problem for noncrossing trees was first proposed as Problem E3170 in the American Mathematical Monthly \cite{E3170}. Dulucq \& Penaud \cite{DP93} established a bijection between noncrossing trees with $n$ vertices and ternary trees with $n$ internal vertices, thus showing that there are $ \frac{1}{2n-1} \binom{3n-3}{n-1}$ noncrossing trees with $n$ vertices in another way. Noy \cite{Noy98} pushed forward the enumerative study of noncrossing trees by counting them according to different statistics. Since, various authors have studied  combinatorial and algebraic properties of noncrossing  trees \cite{FN99,DSN02,DN02,PP02,Hou03}. See also \cite{MS09} for motivations from linguistics and proof theory, where noncrossing trees are for instance connected to the number of different readings of an ambiguous sentence. Other families of noncrossing configurations have also attracted some attention \cite{DFHN99,FN99,BPS10,CKdissections}.

 \begin{figure}[!h]
 \begin{center}
    \hfill \includegraphics[width=0.28 \linewidth]{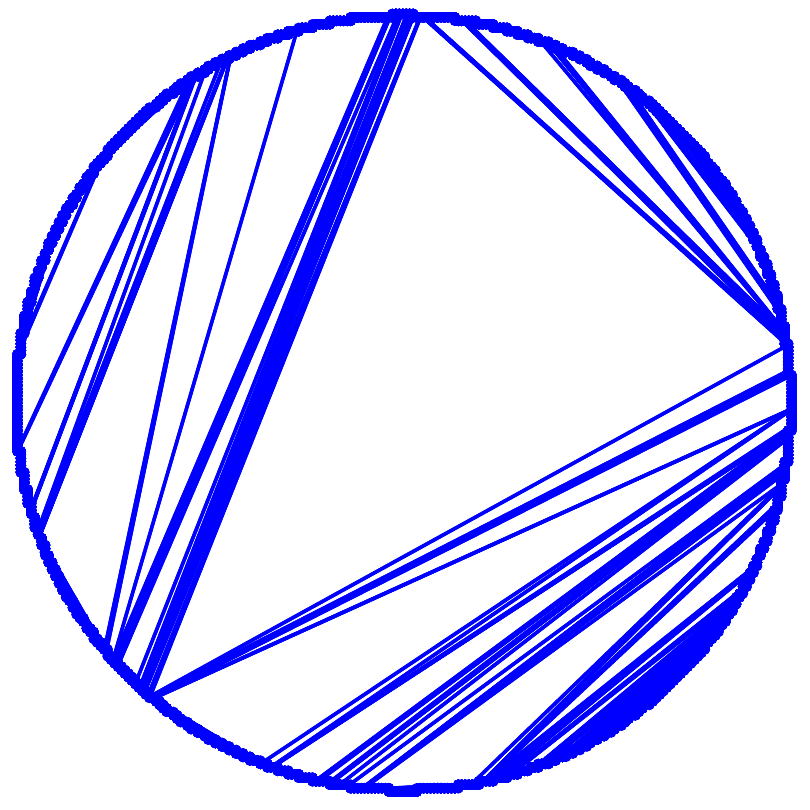}\hfill
    \includegraphics[width=0.28 \linewidth]{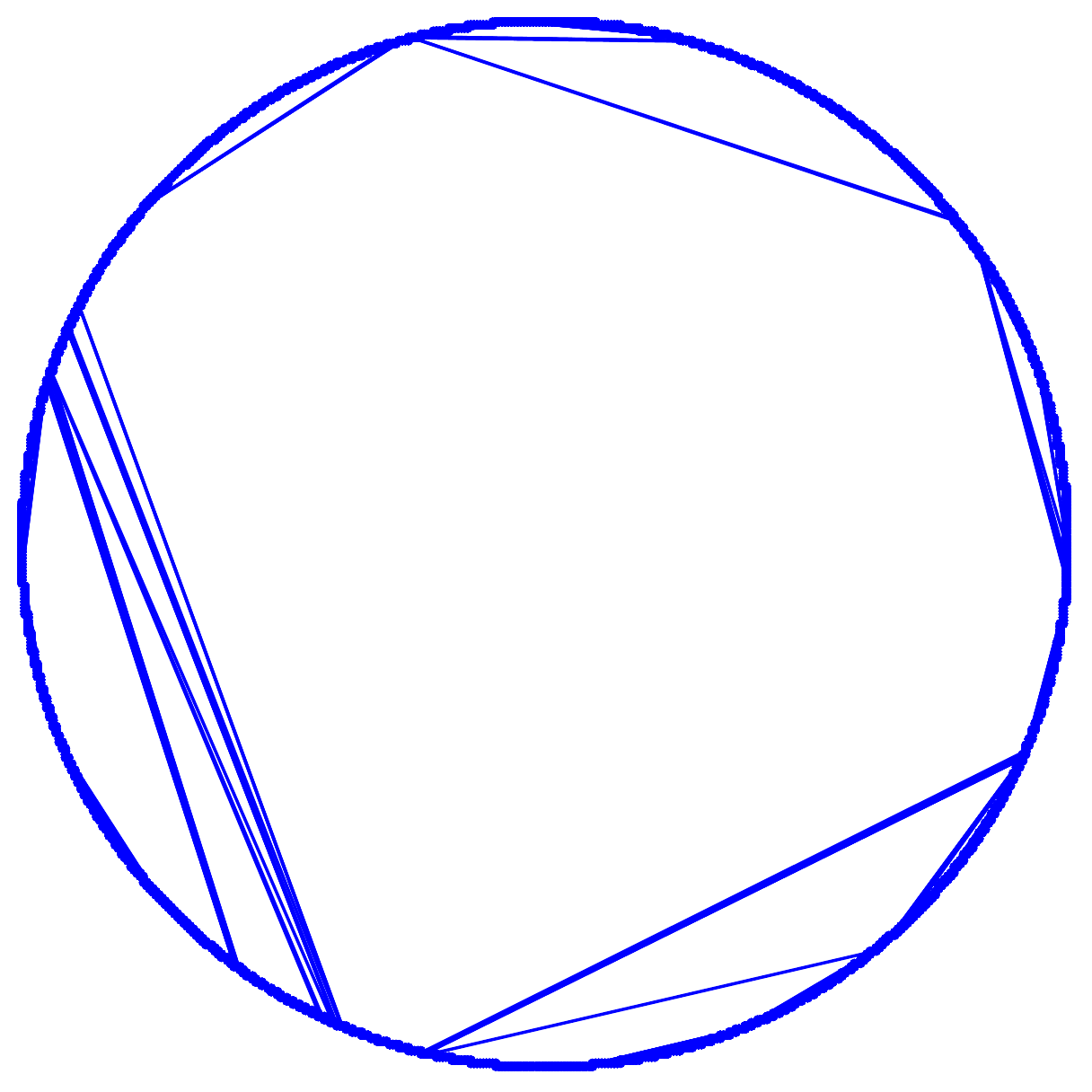}\hfill
  \includegraphics[width=0.28 \linewidth]{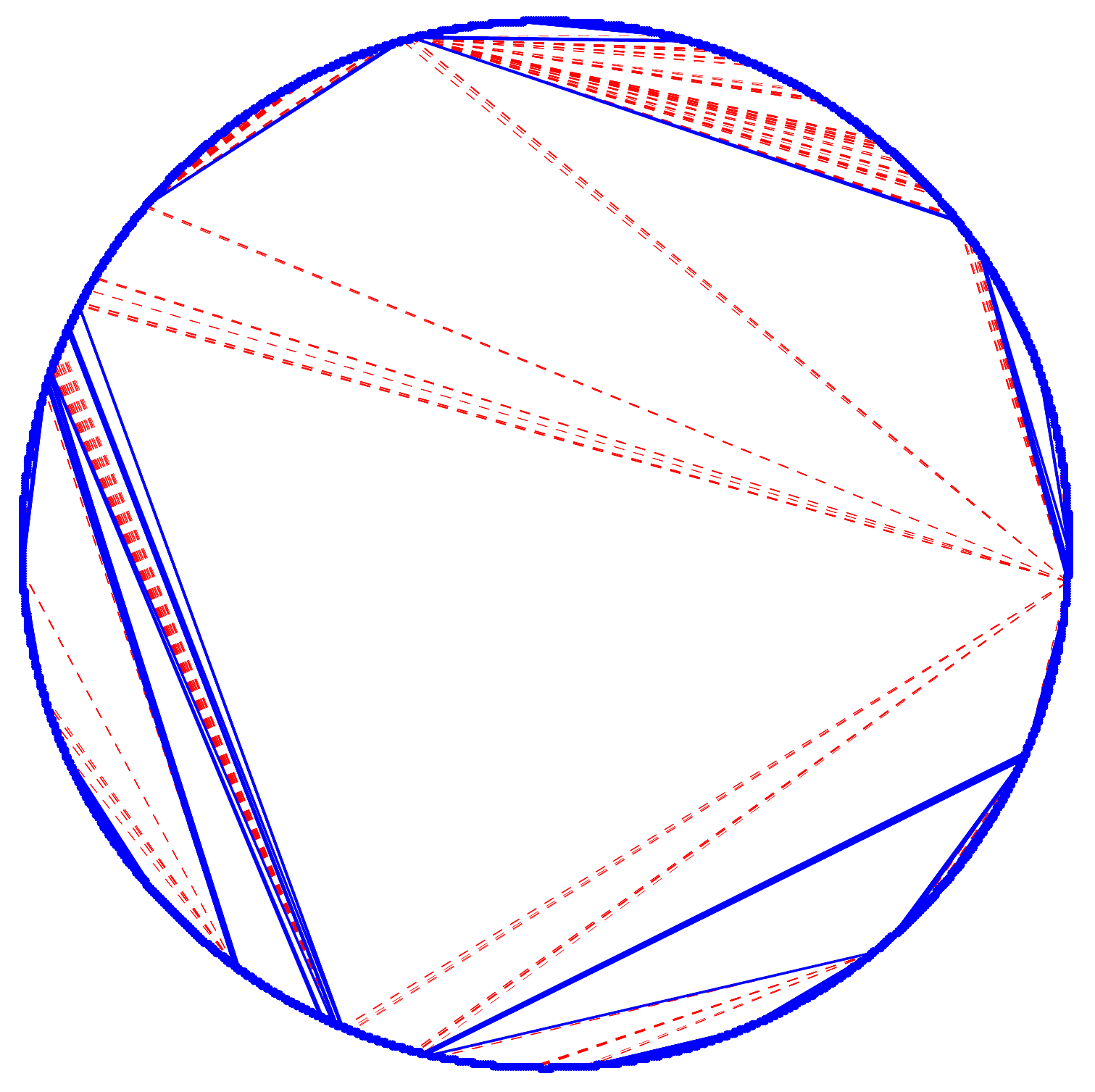}\hfill{}
   \caption{ \label{fig:simu} Simulations from left to right: the Brownian triangulation, an $\alpha=1.1$ stable lamination, and the same lamination with its faces triangulated ``uniformly'' in dashed red.}
 \end{center}
 \end{figure}
 
However, here we study the properties of \emph{random} noncrossing trees. Marckert \& Panholzer \cite{MP02} showed that uniform noncrossing trees on $n$ vertices are almost conditioned Bienaymé--Galton--Watson trees, thus obtaining interesting results concerning the structure of noncrossing trees by using the theory of random plane trees. Later, Curien \& Kortchemski \cite{CKdissections} studied uniform noncrossing trees on $n$ vertices as compact subsets of the unit disk.

In this work, our goal is to consider different ways of choosing noncrossing trees at random, and to study how the geometrical constraint of their planar embeddings influences their structure.

\paragraph{Noncrossing trees seen as subsets of the plane.} Since noncrossing trees are given with a plane embedding, we naturally view them as subsets of the unit disk by considering each edge as a line segment. This idea goes back to Aldous \cite{Ald94b}, who showed that if $P_{n}$ is the regular polygon formed by the $n$-th roots of unit, then, as $n \rightarrow \infty$, a uniform random triangulation of $P_{n}$ converges in distribution in the space of compact subsets of the unit disk equipped with the Hausdorff distance to a random compact subset of the unit disk $\mathbf{L}_2$ called the Brownian triangulation. This set is indeed a triangulation, as its complement in the unit disk is a disjoint union of triangles, and can be built from the Brownian excursion (see Sec.~\ref{sec:trig} below for details). Curien \& Kortchemski \cite{CKdissections} showed that the Brownian triangulation is the universal limit of various classes of uniform random noncrossing graphs built using the vertices of $P_{n}$, such as dissections (which are collections of noncrossing diagonals of $P_{n}$), noncrossing partitions or noncrossing trees. In this spirit, Kortchemski \& Marzouk \cite{KM15} also studied simply generated noncrossing partitions.

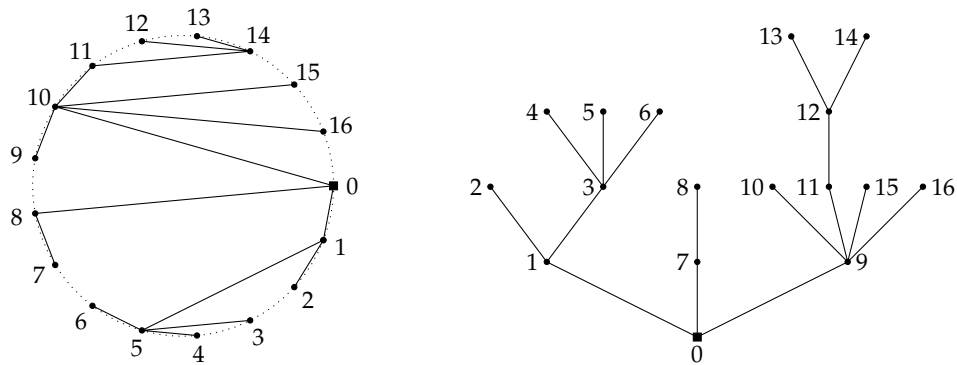
\begin{figure}[ht] \centering
\hfill
\begin{footnotesize}
\hfill
%
%%% LA VERSION NON-CROISÉE
\begin{tikzpicture}%[scale=1.25]
\draw[dotted]	(0,0) circle (2);
\foreach \x in {0, 1, 2, ..., 16}
	\coordinate (\x) at (-\x*360/17 : 2);
\foreach \x in {1, 1, 2, ..., 16}
	\draw[fill=black]	(\x) circle (1pt);
%\foreach \x in {0, 1, 2, ..., 16}
%	\draw[white]	(-\x*360/17 : 2*1.125) node {\x};
\draw
	(-0*360/17 : 2*1.125) node {0}
	(-1*360/17 : 2*1.125) node {1}
	(-2*360/17 : 2*1.125) node {2}
	(-3*360/17 : 2*1.125) node {3}
	(-4*360/17 : 2*1.125) node {4}
	(-5*360/17 : 2*1.125) node {5}
	(-6*360/17 : 2*1.125) node {6}
	(-7*360/17 : 2*1.125) node {7}
	(-8*360/17 : 2*1.125) node {8}
	(-9*360/17 : 2*1.125) node {9}
	(-10*360/17 : 2*1.125) node {10}
	(-11*360/17 : 2*1.125) node {11}
	(-12*360/17 : 2*1.125) node {12}
	(-13*360/17 : 2*1.125) node {13}
	(-14*360/17 : 2*1.125) node {14}
	(-15*360/17 : 2*1.125) node {15}
	(-16*360/17 : 2*1.125) node {16}
;
\draw(0) node [rectangle, scale=.5, fill=black, draw]{};
\draw
%	(0) -- (1)
%	(1) -- (2) (1) -- (3)
%	(3) -- (4)	(3) -- (5)	(3) -- (6)
%	(0) -- (7) -- (8)
%	(0) -- (9)
%	(9) -- (10)	(9) -- (11)	(9) -- (15)	(9) -- (16)
%	(11) -- (12) -- (13)	(12) -- (14)
	(0) -- (1)
	(1) -- (2) (1) -- (5)
	(5) -- (4)	(5) -- (3)	(5) -- (6)
	(0) -- (8) -- (7)
	(0) -- (10)
	(10) -- (9)	(10) -- (11)	(10) -- (15)	(10) -- (16)
	(11) -- (14) -- (13)	(14) -- (12)
;
\end{tikzpicture}
\hfill
%%% L'ARBRE PLANAIRE
\begin{tikzpicture}%[scale=1.25]
\coordinate (0) at (0,0);
	\coordinate (1) at (-2,1);
		\coordinate (11) at (-2.75,2);
		\coordinate (12) at (-1.25,2);
			\coordinate (121) at (-2,3);
			\coordinate (122) at (-1.25,3);
			\coordinate (123) at (-.5,3);
	\coordinate (2) at (0,1);
		\coordinate (21) at (0,2);
	\coordinate (3) at (2,1);
		\coordinate (31) at (1,2);
		\coordinate (32) at (1.75,2);
			\coordinate (321) at (1.75,3);
				\coordinate (3211) at (1.25,4);
				\coordinate (3212) at (2.25,4);
		\coordinate (33) at (2.25,2);
		\coordinate (34) at (3,2);
\draw
	(0) -- (1)
	(1) -- (11) (1) -- (12)
	(12) -- (121)	(12) -- (122)	(12) -- (123)
	(0) -- (2) -- (21)
	(0) -- (3)
	(3) -- (31)	(3) -- (32)	(3) -- (33)	(3) -- (34)
	(32) -- (321) -- (3211)	(321) -- (3212)
;
\draw[fill=black]
%	(0) circle (1pt)
	(1) circle (1pt)
	(11) circle (1pt)
	(12) circle (1pt)
	(121) circle (1pt)
	(122) circle (1pt)
	(123) circle (1pt)
	(2) circle (1pt)
	(3) circle (1pt)
	(32) circle (1pt)
	(321) circle (1pt)
	(21) circle (1pt)
	(31) circle (1pt)
	(33) circle (1pt)
	(34) circle (1pt)
	(3211) circle (1pt)
	(3212) circle (1pt)
;
\draw(0) node [rectangle, scale=.5, fill=black, draw]{};
%
% Labels
\draw
	(0) node[below] {0}
	(1) node[left] {1}
	(11) node[left] {2}
	(12) node[left] {3}
	(121) node[left] {4}
	(122) node[left] {5}
	(123) node[left] {6}
	(2) node[left] {7}
	(21) node[left] {8}
	(3) node[right] {9}
	(31) node[left] {10}
	(32) node[left] {11}
	(321) node[left] {12}
	(3211) node[left] {13}
	(3212) node[left] {14}
	(33) node[right] {15}
	(34) node[right] {16}
;
\end{tikzpicture}
\hfill{}
\end{footnotesize}
\caption{A non-crossing tree with its vertices labelled in clockwise-order and the associated plane tree, called its shape, with its vertices labelled in lexicographical order.}
\label{fig:arbres}
\end{figure}

Kortchemski \cite{Kor11} constructed a one parameter family $\mathbf{L}_\alpha$ of random compact subsets of the unit disk indexed by $\alpha \in (1,2)$  called stable laminations, which are the distributional limits of the more general model of Boltzmann random dissections chosen at random according to certain sequences of weights. Stable laminations are coded by excursions of spectrally positive strictly stable Lévy processes, and unlike the Brownian triangulation, their faces are surrounded by infinitely many chords (see Fig.~\ref{fig:simu} for a simulation and Sec.~\ref{sec:lam} below for details).

\paragraph{Simply generated noncrossing trees.} In this work, we introduce and study the asymptotic behavior of simply generated noncrossing trees in the space of compact subsets of the unit disk equipped with the Hausdorff distance. Given a sequence of non-negative real numbers $(w(k) : k \ge 1)$, we define the weight of a noncrossing tree $\theta$ by
\begin{equation}
\Omega^w(\theta) = \prod_{u \in \theta} w(\deg u).
\end{equation}
Next, for every integer $n \geq 1$, we denote by $\NC_n$ the set of noncrossing trees with $n$ vertices and we set
\begin{equation}
Z_n^w = \sum_{\theta \in \NC_n} \Omega^w(\theta).
\end{equation}
Finally, if $Z_n^w > 0$ (and we will always implicitly restrict our attention to those values of $n$ for which it is the case), we define a probability measure on $\NC_n$ by
\begin{equation}\label{eq:def_arbres_nc_simplement_generes}
\P_n^w(\theta) = \frac{1}{Z_n^w} \Omega^w(\theta) \qquad\text{for all}\quad \theta \in \NC_n.
\end{equation}
A random noncrossing tree sampled according to $\P_n^w$ is called \emph{simply generated}. We choose this terminology because of the similarity with the model of simply generated plane trees, introduced by Meir \& Moon \cite{MM78}. 

For example, if $w \equiv 1$, $\P_n^w$ is the uniform distribution on $\NC_n$. More generally, if $ \mathcal{A}$ is a subset of $\N$ which contains $1$ and if $w(k)= \mathbbm{1}_{ k \in \mathcal{A}}$, then $\P_n^w$ is the uniform distribution on the set of all noncrossing trees with $n$ vertices with all degrees belonging to $ \mathcal{A}$.

\begin{thm}\label{thm:convergence_arbres_SG}
Fix $\alpha \in (1,2]$. There exists a random compact subset of the unit disk, denoted by $\mathbf{L}^{\rm U}_\alpha$, such that the following holds. Let $(w(k) : k \ge 1)$ be a sequence of nonnegative real numbers such that there exists $b > 0$ satisfying
\begin{equation}\label{eq:defb}\sum_{k=0}^\infty (k+1)(k-1) w(k+1) b^k = 0,
\end{equation}
and, moreover, such that the
probability measure
\begin{equation}
\mu(k) = \frac{(k+1) w(k+1) b^k}{\sum_{\ell=0}^\infty (\ell+1) w(\ell+1) b^\ell} \qquad (k \ge 0)
\end{equation}
belongs to the domain of attraction of a stable law of index $\alpha$. If $\mathscr{T}_n$ is a random noncrossing tree sampled according to $\P_n^w$,  then the convergence
\begin{equation}
\mathscr{T}_{n} \cvloi \mathbf{L}^{\rm U}_\alpha
\end{equation}
holds in distribution for the Hausdorff distance on the space of all compact subsets of $\overline{\D}$.
\end{thm}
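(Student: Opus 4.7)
The plan is to reduce the convergence to a scaling limit for simply generated plane trees via a bijective encoding of noncrossing trees, deduce from it the Hausdorff convergence of the ``shape chords'' towards the stable lamination $\mathbf{L}_\alpha$, and then identify the limit of the random ``filling chords'' inside each face of $\mathbf{L}_\alpha$.

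\textbf{Step 1 (encoding).} To a noncrossing tree $\theta$ with $n$ vertices we associate its shape $\tau(\theta)$: the plane tree obtained by rooting $\theta$ at vertex $0$ and declaring the order of children of every vertex to be the clockwise cyclic order of the incident edges in the disk (see Fig.~\ref{fig:arbres}). A non-root vertex of degree $d$ in $\theta$ has out-degree $c = d-1$ in $\tau(\theta)$, and the number of noncrossing trees with a prescribed shape $\tau$ has a product form over the non-root vertices proportional to $\prod_u(c_u+1)$, reflecting the choice at each such vertex $u$ of the cyclic position of the parent edge among the $c_u+1$ gaps between its children. Consequently, the law of $\tau(\mathscr T_n)$ under $\P_n^w$ is proportional to $w(c_0)\prod_{u\neq 0}(c_u+1)w(c_u+1)$, which, after exponential tilting by $b^{|\tau|}$ (constant when $|\tau|=n$), is precisely the distribution of a \GW{} tree with offspring law $\mu$, conditioned on having $n$ vertices (up to a negligible modification at the root).

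\textbf{Step 2 (scaling limit of the shape).} The criticality condition \eqref{eq:defb} and the stable domain-of-attraction hypothesis on $\mu$ place us in the scope of the now classical scaling-limit theory for conditioned \GW{} trees: the \L{}ukasiewicz path of $\tau(\mathscr T_n)$, suitably rescaled, converges in distribution to the normalized excursion $\X$ of the spectrally positive $\alpha$-stable L\'evy process. Following the construction of stable laminations recalled in \cite{Kor11, CKdissections}, this implies that the ``shape chords'' of $\mathscr T_n$---those chords produced by the canonical embedding of $\tau(\mathscr T_n)$ via its \L{}ukasiewicz exploration---converge in the Hausdorff topology on compact subsets of $\overline{\D}$ to $\mathbf{L}_\alpha$.

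\textbf{Step 3 (filling chords and definition of the limit).} Conditionally on $\tau(\mathscr T_n)$, the law of $\mathscr T_n$ is uniform over the $\prod_{u\neq 0}(c_u+1)$ admissible noncrossing embeddings, so the remaining ``filling chords'' are produced by independent uniform choices of cyclic position at each internal vertex. In the limit, each face of $\mathbf{L}_\alpha$ is in bijection with a macroscopic jump of $\X$ and is bounded by infinitely many shape chords; conditionally on its boundary, the filling converges to an independent Brownian-type random triangulation of that face. We then \emph{define} $\mathbf{L}^{\rm U}_\alpha$ to be $\mathbf{L}_\alpha$ enriched with these independent random triangulations of its faces, and we establish joint convergence $\mathscr T_n \to \mathbf{L}^{\rm U}_\alpha$. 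The main obstacle lies in this last step: since the faces of $\mathbf{L}_\alpha$ accumulate and each has infinite boundary length, one must simultaneously prove Hausdorff tightness of the filling chords and exclude spurious limit points arising from microscopic faces. This is addressed by a quantitative control on the large jumps of the rescaled \L{}ukasiewicz path (bounding the number of macroscopic faces) together with a uniform modulus-of-continuity estimate on the chord map coming from the convergence of the height process.
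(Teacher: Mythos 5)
Your outline correctly identifies Steps 1 and 2 and is broadly aligned with the paper's strategy: encode the noncrossing tree by its shape and a uniform decoration, show the shape is a (modified) conditioned \GW{} tree with offspring law $\mu$, and transfer the stable scaling limit of the \L{}ukasiewicz path. However, Step~3 contains a fatal error in the identification of the limit object, and this error would make the whole argument establish the wrong theorem.

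\textbf{The critical error.} You assert that inside each face of $\mathbf{L}_\alpha$ the filling chords ``converge to an independent Brownian-type random triangulation of that face,'' and you then define $\mathbf{L}^{\rm U}_\alpha$ as $\mathbf{L}_\alpha$ enriched by these. This is not what happens, and it is not the set $\mathbf{L}^{\rm U}_\alpha$ of the paper. Each face $V_s$ of $\mathbf{L}_\alpha$ arises from a single macroscopic jump of $\X$ at time $s$, which in the discrete tree corresponds to a single vertex of the shape having a very large number of children. All the edges of $\mathscr T_n$ that land inside that face emanate from that one high-degree vertex: in the Hausdorff limit they form a \emph{fan} $C_s(\bell)=\bigcup_{r\in B_s}[\e^{-2\i\pi p_s(\bell)},\e^{-2\i\pi r}]$ from one boundary point $p_s(\bell)$, whose relative position on the boundary is determined by a single uniform $[0,1]$ label $\ell_s$ (coming from the uniform choice of how many children of that vertex are ``folded left''). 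In particular the correct $\mathbf{L}^{\rm U}_\alpha$ is $L(\X,\bell^{\rm U})$ where $\bell^{\rm U}=(\ell_s)_{s\in J(\X)}$ is a family of i.i.d.\ uniforms, and the filling of a face is a star, not a fractal Brownian triangulation. This distinction is not cosmetic: the Hausdorff dimension $1+1/\alpha$ in Theorem~\ref{thm:dimensions_triangulations} is precisely the dimension of such fans over a boundary set of dimension $1/\alpha$ and would be different for a genuinely fractal filling. A Brownian triangulation would appear only if each face were refilled by an \emph{independent} uniform noncrossing tree of a large polygon; here a single discrete vertex is responsible for an entire face, so there is no mechanism generating independent internal randomness beyond the one uniform label.

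\textbf{Secondary issues.} Your concern in Step~3 about proving ``Hausdorff tightness of the filling chords'' is moot: the space of compact subsets of $\overline{\D}$ with the Hausdorff metric is itself compact, so subsequential limits exist automatically; the work is in identifying the limit, which the paper does by showing (Propositions~\ref{prop:invcontinu} and~\ref{prop:invcadlag}) that the maximal lamination $L(Z,\bell)$ is contained in any subsequential limit. The paper also deliberately works with the \L{}ukasiewicz path rather than the height/contour process (cf.\ Remark~\ref{rem:contour}), precisely because the required ``modulus of continuity of the chord map'' does not reduce cleanly to convergence of the height process; your proposed route via the height process would need the additional uniformity on leaves that the paper avoids. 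Finally, the phrase ``up to a negligible modification at the root'' in Step~1 hides a genuine piece of work: because the modified root law $\mu_\varnothing$ and $\mu$ may have different supports in the general weighted setting, the reduction to Duquesne's theorem requires a separate argument (in the paper, Theorem~\ref{thm:convergence_contour_arbre_SG} via Lemma~\ref{lem:maintool}(iii) on the largest tree in a conditioned forest), which goes beyond the Marckert--Panholzer case.
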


Recall that a probability distribution $\mu$ belongs to the domain of attraction of a stable law if either it has finite variance (in which case $\alpha=2$), or there exists a slowly varying function $g: \R^{+} \rightarrow  \R^{+}$ such that $\mu([n,\infty))=g(n)n^{-\alpha}$ for $n \geq 1$. See Remark \ref{rem:b} for a probabilistic interpretation of condition \eqref{eq:defb}.

Let us give a rough description of  $\mathbf{L}^{\rm U}_\alpha$. In the case $\alpha=2$, $\mathbf{L}^{\rm U}_2 = \mathbf{L}_2$ is simply Aldous' Brownian triangulation.  However, for $\alpha \in (1,2)$, $\mathbf{L}^{\rm U}_\alpha$ is a triangulation that strictly contains the $\alpha$-stable lamination $\mathbf{L}_\alpha$. Intuitively, $\mathbf{L}^{\rm U}_\alpha$ is constructed  from $\mathbf{L}_\alpha$ by ``triangulating'' each face of $\mathbf{L}_\alpha$ from a uniform random vertex, i.e.~by joining this vertex to each other vertex of the face by a chord. We refer the reader to Fig.~\ref{fig:simu} for a simulation and to Sec.~\ref{sec:trilam} for a precise definition. The random compact set $\mathbf{L}^{\rm U}_\alpha$ is called the \emph{uniform $\alpha$-stable triangulation}. It is interesting to note that unlike the Brownian triangulation or stable laminations, $\mathbf{L}^{\rm U}_\alpha$ is not simply coded by a function as we will see in Remark \ref{rem:codage}.

The main steps to prove Theorem \ref{thm:convergence_arbres_SG} are the following. We first establish deterministic invariance principles in the space of compact subsets of the unit disk (Propositions \ref{prop:invcontinu} and \ref{prop:invcadlag}) for noncrossing trees under conditions involving their \emph{shape}, which is the plane tree structure that they carry (see Fig.~\ref{fig:arbres} for an illustration).  We then establish (Theorem \ref{thm:arbre_SG_presque_GW}) that the shape of  $\mathscr{T}_n$  is a ``modified'' {\GW} tree, where the root has a different offspring distribution, conditioned to have size $n$. This extends a result of Marckert \& Panholzer \cite{MP02} for the uniform distribution. Finally, we show that such trees fulfill the framework of our invariance principles with high probability.

We also compute the Hausdorff dimension of the uniform $\alpha$-stable triangulation.

\begin{thm}\label{thm:dimensions_triangulations}
Fix $\alpha \in (1,2)$ and denote by $A(\mathbf{L}^U_\alpha)$  the set of all end-points of chords in $\mathbf{L}^U_\alpha$. Almost surely,
\begin{equation}
\dim(A(\mathbf{L}^{\rm U}_\alpha)) = \frac{1}{\alpha}
\qquad\text{and}\qquad
 \dim(\mathbf{L}^{\rm U}_\alpha) = 1 + \frac{1}{\alpha}.
\end{equation}
\end{thm}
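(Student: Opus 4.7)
My plan is to reduce both identities to the known Hausdorff-dimension results for the $\alpha$-stable lamination $\mathbf{L}_\alpha$ from \cite{Kor11} --- namely $\dim A(\mathbf{L}_\alpha) = 1/\alpha$ and $\dim \mathbf{L}_\alpha = 1 + 1/\alpha$ --- combined with an elementary cone estimate for the triangulating chords added inside each face.

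For the endpoint set, I will first observe that the triangulation procedure only draws chords between pairs of \emph{already existing} endpoints of $\mathbf{L}_\alpha$: each face's uniformly chosen apex and all other vertices of the face lie on $\partial \D$ and are already in $A(\mathbf{L}_\alpha)$. Hence $A(\mathbf{L}^{\rm U}_\alpha) = A(\mathbf{L}_\alpha)$, and the first identity follows immediately.

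For the lamination, the lower bound is immediate from $\mathbf{L}_\alpha \subset \mathbf{L}^{\rm U}_\alpha$ and $\dim \mathbf{L}_\alpha = 1+1/\alpha$. For the upper bound, I would write
\begin{equation}
\mathbf{L}^{\rm U}_\alpha \;=\; \mathbf{L}_\alpha \;\cup\; \bigcup_{j\geq 1} S_j,
\end{equation}
where $(F_j)_{j\geq 1}$ enumerates the countably many faces of $\mathbf{L}_\alpha$ and $S_j$ is the closed star of triangulating chords added inside $F_j$ from the apex $v_j$. Since $\mathbf{L}^{\rm U}_\alpha$ is compact, $S_j$ coincides with the cone from $v_j$ over $\overline{V(F_j)}$, where $V(F_j) \subset \partial \D$ is the set of endpoints of chords bordering $F_j$. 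As the Lipschitz image of $\overline{V(F_j)} \times [0,1]$,
\begin{equation}
\dim S_j \;\leq\; \dim \overline{V(F_j)} + 1 \;\leq\; \dim A(\mathbf{L}_\alpha) + 1 \;=\; 1 + \tfrac{1}{\alpha},
\end{equation}
using the inclusion $\overline{V(F_j)} \subset A(\mathbf{L}_\alpha)$. Countable stability of Hausdorff dimension then yields $\dim \mathbf{L}^{\rm U}_\alpha \leq 1+1/\alpha$.

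The substantive ingredients are thus imported from \cite{Kor11} (both dimension results for $\mathbf{L}_\alpha$). The new contribution here --- the cone estimate on each $S_j$, combined with countable-sup stability --- is essentially routine once those dimensions are in hand. The one point requiring care is the passage to the closure when defining $S_j$ (forced by compactness of $\mathbf{L}^{\rm U}_\alpha$), but this is harmless: accumulation points of $V(F_j)$ automatically belong to $A(\mathbf{L}_\alpha)$, so the Lipschitz upper bound is preserved. Should one prefer not to invoke the full value $\dim \mathbf{L}_\alpha = 1+1/\alpha$ from \cite{Kor11}, the lower bound could instead be obtained intrinsically by exhibiting a single face $F_\star$ with $\dim \overline{V(F_\star)} = 1/\alpha$ almost surely and applying the bi-Lipschitz cone parameterization to transport a Frostman measure; this would be the main technical difficulty, requiring a scaling/excursion analysis of the vertex set of a typical face.
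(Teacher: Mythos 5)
Your proposal rests on misremembered dimension formulas, and as a result the entire argument collapses. In \cite{Kor11} (quoted explicitly in the paper as \eqref{eq:dim_Hausdorff_lamination_stable}), the stable lamination satisfies $\dim A(\mathbf{L}_\alpha) = 1 - 1/\alpha$ and $\dim \mathbf{L}_\alpha = 2 - 1/\alpha$, \emph{not} $1/\alpha$ and $1 + 1/\alpha$. For $\alpha \in (1,2)$ one has $1/\alpha > 1 - 1/\alpha$ and $1 + 1/\alpha > 2 - 1/\alpha$, so the theorem's actual content is that triangulating each face of $\mathbf{L}_\alpha$ strictly \emph{increases} the Hausdorff dimension; your proof, by misquoting the input, inadvertently assumes the conclusion.

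Concretely, each of your steps fails. (a) For the endpoint set: the triangulation does not draw chords only between pre-existing endpoints of $\mathbf{L}_\alpha$. Inside a face $V$ with associated jump time $s$, the construction joins the apex $p_s(\bell)$ to \emph{every} point of $B_s = \overline{V}\cap\S$, which is a Cantor-type set of dimension $1/\alpha$ (see \eqref{eq:sommets_face}), not merely to the countable set of chord endpoints bordering $V$. Hence $A(\mathbf{L}^{\rm U}_\alpha) = \bigcup_V (\overline{V}\cap\S)$, which strictly contains $A(\mathbf{L}_\alpha)$ and has strictly larger dimension; your identity $A(\mathbf{L}^{\rm U}_\alpha) = A(\mathbf{L}_\alpha)$ is false, and so is the inclusion $\overline{V(F_j)}\subset A(\mathbf{L}_\alpha)$ you use in the cone bound — again by dimension counting, a set of dimension $1/\alpha$ cannot sit inside one of dimension $1 - 1/\alpha$. (b) For the lower bound on $\dim \mathbf{L}^{\rm U}_\alpha$: the inclusion $\mathbf{L}_\alpha \subset \mathbf{L}^{\rm U}_\alpha$ only gives $\dim \mathbf{L}^{\rm U}_\alpha \geq 2 - 1/\alpha$, which falls short of $1 + 1/\alpha$. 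This is exactly where the paper must do real work: it pushes a Frostman measure on $A_V = \overline{V}\cap\S$ forward to the cone $C_s(\bell)$ via $\Lambda(B) = \int \nu(\mathrm{d}x)\,\lambda_x(B)$ and checks the Frostman scaling $\Lambda(B(z_0,\varepsilon)) \lesssim \varepsilon^{1+\gamma}$ to get $\dim C_s(\bell) \geq 1 + \dim A_V$, then closes with a Minkowski-dimension upper bound. Your parenthetical remark at the end (exhibit a face with boundary of dimension $1/\alpha$ and transport a Frostman measure through the cone) is in fact the correct approach and is not an optional alternative — it is the heart of the proof — though note the cone map is not bi-Lipschitz near the apex, so one must verify the Frostman estimate directly rather than invoke bi-Lipschitz invariance.
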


It is interesting to compare these dimensions with those of stable laminations computed in \cite{Kor11}, which are equal to respectively $1-1/\alpha$ and $2-1/\alpha$. Since $1+1/\alpha>3/2>2-1/\alpha$, the uniform $\alpha$-stable triangulation is ``fatter'' than the Brownian triangulation and any $\beta$-stable lamination.

\paragraph{Applications.} 
An interesting consequence of Theorem \ref{thm:convergence_arbres_SG} is that the geometry of large simply generated noncrossing trees may be very different from that of large simply plane trees generated with the same weights, see Remark \ref{rem:weights}. Theorem \ref{thm:convergence_arbres_SG} also has applications concerning the length of the longest chord of a noncrossing tree. By definition, the (angular) length of a chord  $[\e^{-2i\pi s},\e^{-2i\pi t}]$ with $0 \leq s \leq t \leq 1$ is $\min(t-s,1-t+s)$. Denote by ${\Lambda}(\theta)$ the length of the longest chord of a noncrossing tree $\theta$ and by $ \Lambda(\mathbf{L}^{\rm U}_\alpha)$ the length of the longest chord of $\Lambda(\mathbf{L}^{\rm U}_\alpha)$.

\begin{cor}\label{cor:chorde}Under the assumptions of Theorem \ref{thm:convergence_arbres_SG}, we have
$$\Lambda(\mathscr{T}_{n})  \quad \mathop{\longrightarrow}^{(d)}_{n \rightarrow \infty} \quad \Lambda(\mathbf{L}^{\rm U}_\alpha).$$
\end{cor}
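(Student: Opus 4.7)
The plan is to derive the corollary from Theorem~\ref{thm:convergence_arbres_SG} by proving that the functional $\Lambda$ is continuous, for the Hausdorff distance on compact subsets of $\overline{\D}$, at every \emph{lamination}---that is, at every compact union of chords and boundary points with no two chords meeting in the open disk. Since $\mathbf{L}^{\rm U}_\alpha$ is almost surely such a lamination (by its construction as a triangulation of the faces of $\mathbf{L}_\alpha$ from a single vertex, cf.~Section~\ref{sec:trilam}), the continuous mapping theorem will then deliver the result at once.

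For the upper semicontinuity I would proceed as follows. Let $K_n \to K$ in Hausdorff distance with $K$ a lamination and let $[a_n, b_n] \subseteq K_n$ be a longest chord; extracting a subsequence, one may assume $a_n \to a$ and $b_n \to b$. Since every convex combination $(1-t) a_n + t b_n$ lies in $K_n$, by Hausdorff convergence $(1-t) a + t b$ lies in the closed set $K$, so $[a,b] \subseteq K$ and $\Lambda(K) \geq \lim \Lambda(K_n)$.

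The heart of the argument is the lower semicontinuity, where the lamination property of $K$ is essential. The chords of $K$ of length at least any $\varepsilon > 0$ form a compact subset of $(\mathbb{S}^1)^2$, so the supremum $\Lambda(K)$ is attained by some chord $[a,b] \subseteq K$. Let $c = (a+b)/2$, which lies strictly inside the disk; Hausdorff convergence provides $c_n \in K_n$ with $c_n \to c$, and each $c_n$ lies on some chord $[a_n, b_n]$ of $K_n$. Along any subsequence with $a_n \to a'$ and $b_n \to b'$, the argument of the upper bound yields $[a', b'] \subseteq K$ with $c$ in its interior; the lamination property then forces $\{a', b'\} = \{a, b\}$, since two chords of $K$ sharing an interior point must coincide. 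As this holds for every convergent subsequence, $|a_n - b_n| \to |a - b|$ along the full sequence, whence $\liminf \Lambda(K_n) \geq |a - b| = \Lambda(K)$.

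The main---really the only---obstacle is to confirm that $\mathbf{L}^{\rm U}_\alpha$ is almost surely a genuine lamination, i.e.\ that triangulating each face of $\mathbf{L}_\alpha$ does not create any chord crossings. This is built into the construction in Section~\ref{sec:trilam}, where each face is triangulated internally from one of its own vertices and thus the new chords stay inside a single face of $\mathbf{L}_\alpha$. Granted this, the two semicontinuity steps combine to yield continuity of $\Lambda$ at $\mathbf{L}^{\rm U}_\alpha$ almost surely, and Corollary~\ref{cor:chorde} follows by the continuous mapping theorem.
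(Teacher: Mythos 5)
Your proposal is correct and follows the same route as the paper: the paper simply asserts that $\Lambda$ is continuous for the Hausdorff distance on compact unions of noncrossing chords (laminations) and invokes the continuous mapping theorem, and you supply the standard upper/lower semicontinuity argument behind that assertion, with the lamination property of $\mathbf{L}^{\rm U}_\alpha$ (Proposition~\ref{prop:triangulation_stable_canonique}) ensuring that two chords through a common interior point must coincide.
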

This simply follows from Theorem \ref{thm:convergence_arbres_SG} since the longest chord is a continuous functional for the Hausdorff distance on compact subsets of the unit disk obtained as the union of noncrossing chords. In the case $\alpha=2$, it is known  \cite{Ald94b,DFHN99} that the law of the longest chord of the Brownian triangulation has density
\begin{equation}
\label{eq:cB} \frac{1}{\pi} \frac{3x-1}{ x^{2}(1-x)^{2} \sqrt{1-2x}} \mathbbm{1}_{ \frac{1}{3} \leq x \leq \frac{1}{2}} \ \mathrm{d}x.
\end{equation}
It would be interesting to find an explicit formula for the length of the longest chord of the uniform $\alpha$-stable triangulation for $ \alpha \in (1,2)$. See \cite[Proposition 4.3.]{Shi15} for the expression of the cumulative distribution function of the length of the longest chord in the $\alpha$-stable lamination.

\begin{ex}If $ \mathcal{A}$ is a non-empty subset of $\N$ with $1 \in \mathcal{A}$ and $ \mathcal{A} \neq \{1,2\}$, let $\mathscr{T}_{n}^{ \mathcal{A} }$ be a random noncrossing tree chosen uniformly at random among all those with $n$ vertices and degrees belonging to $ \mathcal{A}$ (provided that they exist). Then $\mathscr{T}_{n}^{ \mathcal{A} }$ converges in distribution to the Brownian triangulation as $n \rightarrow \infty$.
Indeed, this follows from Theorem \ref{thm:convergence_arbres_SG} by taking $w(k)=\mathbbm{1}_{k \in \mathcal{A} }$, as in this case $\mu$ admits finite small exponential moments (since $b<1$, see the beginning of the  proof of Theorem \ref{thm:enumA} below).  Theorem \ref{thm:convergence_arbres_SG} thus extends Theorem 3.1 in \cite{CKdissections}, which shows the convergence to the Brownian triangulation of large uniform noncrossing trees. Also, by Corollary \ref{cor:chorde}, the length of the longest chord of $\mathscr{T}_{n}^{ \mathcal{A} }$ converges in distribution to the random variable whose law is given by \eqref{eq:cB}.  It is remarkable that this limiting distribution does not depend on $ \mathcal{A}$.
\end{ex}

\paragraph{Degree-constrained noncrossing trees.} Let $\mathcal{A} \subset \N$ be a non-empty subset with $1 \in \mathcal{A}$. We let $\NC_{n}^{\mathcal{A}}$ be the set of all noncrossing trees having $n$ vertices and with degrees only belonging to $ \mathcal{A}$. As an application of our techniques, we establish the following enumerative result.

\begin{thm}\label{thm:enumA}
Assume that $ \mathcal{A} \neq \{1,2\}$. Let $b>0$ be such that $\sum_{k+1 \in \mathcal{A} }(k+1)(k-1) b^k = 0$ and define
$$K_{\mathcal{A}} \quad  \coloneqq \quad \gcd( \mathcal{A}-1) \cdot  \sqrt{ \frac{ \sum_{k+1 \in \mathcal{A} } (k+1)b^{k}}{ 2 \pi \sum_{k+1 \in \mathcal{A}} (k+1)(k^{2}-1)b^{k}}} \cdot \left( \sum_{k \in \mathcal{A}} k b^{k} \right).$$
We have
$$ \# \NC_{n}^{\mathcal{A}}   \quad\mathop{\sim}_{n \rightarrow \infty}\quad  K_{\mathcal{A}} \cdot  \left( \sum_{k+1 \in \mathcal{A}}(k+1)b^{k-1} \right)^{n-1} \cdot  n^{-3/2},$$
where the limit is taken along the subsequence of those values of $n$ for which $\NC_{n}^{\mathcal{A}}  \neq \varnothing$.
\end{thm}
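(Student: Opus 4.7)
Take $w(k) = \mathbbm{1}_{k \in \mathcal{A}}$, so that $Z_n^w = \#\NC_n^{\mathcal{A}}$. The plan is to rewrite this enumeration as a coefficient extraction involving a critical {\GW} tree, and then conclude by standard singularity analysis. A preliminary check: under the hypothesis $\mathcal{A} \neq \{1,2\}$, the set $\mathcal{A}$ contains some integer $\geq 3$, so the quantity $\sum_{k+1\in\mathcal{A}}(k+1)(k-1)b^k$ is strictly increasing in $b \geq 0$, equals $-1$ at $b=0$ and is positive at $b=1$; hence a unique $b \in (0,1)$ solves \eqref{eq:defb}. Consequently $\mu$ has exponential moments, hence finite variance $\sigma^2$, and \eqref{eq:defb} directly yields the useful identity $S \sigma^2 = \sum_{k+1\in\mathcal{A}}(k+1)(k^2-1)b^k$ with $S = \sum_{k+1\in\mathcal{A}}(k+1)b^k$.

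The next step is to invoke the combinatorial identity implicit in Theorem~\ref{thm:arbre_SG_presque_GW}: the number of noncrossing embeddings of a plane tree $\tau$ with $n$ vertices is $\prod_{v \neq \emptyset}(k_v(\tau)+1)$, where $k_v(\tau)$ denotes the number of children of $v$. Combining this with $\sum_v k_v(\tau) = n-1$ and with the identity $(k+1)\mathbbm{1}_{k+1\in\mathcal{A}} = \mu(k) S b^{-k}$ then yields the compact rewriting
\begin{equation*}
\#\NC_n^{\mathcal{A}} \; = \; B^{n-1} \cdot [x^{n-1}]\, \Psi\!\bigl(F(x)\bigr), \qquad B = \frac{S}{b}, \quad \Psi(y) = \sum_{k \in \mathcal{A}} b^k y^k,
\end{equation*}
where $F(x) = \sum_{n \geq 1} \P(|T|=n)\, x^n$ is the generating function of the size of a {\GW} tree $T$ with offspring distribution $\mu$.

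One then concludes by standard singularity analysis: since $\mu$ is critical with finite variance, $F(x) = 1 - \sqrt{2(1-x)/\sigma^2} + O(1-x)$ as $x \uparrow 1$, so $\Psi(F(x)) = \Psi(1) - \Psi'(1)\sqrt{2(1-x)/\sigma^2} + O(1-x)$. The Flajolet--Odlyzko transfer theorem (in its periodic version, accounting for the span $d = \gcd(\mathcal{A}-1)$ of $\mu$) then gives
\begin{equation*}
[x^{n-1}]\, \Psi(F(x)) \;\sim\; \frac{d \cdot \Psi'(1)}{\sqrt{2\pi\sigma^2}} \cdot n^{-3/2}
\end{equation*}
along the subsequence of positivity. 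Substituting $\Psi'(1) = \sum_{k \in \mathcal{A}} k b^k$ and the formula for $\sigma^2$ reproduces the announced constant $K_{\mathcal{A}}$. The main technical obstacle will be the careful bookkeeping of constants: in particular, recognizing $\sum_{k \in \mathcal{A}} k b^{k}$ as the natural root contribution (equivalently, as $C_* \cdot \E[\mu^*]$ where $\mu^*(k) \propto b^k \mathbbm{1}_{k \in \mathcal{A}}$ is the offspring distribution of the root in the modified {\GW} tree of Theorem~\ref{thm:arbre_SG_presque_GW}), and correctly handling periodicity to extract the factor $\gcd(\mathcal{A}-1)$ along the correct subsequence.
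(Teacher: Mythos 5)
Your combinatorial reduction is equivalent to the paper's: the identity
$\#\NC_n^{\mathcal{A}} = (S/b)^{n-1}\,[x^{n-1}]\Psi(F(x))$ is the same as the identity $\#\NC_n^{\mathcal{A}} = c^{-1}(ab)^{-(n-1)}\,\mathrm{BGW}^{\mu_\varnothing^{\mathcal{A}},\mu^{\mathcal{A}}}(|\mathcal{T}|=n)$ that the paper obtains from Theorem~\ref{thm:arbre_SG_presque_GW} and the bijection $\Phi_n$, after decomposing the size of the modified {\GW} tree by the root's number of children. The genuine difference is the asymptotic step: you use singularity analysis, expanding $F$ to first order in $\sqrt{1-x}$, composing with $\Psi$, and applying the (periodic) Flajolet--Odlyzko transfer theorem, whereas the paper applies Kemperman's formula and the local limit theorem through Lemma~\ref{lem:maintool}~(i), adapted to the periodic case. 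The two tools are essentially interchangeable in this finite-variance lattice setting, but the paper's choice is economical within its own framework --- Lemma~\ref{lem:maintool} is already needed for Theorem~\ref{thm:convergence_contour_arbre_SG} and Proposition~\ref{prop:plus_grand_sous_arbre_GW_modifie} --- and it is the one that extends without change to the infinite-variance regime. Your constant-chasing is correct; in particular your identity $S\sigma^2 = \sum_{k+1\in\mathcal{A}}(k+1)(k^2-1)b^k$ correctly uses the criticality relation $\sum_{k+1\in\mathcal{A}}(k^2-1)b^k = 0$ to pass from $\sum k(k^2-1)b^k$ to $\sum (k+1)(k^2-1)b^k$, and $\Psi'(1)=\sum_{k\in\mathcal{A}}kb^k$ is exactly the paper's $\sum_k k\,\mu_\varnothing^{\mathcal{A}}(k)/c$. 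Two details you should make explicit: the composition $\Psi\circ F$ requires $\Psi$ to be analytic at $y=1$, which holds because $b<1$ gives $\Psi$ radius of convergence $1/b>1$; and the periodicity of $\mu^{\mathcal{A}}$ (span $d=\gcd(\mathcal{A}-1)$) forces $[x^{n-1}]\Psi(F(x))=0$ unless $n\equiv 2\pmod d$, which is precisely the subsequence on which $\NC_n^{\mathcal{A}}\neq\varnothing$, the factor $d$ coming from the sum of contributions of the $d$ conjugate singularities.
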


 We  give a simple proof of this  by using the probabilistic structure of simply generated non-crossing trees. For example, if $ \mathcal{A}=\mathbb{N}$, one finds that  $ \# \NC_{n} \sim  ({9 \sqrt{3\pi}})^{-1} \cdot (27/4)^{n} \cdot n^{-3/2}$ as $n \rightarrow \infty$, which is consistent with the fact that $ \# \NC_{n} =\frac{1}{2n-1} \binom{3n-3}{n-1}$.

 \begin{figure}[!h]
 \begin{center}
    \hfill \includegraphics[width=0.4 \linewidth]{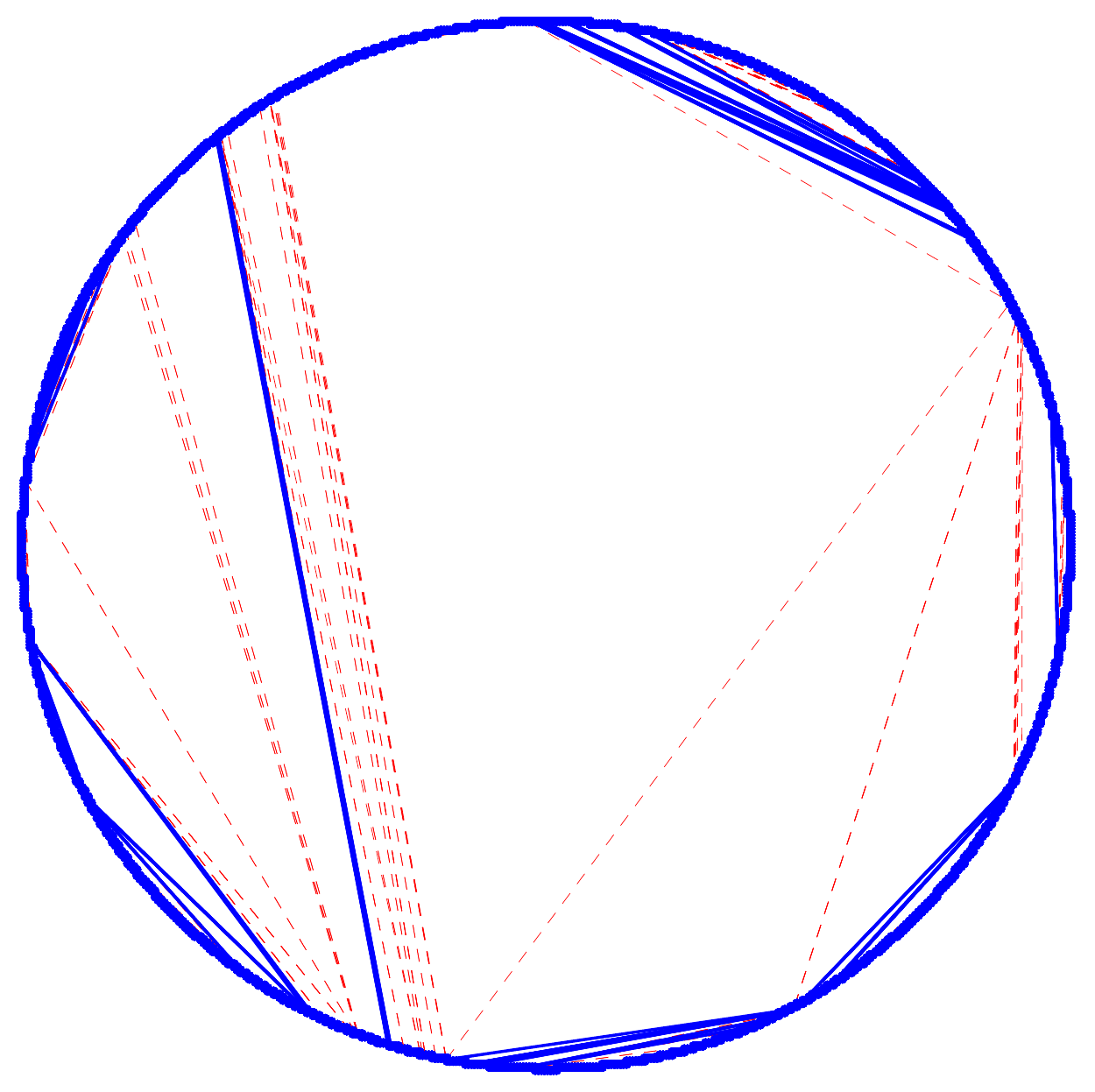}\hfill
    \includegraphics[width=0.4 \linewidth]{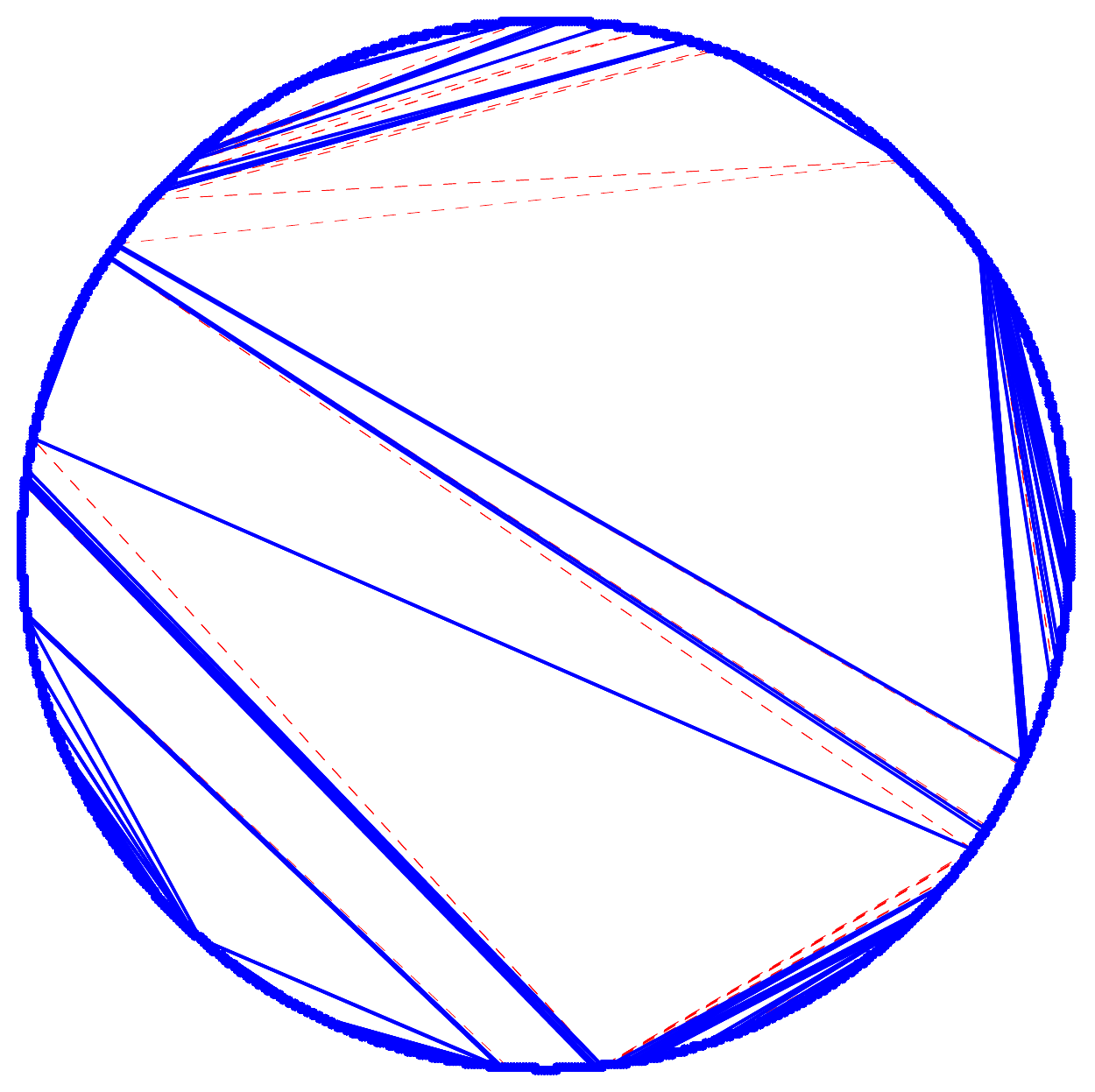}\hfill{}
   \caption{ \label{fig:iterate} Simulations from left to right: $\beta=1.4$ laminations iterated inside an $\alpha=1.1$, and  $\beta=1.1$ laminations iterated inside an $\alpha=1.4$. The chords of the $\beta$-stable laminations are in dashed red.}
 \end{center}
 \end{figure}

\paragraph{Iterating laminations.}
The random set  $\mathbf{L}^{\rm U}_\alpha$ is constructed from an $\alpha$-stable lamination $\mathbf{L}_\alpha$ by triangulating independently each face of $\mathbf{L}_\alpha$. More generally, one can consider independent random $\beta$-laminations in each face of $\mathbf{L}_\alpha$ (see Fig.~\ref{fig:iterate} for an illustration). We can also iterate this procedure: fix a sequence $(\alpha_k : k \ge 1)$ with values in $(1,2)$, let $\mathbf{L}^{(0)}$ be the unit circle and define next recursively for $n \ge 1$ random sets $\mathbf{L}^{(n)}$ by sampling independently an $\alpha_n$-stable lamination in each face of $\mathbf{L}^{(n-1)}$. We give a formal definition of this procedure in Sec.~\ref{sec:extensions}, with several possible further directions of research concerning the study of $\mathbf{L}^{(n)}$.

\paragraph{Acknowledgments.} I. K.~acknowledges partial support from Agence Nationale de la Recherche, grant number ANR-14-CE25-0014 (ANR GRAAL), and from the ``City of Paris, grant Emergences Paris 2013, Combinatoire à Paris''. C. M.~acknowledges support from the Swiss National Science Foundation 200021\_144325/1.

\tableofcontents

\section{Coding plane trees and noncrossing trees}
\label{sec:codings}

We start by explaining how we code plane trees and noncrossing trees. These codings are also useful to understand the intuition hiding behind the definitions of their continuous analogs.

\subsection{Plane trees}
\label{sec:planetrees}

%%%
\paragraph{Definitions.} We use Neveu's formalism \cite{Nev86} to define plane trees: let $\N = \{1, 2, \dots\}$ be the set of all positive integers, set $\N^0 = \{\varnothing\}$ and consider the set of labels $\U = \bigcup_{n \ge 0} \N^n$. For $u = (u_1, \dots, u_n) \in \U$, we denote by $|u| = n$ the length of $u$; if $n \ge 1$, we define $pr(u) = (u_1, \dots, u_{n-1})$ and for $i \ge 1$, we let $ui = (u_1, \dots, u_n, i)$; more generally, for $v = (v_1, \dots, v_m) \in \U$, we let $uv = (u_1, \dots, u_n, v_1, \dots, v_m) \in \U$ be the concatenation of $u$ and $v$. We endow $\U$ with the lexicographical order: given $v,w \in \U$, let $z \in \U$ be their longest common prefix, that is $v = z(v_1, \dots, v_n)$, $w = z(w_1, \dots, w_m)$ and $v_1 \ne w_1$, then $v \prec w$ if $v_1 < w_1$.

A plane tree is a nonempty finite subset $\tau \subset \U$ such that (i) $\varnothing \in \tau$; (ii) if $u \in \tau$ with $|u| \ge 1$, then $pr(u) \in \tau$; (iii)  if $u \in \tau$, then there exists an integer $k_u(\tau) \ge 0$ such that $ui \in \tau$ if and only if $1 \le i \le k_u(\tau)$.

We will view each vertex $u$ of a tree $\tau$ as an individual of a population for which $\tau$ is the genealogical tree. For $u,v \in \tau$, we let  $\llbracket u, v \rrbracket$ be the vertices belonging to the shortest path from $u$ to $v$. The vertex $\varnothing$ is called the root of the tree and for every $u \in \tau$, $k_u(\tau)$ is the number of children of $u$ (if $k_u(\tau) = 0$, then $u$ is called a leaf, otherwise, $u$ is called an internal vertex), $|u|$ is its generation, $pr(u)$ is its parent and more generally, the vertices $u, pr(u), pr \circ pr (u), \dots, pr^{|u|}(u) = \varnothing$ belonging to $ \llbracket \varnothing, u \rrbracket$ are its ancestors. To simplify, we will sometimes write $k_{u}$ instead of $k_{u}(\tau)$. We denote by $\T$ the set of all plane trees and for each integer $n \geq 1$, by $\T_n$ the set of plane trees with $n$ vertices.

\paragraph{Bienaymé--Galton--Watson trees.} Let $\mu$ be a critical probability measure on $\Z_+$, by which we mean that $\mu(0) > 0$, $\mu(0)+\mu(1)<1$ (to avoid trivial cases) and with expectation $\sum_{k = 0}^\infty k \mu(k) = 1$. The law of a {\GW} tree with offspring distribution $\mu$ is the unique probability measure $\mathrm{BGW}^\mu$ on $\T$ such that for every $\tau \in \mathbb{T}$,
\begin{equation}\label{eq:def_GW}
\mathrm{BGW}^\mu(\tau) = \prod_{u \in \tau} \mu(k_u).
\end{equation}
For each integer $n \ge 1$, we denote by $\mathrm{BGW}^\mu_n$ the law of a {\GW} tree with offspring distribution $\mu$ conditioned to have $n$ vertices; we shall always implicitly restrict ourselves to the values of $n$ such that the conditioning makes sense.
%%%

 \paragraph{Coding by the {\L}ukasiewicz path.} Fix a tree $\tau \in \T_n$ and let $\varnothing = u(0) \prec u(1) \prec \dots \prec u(n-1)$ be its vertices, listed in lexicographical order.  The {\L}ukasiewicz path $\W(\tau)= (\W_j (\tau): 0 \le j \le n)$ of $\tau$  is defined by $\W_0(\tau) = 0$ and for every $0 \le j \le n-1$,
\begin{equation}
\W_{j+1}(\tau) = \W_j(\tau) + k_{u(j)}(\tau)-1.
\end{equation}
One easily checks (see e.g.~\cite{LG05}) that $\W_j(\tau) \ge 0$ for every $0 \le j \le n-1$ but $\W_n(\tau)=-1$. Observe that $\W_{j+1}(\tau) - \W_j(\tau) \ge -1$ for every $0 \le j \le n-1$, with equality if and only if $u(j)$ is a leaf of $\tau$. We shall think of such a path as the step function on $[0,n]$ given by $s \mapsto \W_{\lfloor s \rfloor}(\tau)$.

\paragraph{Scaling limits.} Fix $\alpha \in (1, 2]$ and consider a strictly stable spectrally positive L{\'e}vy process of index $\alpha$: $X_\alpha$ is a random process with paths in the set $\D([0, \infty), \R)$ of c{\`a}dl{\`a}g functions endowed with the Skorokhod $J_{1}$ topology (see e.g. Billingsley \cite{Bil99} for details) which has independent and stationary increments, no negative jump and such that
$\Es{\exp(-\lambda X_\alpha(t))} = \exp(t \lambda^\alpha) $ for every $t, \lambda > 0$. Using excursion theory, it is then possible to define $\X$, the normalized excursion of $X_\alpha$, which is a random variable with values in $\D([0, 1], \R)$, such that $\X(0) = \X(1) = 0$ and, almost surely, $\X(t) > 0$ for every $t \in (0,1)$. We do not enter into details and refer to Bertoin \cite{Ber96} for background.

An important point is that $\X$ is continuous for $\alpha=2$, and indeed $X^{\rm ex}_2 / \sqrt{2}$ is the standard Brownian excursion, whereas the set of discontinuities of $\X$ is dense in $[0,1]$ for every $\alpha \in (1,2)$.

Duquesne \cite{Du03}  (see also \cite{K11}) provides the following limit theorem which is the steppingstone of our convergence results. Let $\alpha \in (1,2]$ and $\mu$ a critical probability measure on $\Z_+$ in the domain of attraction of a stable law of index $\alpha$. For every $n \ge 1$ for which $\mathrm{BGW}^\mu_n$ is well defined, sample $\CRT_n$ according to $\mathrm{BGW}_n^\mu$. Then there exists a sequence $(B_n)_{n \ge 1}$ of positive constants satisfying $\lim_{n \to \infty} B_n = \infty$, such that the convergence \begin{equation}\label{eq:Duq}
\left( \frac{1}{B_n} \W_{\lfloor ns \rfloor}(\CRT_n) : s \in [0,1]\right)  \quad \mathop{\longrightarrow}^{(d)}_{n \rightarrow \infty} \quad 
(\X(s) : s \in [0,1])
\end{equation}
holds in distribution in the space $\D([0, 1], \R)$.

The sequence $(B_{n})$ is regularly varying with index $1/\alpha$, meaning that  if $(u_{n})_{n \geq 1}$ and $(v_{n})_{n \geq 1}$ are two sequences of integers tending to $\infty$ and such that $u_{n}/v_{n} \rightarrow s >0$, then $B_{u_{n}}/B_{v_{n}} \rightarrow s^{1/\alpha}$ as $n \rightarrow \infty$, and may be chosen to be increasing (see e.g.~\cite[Theorem 1.10]{Kor12}, which also gives the dependence of $B_{n}$ in terms of $\mu$). When $\mu$ has finite positive variance $\sigma^{2}$, one can take $B_{n}=\sigma \sqrt{n/2}$.

\subsection{Noncrossing trees}

Let $\tau \in \T_{n}$ be a plane tree with $n$ vertices with its vertices $\varnothing = u(0) \prec u(1) \prec \dots \prec u(n-1)$ listed in lexicographical order. We set 
$$\mathbb{C}(\tau)= \{(l_{1},l_{2}, \ldots, l_{n-1}): 0 \leq l_{j} \leq k_{u(j)}(\tau) \textrm{ for every } 1 \leq j \leq n-1 \}$$
and
$$\T_{n}^{\mathsf{dec}}= \{ (\tau, \mathbf{c}) : \tau \in \T_{n} \textrm{ and } \mathbf{c} \in \mathbb{C}(\tau)\}.$$
Elements of $\T_{n}^{\mathsf{dec}}$ are called decorated trees, and we can view $l_{j}$ as the label carried by the vertex $u(j)$. Note that $\#\mathbb{C}(\tau) = \prod_{u \in \tau \setminus \{\varnothing\} }(k_{u}(\tau)+1)$ for every $\tau \in \mathbb{T}$.

If $\theta$ is a noncrossing tree, we let $S(\theta)$ be its shape, which is the plane tree associated with $\theta$ and rooted at the vertex corresponding to the complex number $1$ (see Fig.~\ref{fig:arbres} for an example). If $\theta$ is a noncrossing tree with $n$ vertices and $\varnothing = u(0) \prec u(1) \prec \dots \prec u(n-1)$ are the vertices of its shape listed in lexicographical order, for every $1 \leq i \leq n-1$, we let ${L}_{i}(\theta)$ be the number of children of $u(i)$ lying to the ``left'' of $u(i)$ (that is lying in the left half-plane formed by the line joining $u(i)$ with  the complex number $1$), and set
$${C}(\theta)=({L}_{1}(\theta),{L}_{2}(\theta), \ldots,{L}_{n-1}(\theta))  \quad  \in  \quad \mathbb{C}(\theta).$$

The following result is a reformulation of the ``left-right'' coding of noncrossing trees in \cite{PP02}.

\begin{prop}\label{prop:bij}For every $n \geq 1$, the mapping
\begin{align*}
{\Phi}_{n}: \quad  \NC_{n}& \quad \longrightarrow \quad  \T_{n}^{\mathsf{dec}}\\
\theta & \quad  \longmapsto  \quad ( S(\theta),{C}(\theta))
\end{align*}
is a bijection.
\end{prop}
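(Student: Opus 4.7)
The plan is to construct an explicit inverse $\Psi_n : \T_n^{\mathsf{dec}} \to \NC_n$ and verify that $\Phi_n$ and $\Psi_n$ are mutually inverse. The well-definedness of $\Phi_n$ is immediate: for any $\theta \in \NC_n$, each count $L_i(\theta)$ lies in $\{0, 1, \ldots, k_{u(i)}(S(\theta))\}$, so $C(\theta) \in \mathbb{C}(S(\theta))$.

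To define $\Psi_n$, given $(\tau, \mathbf{c}) \in \T_n^{\mathsf{dec}}$, I would first split the children of each non-root vertex $u(i)$ of $\tau$, taken in plane tree order, into the first $l_i$ ``left children'' and the remaining $k_{u(i)} - l_i$ ``right children'', declaring all children of the root to be ``right children'' by convention. I would then define a position $\sigma(i) \in \{0, 1, \ldots, n-1\}$ for each vertex $u(i)$ by a modified depth-first traversal: start at the root, assign it position $0$, and recurse into the subtrees of its children in plane tree order, placing them in successive clockwise arcs; at each non-root vertex $v$, first recursively handle the subtrees of its left children in plane tree order, then assign $v$ the next available position, then recursively handle the subtrees of its right children. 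Placing each $u(i)$ at the $n$-th root of unity $\mathrm{e}^{-2\mathrm{i}\pi\sigma(i)/n}$ and joining the edges of $\tau$ by straight chords yields $\Psi_n(\tau, \mathbf{c})$.

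Two verifications are then needed. First, $\Psi_n(\tau, \mathbf{c})$ must indeed be a noncrossing tree: by induction on $n$, the traversal places the vertices of each subtree in a contiguous arc on the circle, so no two chords can cross. Second, the compositions $\Phi_n \circ \Psi_n$ and $\Psi_n \circ \Phi_n$ must be the identity: by construction, the shape of $\Psi_n(\tau, \mathbf{c})$ is $\tau$ (the clockwise order of children at each vertex of the noncrossing embedding matches the plane tree order), and the number of left children at each vertex equals $l_i$.

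The main obstacle I anticipate is in the second verification, specifically identifying the global definition of ``left'' (via the line joining $u(i)$ to the complex number $1$) with the local one (via the chord $[u(i), \mathrm{parent}(u(i))]$) that drives the recursion. The key geometric fact is that, in a noncrossing tree, the tree path from $u(i)$ to the root together with the virtual chord $[u(i), 1]$ bounds a region of the disk that no descendant of $u(i)$ can enter without violating planarity; consequently, the two partitions of the children of $u(i)$ induced by these two lines coincide. Once this consistency is established, the bijection reduces to straightforward combinatorial bookkeeping.
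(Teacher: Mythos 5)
Your proof is correct and takes essentially the same approach as the paper, which likewise establishes the proposition by constructing an explicit inverse $\Phi_n^{-1}$: the paper writes down a closed-form position for $u(p)$ (its lexicographic rank $p$, corrected by the number of ancestors folded to the other side of $u(p)$ and by the sizes of the subtrees hanging on its first $l_p$ children), and this is exactly what your modified depth-first traversal unrolls to. The ``global vs.\ local left'' consistency you flag is a genuine point, but it is the same verification the paper dispatches with ``it is then a simple matter to check that $\Phi_n\circ\Phi_n^{-1}$ and $\Phi_n^{-1}\circ\Phi_n$ are the identity.''
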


\begin{proof}We describe the reverse map $\Phi_{n}^{-1}$; this will also be useful later. Fix $(\tau,(l_{1},l_{2}, \ldots, l_{n-1})) \in \T_{n}^{\mathsf{dec}}$.  Let $\varnothing = u(0) \prec u(1) \prec \dots \prec u(n-1)$ be the vertices of $\tau$ labelled in lexicographical order. To simplify notation, for every $u \in \tau$ with $u \neq \varnothing$, we set $n(u)=k$ if $u$ is the $k$-th child of its parent and we let $l(u)$ be the label carried by $u$, that is $l(u)=l_{j}$ if $u=u(j)$. Then, for every $u \in \tau$,  set
$$L(u)= \# \left\{v \in \rrbracket \varnothing, u \rrbracket : |v| \ge 2 \textrm{ and } n(v) \leq l(pr(v))   \right\}, \qquad R(u)=|u|-L(u)-1,$$
where we recall that $pr(v)$ is the parent of $v$. 
Intuitively speaking, $L(u)$ and $R(u)$ represent  the number of vertices of $\rrbracket \varnothing, u \rrbracket$ that will be respectively folded to the left and to the right of $u$ in the associated noncrossing tree which is defined as follows.

First map $\varnothing$ to the complex number $1$. Then, for every $ 1 \leq p \leq n-1$, let $k_{p}$ be the number of children of $u(p)$. If $k_{p}=0$,  map $u(p)$ to $\e^{-2 \i \pi \cdot (p-R(u(p)))/n}$. Otherwise, for $1 \leq i \leq k_{p}$, let
$T_{i}$ be the size of the subtree grafted on the $i$-th child of $u(p)$ (so that $T_{i}$ is the number of its non strict descendants) with the convention $T_{0}=0$. Then map $u(p)$ to $\e^{-2 \i \pi \cdot (p-R(p)+ T_{1}+T_{2}+\cdots+T_{l_{p}})/n}$. It is then a simple matter to check that $\Phi_{n} \circ \Phi_{n}^{-1}$ and $\Phi_{n}^{-1} \circ \Phi_{n}$ are the identity, which completes the proof. \end{proof}

In Section \ref{sec:inv}, we give sufficient conditions on a sequence $(\tau_{n}^{\mathrm{dec}})_{n \geq 1}$ of decorated trees which ensure that the associated noncrossing trees $\Phi_{n}^{-1}(\tau_{n}^{\mathrm{dec}})$ converge to triangulated laminations, which form a family of compact subsets of the unit disk which we now define.

\section{Triangulations, laminations and triangulated laminations}

We denote by $\Db= \{z \in \C : |z| \leq 1\} $ the closed unit disk. A geodesic lamination of $\Db$ is a closed subset of $\Db$ which can be written as the union of a collection of noncrossing chords. In the sequel, by lamination we will always mean geodesic lamination of $\Db$. A lamination is said to be maximal when it is maximal for the inclusion relation among laminations. We call faces of a lamination the connected components of its complement in $\overline{\D}$; note that the faces of a maximal lamination are open triangles whose vertices belong to $\S$, a maximal lamination is also called a triangulation.

\subsection{Triangulations coded by continuous functions}
\label{sec:trig}

Let $f:[0,1] \rightarrow \R_{+}$ be a continuous function with $f(0)=f(1)=0$ and such that the following assumption \eqref{H_f} holds:

\begin{equation}\tag{$H_{f}$}\label{H_f}
\text{The local minima of } f \text{ are distinct}.
\end{equation}
This means that if $0 \leq a<b<c<d \leq 1$ are such that the infimum of $f$ over $]a,b[$ is attained at a point of $]a,b[$, and that over $]c,d[$ is attained at a point of $]c,d[$ as well, then $\min_{]a,b[}f \neq \min_{]c,d[} f$.

We define an equivalence relation on $[0,1]$ by $s \overset{f}{\thicksim} t$ whenever $f(s) = f(t) = \min_{[s \wedge t, s \vee t]} f$. We then define a subset of $\overline{\D}$ by
\begin{equation}\label{eq:intro_def_triangulation_brownienne}
\mathbf{L}(f) \quad \coloneqq \quad \bigcup_{s \overset{f}{\thicksim} t} \left[\e^{-2\i\pi s}, \e^{-2\i\pi t}\right].
\end{equation}
Using the fact that $f$ is continuous and its local minima are distinct, one can prove (see e.g. \cite[Prop. 2.1]{LGP08}) that $\mathbf{L}(f)$ is a geodesic lamination of $\overline{\D}$. Furthermore, it is maximal for the inclusion relation among geodesic laminations of $\overline{\D}$.  For this reason, we say that $\mathbf{L}(f)$ is the \emph{triangulation coded by $f$}. 

Now let $\mathbbm{e} =X^{\rm ex}_2$ be $\sqrt{2}$ times the standard Brownian excursion. Since $\mathbbm{e}$ has almost surely distinct local minima, the lamination $\mathbf{L}(\mathbbm{e})$ is maximal, it is called the Brownian triangulation and is also denoted by $\mathbf{L}_{2}$. This set has been introduced by Aldous \cite{Ald94b}.

\subsection{Laminations coded by càdlàg functions}
\label{sec:lam}

Recall that  ${\D}([0, 1], \R)$  is the space of real-valued càdlàg functions on $[0,1]$ equipped with the Skorokhod $J_{1}$ topology. If $X \in {\D}([0, 1], \R)$, we set $\Delta X(t)=X(t)-X(t-)$ for $t \geq 0$, with the convention $X(0-)=X(0)$. We fix a function $Z \in {\D}([0, 1], \R)$ such that $Z(0)=Z(1) = 0$, $Z(t) > 0$ and $\Delta Z(t) \ge 0$ for every $t \in (0,1)$, and satisfying the following four properties:
\begin{enumerate}[label=\color{blue}(H\arabic*)]
\item\label{H1}
For every $0 \le s < t \le 1$, there exists at most one value $r \in (s, t)$ such that $Z(r) = \inf_{[s, t]} Z$.
\item\label{H2}
For every $t \in (0,1)$ such that $\Delta Z(t) > 0$, we have $\inf_{[t, t+\varepsilon]} Z < Z(t)$ for every $0 < \varepsilon \le 1-t$;
\item\label{H3}
For every $t \in (0,1)$ such that $\Delta Z(t) > 0$, we have $\inf_{[t-\varepsilon, t]} Z < Z(t-)$ for every $0 < \varepsilon \le t$;
\item\label{H4}
For every $t \in (0,1)$ such that $Z$ attains a local minimum at $t$ (which implies $\Delta Z(t) = 0$), if $s = \sup\{ u \in [0,t] : Z(u) < Z(t)\}$, then $\Delta Z(s) > 0$ and $Z(s-) < Z(t) < Z(s)$.
\end{enumerate}
We recall the construction in \cite{Kor11} of a lamination $L(Z)$ from $Z$. To this end, we define a relation (not equivalence relation in general) on $[0, 1]$ as follows: for every $0 \le s < t \le 1$, we set
\begin{equation}
s \simeq^Z t \qquad\text{if}\qquad t = \inf \left\{u > s : Z(u) \le Z(s-) \right\},
\end{equation}
then for $0 \le t < s \le 1$, we set $s \simeq^Z t$ if $t \simeq^Z s$, and we agree that $s \simeq^Z s$ for every $s \in [0,1]$. We finally define a subset of $\overline{\D}$ by
\begin{equation}\label{eq:lamination_stable}
L(Z) \quad \coloneqq \quad \bigcup_{s \simeq^Z t} \left[\e^{-2\i\pi s}, \e^{-2\i\pi t}\right].
\end{equation}
Using the four above properties, it is proved in \cite[Prop. 2.9]{Kor11} that $L(Z)$ is a geodesic lamination of $\overline{\D}$, called the lamination coded by $Z$.

Recall that $\X$ denotes the normalized excursion of a spectrally positive strictly stable Lévy process for $ \alpha \in (1,2]$. For every $\alpha \in (1, 2)$, $\X$ fulfills the above properties with probability one (\cite[Proposition~2.10]{Kor11}), we can therefore set
\begin{equation}
\mathbf{L}_\alpha \quad  \coloneqq \quad  L(\X),
\end{equation}
which is called the stable lamination of index $\alpha$.

We recall from \cite[Proposition 3.10]{Kor11} the description of the faces of $L(Z)$ (this reference actually only covers the case where $Z=\X$, but the arguments carry out in this setting as well), which are the connected components of the complement of $L(Z)$ in $\D$.
The faces of $L(Z)$ are in one-to-one correspondence with the jump times of $Z$ (observe that the latter set is countable since $Z$ is c{\`a}dl{\`a}g). For every $s,t \in (0,1)$, let $\mathbb{H}(s,t)$ be the open half-plane bounded by the line containing $\e^{-2\i\pi s}$ and $\e^{-2\i\pi t}$, which does not contain the complex number $1$.
Then for every jump time $s$ of $Z$, letting $t = \inf\{u > s : Z(u) = Z(s-)\}$, the face $V_{s}$ of $L(Z)$ associated with $s$ is the unique one contained in $\mathbb{H}(s,t)$ whose boundary contains the chord $[\e^{-2\i\pi s}, \e^{-2\i\pi t}]$.
Moreover, the ``boundary'' of the face $V_{s}$ which belongs to $\S$ is given by
\begin{equation}\label{eq:sommets_face}
B_{s} \quad \coloneqq \quad \overline{V} \cap \S =  \left\{r \in [s,t] : Z(r) = \inf_{[s,r]}Z \right\},
\end{equation}
where we identify the interval $[0, 1)$ with the circle $\S$ via the mapping $t \mapsto \e^{-2\i\pi t}$ to ease notation.

\subsection{Triangulated laminations}
\label{sec:trilam}

We next define triangulations which are, informally, obtained from $L(Z)$ by ``triangulating'' all its faces, i.e.~for each face of $L(Z)$ we choose a special vertex on its boundary on $\S$ and join it to all the other vertices of this face by chords.

Fix $Z \in \D([0,1],\R)$ satisfying  \ref{H1}, \ref{H2}, \ref{H3}, \ref{H4}. Let $J(Z)= \{u \in [0,1] : \Delta Z(u)>0\}$  be the set of all jump times of $Z$, and let $\boldsymbol{\ell}=( \ell_{u}; u \in J(Z))$ be a sequence of nonnegative real numbers indexed by these jump times such that $ 0 \leq \ell_{u} \leq 1$ for every $u \in J(Z)$. By convention, we shall always assume that $\ell_{u}=0$ if $u \not \in J(Z)$. The sequence $\bell$ will be called a \emph{jumps labelling}.

For every $u \in J(Z)$, set \begin{equation}
\label{eq:pu}p_{u}(\boldsymbol{\ell})= \inf  \left\{r \geq u : Z_{r}=Z_{u} - \Delta Z (u) \cdot \ell_{u} \right\} \qquad \textrm{and} \qquad C_{u}(\boldsymbol{\ell})= \bigcup_{r \in B_{u}} \left[\e^{-2\i\pi p_{u}(\boldsymbol{\ell})}, \e^{-2\i\pi r}\right],
\end{equation}
where we recall that $B_{u}$ is defined by \eqref{eq:sommets_face}. Note that $p_{u}(\bell) \in B_{u}$ for every $u \in J(Z)$.
Finally define
\begin{equation}
\label{eq:triangulation_stable_canonique}L(Z,\boldsymbol{\ell}) \quad \coloneqq \quad   L(Z) \cup \bigcup_{s \in J(Z)}  C_{s}(\boldsymbol{\ell}).
\end{equation}
Intuitively speaking, $L(Z,\boldsymbol{\ell})$ is obtained from $L(Z)$ by triangulating each face as follows: inside every face  $V_{s}$ of  $L(Z)$ indexed by a jump time $s$, choose a special vertex on its boundary $B_{s}$  indexed by $p_{s}(\bell)$, and draw chords from this special vertex to  all the other points of $B_{s}$. The point is that the latter set is uncountable, so some care is needed to define the special vertex, hence the purpose of the jumps labelling $\bell$. Roughly speaking, $ x \in [0,1] \mapsto  \inf  \left\{u \geq s : Z_{u}=Z_{s} - \Delta Z (s) \cdot x \right\}$ plays the role of the inverse of the local time of vertices of $B_{s}$ (that is a measurement of the evolution of ``number'' of vertices of $B_{s}$ as one goes around $\S$) and allows to identify $[0,1]$ with $B_{s}$.

\begin{prop}\label{prop:triangulation_stable_canonique}
Under the assumptions \ref{H1}, \ref{H2}, \ref{H3}, \ref{H4}, for every jumps labelling $\bell$, the set $L(Z, \bell)$ is a triangulation of $\overline{\D}$.
\end{prop}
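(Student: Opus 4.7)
The plan is to verify the two properties defining a triangulation of $\overline{\D}$: that $L(Z, \bell)$ is a geodesic lamination, and that it is maximal, so that its complement decomposes into open triangles with vertices on $\S$.

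First, I would check the lamination property. The non-crossing condition splits into three cases: chords belonging to $C_s(\bell)$ and $C_{s'}(\bell)$ with $s \ne s'$ lie in the closures of the disjoint faces $V_s$ and $V_{s'}$ of $L(Z)$ and hence cannot cross; all chords within a single $C_s(\bell)$ share the common endpoint $\e^{-2\i\pi p_s(\bell)}$, so they meet only at that point; and the chords of $C_s(\bell)$ lie in $\overline{V_s}$, so they do not cross any chord of $L(Z)$ beyond the boundary $\partial V_s$. For closedness, each $C_s(\bell)$ is a closed set (it is the continuous image of the compact set $\overline{B_s} \times [0,1]$ into $\overline{\D}$ under $(r,\lambda) \mapsto \lambda \e^{-2\i\pi p_s(\bell)} + (1-\lambda)\e^{-2\i\pi r}$), and a limiting argument analogous to that of \cite[Prop.~2.9]{Kor11} handles sequences of chords drawn from infinitely many distinct $C_{s_n}(\bell)$: the containing faces $V_{s_n}$ must then have vanishing diameters, so any Hausdorff limit of such chords lies on $\S$ and actually in $L(Z)$ thanks to \ref{H2}--\ref{H3}.

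Next I would prove maximality by analyzing each face $V_s$ of $L(Z)$ separately. With $t = \inf\{u > s : Z(u) = Z(s-)\}$, the face $V_s$ is bounded by the main chord $[\e^{-2\i\pi s}, \e^{-2\i\pi t}]$ and by the sub-jump chords $[\e^{-2\i\pi u}, \e^{-2\i\pi \tau_u}]$ for $u \in J(Z) \cap (s,t)$ with $\tau_u = \inf\{v > u : Z(v) = Z(u-)\} \le t$. Inside $V_s$, the set $C_s(\bell)$ is a fan of chords emanating from the apex $\e^{-2\i\pi p_s(\bell)} \in B_s$. I would then argue that $V_s \setminus C_s(\bell)$ is the disjoint union of an outer triangle of vertices $\e^{-2\i\pi p_s(\bell)}, \e^{-2\i\pi s}, \e^{-2\i\pi t}$ (when $p_s(\bell) \notin \{s,t\}$) together with, for each sub-jump $u$, a triangle of vertices $\e^{-2\i\pi p_s(\bell)}, \e^{-2\i\pi u}, \e^{-2\i\pi \tau_u}$ bounded by the sub-jump chord and the two fan chords joining its endpoints to the apex. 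Any non-trivial arc of $\S$ contained in $B_s$ (arising when $Z$ attains a new running minimum continuously on an interval) gives rise to a pie slice filled entirely by fan chords; this slice lies inside $L(Z, \bell)$ and contributes no additional face.

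The main obstacle is proving that this collection of triangles exhausts $V_s \setminus C_s(\bell)$, i.e.\ that no point of $V_s$ escapes the fan and the enumerated triangles. This rests on a canonical decomposition of the arc $\{\e^{-2\i\pi r} : r \in [s,t]\}$ into the projections of the sub-jump chords and the complementary pieces (either single points or arcs contained in $B_s$). Hypotheses \ref{H1} and \ref{H4} are essential here: \ref{H1} forces the sub-jumps in $(s,t)$ to be pairwise either nested or disjoint and ensures the running-minimum structure is unambiguous, while \ref{H4} rules out pathological local minima, so that every accessible boundary point of $V_s$ on $\S$ either lies on a sub-jump chord or belongs to $B_s$. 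Once this decomposition is in hand, a direct geometric argument inside each face yields the asserted partition into open triangles, completing the proof that $L(Z, \bell)$ is a triangulation of $\overline{\D}$.
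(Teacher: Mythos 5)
Your plan for the lamination and closedness parts is sound, though it takes a more geometric route than the paper. Where the paper shows noncrossing by a direct four-case analysis on the function $Z$ (deriving contradictions with \ref{H1} and the defining relation $\simeq^Z$), you argue via convexity of the closed faces $\overline{V_s}$: chords in $C_s(\bell)$ stay inside $\overline{V_s}$, chords in a single fan share the apex, and chords from different fans are separated by the disjointness of the faces. This is a legitimate alternative and arguably cleaner, provided one is willing to cite the convex geometry of faces of laminations from \cite[Prop.~3.10]{Kor11}. For closedness the paper's argument is actually simpler than yours: if $x_n \to x$ and $x \notin L(Z)$, then $x$ lies in a single open face $V$, so eventually all $x_n$ lie in $V$, and compactness of $B_s$ finishes; there is no need to control sequences drawn from infinitely many distinct fans. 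Your extra claim that faces $V_{s_n}$ must have vanishing diameter when $s_n$ runs over infinitely many jump times is plausible but would itself require a short argument.

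The genuine gap is in maximality. The paper proves maximality by contradiction: given a putative extra chord $(\e^{-2\i\pi s'}, \e^{-2\i\pi t'})$ missing $L(Z,\bell)$, it locates it inside some face $V_u$, shows (using \ref{H1} and \ref{H3}) that $Z(s') \neq Z(t')$, and then exhibits explicit points $x \in (s',t') \cap B_u$ and $y \in B_u \cap ((u,s')\cup(t',v))$ whose fan chords must cross the putative extra chord. This argument is short and complete. You instead try to prove the stronger statement that $V_s \setminus C_s(\bell)$ is an explicit disjoint union of triangles indexed by the sub-jump chords and the outer main chord, and you write yourself that the main obstacle is showing this collection exhausts the face — but that step is exactly what you do not carry out. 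Establishing it would require a precise decomposition $[s,t] = B_s \sqcup \bigsqcup_u (\text{sub-jump interval})$, a verification that the maximal sub-jump intervals tile the complement of $B_s$, and that \ref{H1} rules out both accumulation pathologies and flat stretches in $B_s$; none of this is done. As written, your maximality argument asserts the conclusion rather than proving it, so the proof is incomplete at its most delicate point.
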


\begin{proof}
First note that the chords defining $L(Z,\boldsymbol{\ell})$ in \eqref{eq:triangulation_stable_canonique} are noncrossing: there exists no $4$-tuple $0 \le s < s' < t < t' \le 1$ such that both chords $[\e^{-2\i\pi s}, \e^{-2\i\pi t}]$ and $[\e^{-2\i\pi s'}, \e^{-2\i\pi t'}]$ belong to $L(Z,\boldsymbol{\ell})$. Indeed, suppose there exists such a $4$-tuple. Clearly, we cannot have $[\e^{-2\i\pi s}, \e^{-2\i\pi t}] \subset C_{u}(\boldsymbol{\ell})$ and $[\e^{-2\i\pi s'}, \e^{-2\i\pi t'}] \subset C_{u}(\boldsymbol{\ell})$ for any $u \in J(Z)$ and neither do we have $[\e^{-2\i\pi s}, \e^{-2\i\pi t}] \subset L(Z)$ and $[\e^{-2\i\pi s'}, \e^{-2\i\pi t'}] \subset L(Z)$ since $L(Z)$ is a lamination.

Assume next that $[\e^{-2\i\pi s}, \e^{-2\i\pi t}] \subset C_{u}(\boldsymbol{\ell})$ for a certain $u \in J(Z)$ and $[\e^{-2\i\pi s'}, \e^{-2\i\pi t'}] \subset L(Z)$; then $s,t \in B_u$ so $u \le s < s' < t < t'$ and $Z(t) = \inf_{[u,t]} Z$. It follows that $Z(t) \le Z(s'-)$ which contradicts $t' = \inf\{r > s' : Z(r) \le Z(s'-)\}$. The case $[\e^{-2\i\pi s'}, \e^{-2\i\pi t'}] \subset C_{u}(\boldsymbol{\ell})$ for a certain $u \in J(Z)$ and $[\e^{-2\i\pi s}, \e^{-2\i\pi t}] \subset L(Z)$ yields a similar contradiction.

The last case to consider is $[\e^{-2\i\pi s}, \e^{-2\i\pi t}] \subset C_{u}(\boldsymbol{\ell})$ for a certain $u \in J(Z)$ and $[\e^{-2\i\pi s'}, \e^{-2\i\pi t'}] \subset C_{u'}(\boldsymbol{\ell})$ for a certain $u' \in J(Z)$ with $u' \ne u$. Let $v = \inf\{r > u : Z(r) = Z(u-)\}$ and $v' = \inf\{r > u' : Z(r) = Z(u'-)\}$; then $u \le s < t \le v$ and $u' \le s' < t' \le v'$. If $u' < u$, then $u' < u \le s < s' < t$; with the same reasoning as above, we conclude that $\Delta Z(u) = \Delta Z(s') = 0$ and $Z(u) = Z(s') = Z(t) = \inf_{[u',t]} Z$ which contradicts \ref{H1}. Similarly, if $u' > u$, then $u < u' \le s' < t < t' \le v' < v$ and we conclude that $\Delta Z(u') = \Delta Z(t) = 0$ and $Z(u') = Z(t) = Z(t') = \inf_{[u,t']} Z$.

Next, we need to show that $L(Z,\boldsymbol{\ell})$ is closed. Consider a sequence of points of the plane $(x_n)$ on $L(Z,\boldsymbol{\ell})$ which converges as $n \to \infty$ to $x \in \overline{\D}$. Let us show that $x \in L(Z,\boldsymbol{\ell})$. If $x isn L(Z)$, then there exists a face $V$ of the latter such that $x \in V$ and, moreover, $x_n \in V$ for every $n$ large enough. Note that if $u$ is the jump time of $Z$ associated with $V$, then $V \cap L(Z,\boldsymbol{\ell}) = \bigcup_{t \in B_{u}} ]\e^{-2\i\pi p_{u}(\bell)}, \e^{-2\i\pi t}[$. Thus, for every $n$ large enough, $x_n$ belongs to a chord $[\e^{-2\i\pi p_{u}(\bell)}, \e^{-2\i\pi t_n}]$, where $t_n \in B_u$. Since $B_u$ is compact, upon extracting a subsequence, we may, and do, suppose that $t_n$ converges to a certain $t \in B_u$ as $n \to \infty$ and we conclude that $x \in [\e^{-2\i\pi p_{u}(\bell)}, \e^{-2\i\pi t}]$.

Finally, we show that $L(Z,\boldsymbol{\ell})$ is a maximal lamination. We argue by contradiction that for every $a, b \in \S$ with $a \ne b$, the open chord $(a, b) \coloneqq [a, b] \setminus\{a, b\}$ must intersect $L(Z,\boldsymbol{\ell})$, otherwise $L(Z,\boldsymbol{\ell}) \cup [a, b]$ would be a bigger lamination. Fix $0 \le s < t \le 1$ and suppose that $(\e^{-2\i\pi s}, \e^{-2\i\pi t}) \cap L(Z,\boldsymbol{\ell}) = \varnothing$. Then $(\e^{-2\i\pi s}, \e^{-2\i\pi t})$ belongs to a face $V_u$ for a certain $u \in J(Z)$. As a consequence $s,t \in B_u$,  so that, setting $v = \inf\{r > u : Z(r) = Z(u-)\}$, we have $s,t \in [u,v]$, $Z(s) = \inf_{[u,s]}Z$ and $Z(t) = \inf_{[u,t]}Z$. We claim that $Z(s) \ne Z(t)$ and so $Z(s) > Z(t)$. Indeed suppose $Z(s) = Z(t)$ and observe that $Z$ is continuous at $s$ by \ref{H3}; either $Z(r) > Z(s)$ for every $r \in (s, t)$ and then $[\e^{-2\i\pi s}, \e^{-2\i\pi t}] \subset L(Z)$, or there exists $r \in (s, t)$ such that $Z(s) = Z(r) = Z(t)$, which contradicts \ref{H1}. Let $x = \inf\{r > u : Z(r) \le (Z(s)+Z(t))/2\}$, then $x \in (s,t) \cap B_u$. Finally, note that $(s,t) \ne (u,v)$ so, similarly, there exists $y \in B_u \cap ((u,s) \cup (t,v))$. Since $p_u(\bell) \in B_u \setminus \{s,t\}$, we conclude that one of the open chords $(\e^{-2\i\pi p_u(\bell)}, \e^{-2\i\pi x})$ or $(\e^{-2\i\pi p_u(\bell)}, \e^{-2\i\pi y})$ intersects $(\e^{-2\i\pi s}, \e^{-2\i\pi t})$.
\end{proof}

As a consequence, note that $C_{u}(\boldsymbol{\ell})$ is compact for every $u \in L(Z)$.

\begin{rem}\label{rem:Lchapeau}For $\alpha \in (1,2)$, the triangulation $\widehat{\mathbf{L}}_{\alpha}$ introduced in \cite{Mar15} is a particular case of a triangulated lamination. Indeed, we have $\widehat{\mathbf{L}}_{\alpha}=L(\X,\boldsymbol{\ell}) $ with $\ell_{s}=0$ for every $s$. In other words, $\widehat{\mathbf{L}}_{\alpha}$ is obtained from the stable lamination $\mathbf{L}_{\alpha}$ by drawing chords from the ``leftmost'' vertex of a face to all the other vertices of this face.
\end{rem}

An interesting example  of a triangulated lamination is the so-called uniform $\alpha$-stable triangulation, which is defined as follows. For $\alpha \in (1,2)$, conditionally given $\X$, let  $\bell^{U}=(\ell_{s})_{s \in J(\X)}$ be a sequence of i.i.d.~uniform random variables on $[0,1]$. The uniform stable triangulation $ \mathbf{L}^{\rm U}_\alpha$ is then defined to be $ \mathbf{L}^{\rm U}_\alpha \coloneqq L(\X, \bell^{U})$. We will see that  $L(\X, \bell^{U})$ is the distributional limit of certain simply generated noncrossing trees as well as large critical Bienaymé--Galton--Watson trees in the domain of attraction of a stable law of index $\alpha$ which are uniformly embedded  in a noncrossing way.

\begin{rem}\label{rem:codage}
If $f : [0,1] \to \R_+$ is a continuous function such that $f(0)=f(1)=0$ but which does not fulfill \eqref{H_f}, one can still adapt the construction of $\mathbf{L}(f)$ is Section \ref{sec:trig} to define a (non-maximal) lamination from $f$, see Curien \& Le Gall \cite[Prop. 2.5]{CLGrecursive}. As shown in \cite{Kor11}, the stable laminations $\mathbf{L}_{\alpha}$ can be coded by $H^{\rm ex}_\alpha$,  the normalized excursion of the so-called height process associated with $\X$. In the same way, the sets $L(\X,\boldsymbol{\ell})$ could also be defined from $H^{\rm ex}_\alpha$ (although in a different sense than that of Curien \& Le Gall since it would involve $\bell$). Nonetheless, $H^{\rm ex}_\alpha$ is a more complicated object than $\X$, the definition of $p_u$ and the invariance principles of Section \ref{sec:inv} would be more technical and may even require more assumptions (see Remark \ref{rem:contour} below). 

Conversely, if $L$ is a maximal lamination, by adapting the argument of \cite[Prop.~2.2]{LGP08} and using \cite[Cor.~1.2]{Duq06}, we believe that there exists a continuous function $f : [0,1] \to \R_+$ with $ f(0)=f(1)=0$ satisfying \eqref{H_f} such that $L=\mathbf{L}(f)$. However, if $L$ is the lamination
$$L= \left[ 1,\e^{- \i \pi/2 } \right] \cup \left[ \e^{- \i \pi /2}, -1 \right] \cup \left[ -1 , \e^{ \i \pi/2 } \right] \cup \left[ \e^{ \i \pi /2 } , 1 \right] \cup \left[ -1 ,1 \right],$$
there does not exist a continuous  function $f : [0,1] \to \R_+$ with $ f(0)=f(1)=0$ such that $L=\mathbf{L}(f)$ in the sense of Curien \& Le Gall \cite[Prop. 2.5]{CLGrecursive}, and there does not exist a càdlàg function $Z \in \D([0,1],\R)$ satisfying \ref{H1},   \ref{H2},   \ref{H3},   \ref{H4} such that $L=L(Z)$.  
In the same way, $L(\X,\boldsymbol{\ell})$ cannot be coded by a continuous or a càdlàg function in this manner for $\alpha \in (1,2)$.
\end{rem}

\subsection{The Hausdorff dimension of  triangulated stable laminations}

If $L$ is a lamination, we let $A(L) \subset \S$ denote the set of all end-points of its chords. We denote by $\dim(K)$ the Hausdorff dimension of a subset $K$ of $\C$, and refer to Mattila \cite{Mat95} for background. Recall that $\X$ is the normalized excursion of the $\alpha$-stable Lévy process.

\begin{thm}\label{thm:dim_Hausdorff_triangulation_stable}
For every $\alpha \in (1,2)$ and for every jumps labelling $\bell$, almost surely, \begin{equation}\label{eq:dim_Hausdorff_triangulation_stable}
\dim(A(L(\X, \bell))) = \frac{1}{\alpha}
\qquad\text{and}\qquad
\dim(L(\X, \bell)) = 1 + \frac{1}{\alpha}.
\end{equation}
\end{thm}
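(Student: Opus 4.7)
The plan is to decompose $L(\X, \bell) = L(\X) \cup \bigcup_{s \in J(\X)} C_s(\bell)$ and, correspondingly, the endpoint set as $A(L(\X, \bell)) = A(L(\X)) \cup \bigcup_{s \in J(\X)} \{\e^{-2\i\pi r} : r \in B_s\}$, and then estimate each piece separately. From \cite{Kor11} we already know that $\dim(L(\X)) = 2 - 1/\alpha$ and $\dim(A(L(\X))) = 1 - 1/\alpha$, and for $\alpha \in (1,2)$ these are strictly less than $1 + 1/\alpha$ and $1/\alpha$, respectively. Hence, granting that $\dim(B_s) = 1/\alpha$ and $\dim(C_s(\bell)) = 1 + 1/\alpha$ almost surely for each $s \in J(\X)$, both equalities claimed in the theorem follow from the countable stability of Hausdorff dimension, using that $J(\X)$ is countable almost surely.

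The key step is to compute $\dim(B_s)$. Recall that $B_s = \{r \in [s, t_s] : \X(r) = \inf_{[s,r]} \X\}$ where $t_s = \inf\{u > s : \X(u) \leq \X(s-)\}$. After shifting via $Y_u \coloneqq \X(s+u) - \X(s-)$, the set $B_s - s$ is the set of left-minimum records of $Y$ up to its first passage at $0$. For a free spectrally positive $\alpha$-stable Lévy process, $\alpha$-self-similarity forces the infimum process $-\inf_{[0,\cdot]}$ to be $(1/\alpha)$-self-similar; therefore its right-continuous inverse is an $\alpha$-self-similar subordinator, that is, a $(1/\alpha)$-stable subordinator (of index in $(1/2, 1)$), whose range has Hausdorff dimension $1/\alpha$ almost surely by classical Lévy fluctuation theory (see e.g.~Bertoin \cite{Ber96}). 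The transfer from the free Lévy process to the normalized excursion $\X$, and then to the sub-excursion of $\X$ straddling the jump $s$, is done via a local absolute-continuity / zoom-in argument in the same spirit as the dimension computations in \cite{Kor11}. Note that $\bell$ does not enter here: the set $B_s$ depends only on $\X$.

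Granted $\dim(B_s) = 1/\alpha$ almost surely for every $s \in J(\X)$, the first equality in the theorem follows at once. For the second, each fan $C_s(\bell)$ is the image of $[0,1] \times B_s$ under the Lipschitz map $(t, r) \mapsto (1-t)\e^{-2\i\pi p_s(\bell)} + t \e^{-2\i\pi r}$, giving $\dim(C_s(\bell)) \leq 1 + 1/\alpha$. For the matching lower bound, the restriction of the same map to $[1/2, 1] \times B_s$ is bi-Lipschitz onto its image, and the product inequality $\dim([1/2, 1] \times B_s) \geq 1 + \dim(B_s)$ applies because $B_s$, being the range of a stable subordinator, has equal Hausdorff and packing dimensions. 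The second equality then follows by countable stability of Hausdorff dimension together with $\dim(L(\X)) = 2 - 1/\alpha < 1 + 1/\alpha$.

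The main obstacle is the Lévy-theoretic input identifying $B_s$ with (a piece of) the range of a $(1/\alpha)$-stable subordinator, and justifying the transfer of this identification from the free Lévy process to the conditioned excursion $\X$. Once this is granted, the rest is a routine combination of the Lipschitz image estimate, the product formula for Hausdorff dimension, and countable stability.
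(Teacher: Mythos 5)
Your global plan coincides with the paper's: decompose by faces, reduce by countable stability to the two face-level statements $\dim(B_s)=1/\alpha$ and $\dim(C_s(\bell))=1+1/\alpha$, and reassemble. You handle the face-level statements differently. For $\dim(B_s)=1/\alpha$ the paper cites the proof of Theorem 5.1 of \cite{Kor11} (Proposition 5.3 there, which also supplies $\dim(B_s)=\dimMS(B_s)$, used for the upper bound), whereas you re-derive it via the ladder-time subordinator of the post-jump process, leaving the transfer from the free Lévy process to the normalized excursion as an acknowledged obstacle; that is a legitimate alternative route, though it re-proves an available fact. For $\dim(C_s(\bell))=1+1/\alpha$ the paper adapts Le Gall--Paulin: the lower bound is obtained by sweeping a Frostman measure $\nu$ on $B_s$ along the chords to produce $\Lambda=\int\nu(\mathrm{d}x)\lambda_x$ and verifying the Frostman ball estimate, and the upper bound by covering $B_s$ with at most $\varepsilon_k^{-\beta}$ arcs and bounding the Lebesgue measure of the $\varepsilon_k$-enlargement of $C_s(\bell)$. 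You instead parametrize $C_s(\bell)=F([0,1]\times B_s)$ with $F(t,r)=(1-t)\e^{-2\i\pi p_s(\bell)}+t\e^{-2\i\pi r}$ and invoke product-dimension inequalities; your upper bound via the Lipschitz image is fine, since $\dim(A\times B)$ is at most the packing dimension of $A$ plus $\dim(B)$, and $[0,1]$ has packing dimension $1$.

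The gap is in your lower bound: $F$ restricted to $[1/2,1]\times B_s$ is not bi-Lipschitz onto its image. It is not even injective, since $p_s(\bell)\in B_s$ and $F(t,p_s(\bell))=\e^{-2\i\pi p_s(\bell)}$ for every $t$. Removing this fibre does not help: $p_s(\bell)$ is almost surely not isolated in $B_s$ (which is essentially the closed range of a stable subordinator and hence a perfect set), and for $r\in B_s$ close to $p_s(\bell)$ the chord to $\e^{-2\i\pi r}$ is short, so that $|F(t_1,r)-F(t_2,r)|=|t_1-t_2|\cdot|\e^{-2\i\pi p_s(\bell)}-\e^{-2\i\pi r}|$ is arbitrarily small relative to $|t_1-t_2|$, and the inverse of $F$ cannot be Lipschitz. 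The argument can be repaired by restricting to $[1/2,1]\times(B_s\cap I)$ for an arc $I$ bounded away from $p_s(\bell)$ and checking, via the scaling of the subordinator range, that $\dim(B_s\cap I)=1/\alpha$, but this step is absent from the proposal. A secondary misattribution: the product lower bound $\dim(A\times B)\ge\dim(A)+\dim(B)$ holds unconditionally for Borel sets (take the product of two Frostman measures), so the equality of Hausdorff and packing dimensions of $B_s$ is not the hypothesis your lower bound needs.
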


These results should be compared with \cite[Thm.~5.1]{Kor11}, where these dimensions are calculated for stable laminations:
\begin{equation}\label{eq:dim_Hausdorff_lamination_stable}
\dim(A(L(\X))) = 1 - \frac{1}{\alpha}
\qquad\text{and}\qquad
\dim(L(\X)) = 2 - \frac{1}{\alpha}.
\end{equation}
We mention that \eqref{eq:dim_Hausdorff_lamination_stable} also holds for $\alpha=2$ by results of Aldous \cite{Ald94b} and Le Gall \& Paulin \cite{LGP08} when $L(X^{\rm ex}_2)$ is taken to be the Brownian triangulation.

We mention that Theorem \ref{thm:dim_Hausdorff_triangulation_stable} is established in \cite{Mar15} in the particular case where $\ell_{s}=0$ for every $s$. The general case only requires mild modifications, but we give a full proof for completeness.

\begin{rem}
We see that the dimensions of the sets in \eqref{eq:dim_Hausdorff_triangulation_stable} and \eqref{eq:dim_Hausdorff_lamination_stable} have the same limit as $\alpha \uparrow 2$. Indeed, the stable lamination and actually any triangulated stable lamination converge to the Brownian triangulation in this limit. On the other hand, we also see that
\begin{equation}
 \left( \dim(L(\X)), \dim(L(\X,\bell)) \right)   \quad \mathop{\longrightarrow}_{\alpha \downarrow 1} \quad  (1,2).
\end{equation}
Let us give an intuitive explanation of this fact. Informally, as $\alpha \downarrow 1$, the process $\X$ converges towards the deterministic function $f : [0,1] \to \R$ defined by $f(0) = 0$ and $f(x) = 1-x$ for every $x \in (0,1]$ ($f$ is not c{\`a}dl{\`a}g, but we refer to \cite[Theorem 3.6]{CK13} for a precise statement and proof). If we try then to define $L(f)$ and $L(f,\bell)$ mimicking \eqref{eq:lamination_stable} and \eqref{eq:triangulation_stable_canonique}, we obtain $L(f) = \S$ and $L(f,\bell) = \overline{\D}$.
\end{rem}

\begin{proof}[Proof of Theorem \ref{thm:dim_Hausdorff_triangulation_stable}]
Fix a face $V$ of $L(\X,\bell)$ and let $s$ be the jump-time of $\X$ associated with $V$. Notice from \eqref{eq:triangulation_stable_canonique} that all the chords of $L(\X, \bell)$ which lie in $\overline{V}$ either belong to the boundary $\partial V$ or are of the form $[\e^{-2\i\pi p_s(\bell)}, \e^{-2\i\pi r}]$ for $r \in \overline{V} \cap \S$. To simplify notation, denote by $L_V$ the lamination $L(\X, \bell) \cap \overline{V}$ and by $A_{V}$ the set of all its end-points, so that
\begin{equation}
A_V = \overline{V} \cap \S.
\end{equation}
and $\dim(A_V )=1/\alpha$ by \cite[Theorem 5.1]{Kor11}. As a consequence, since $A(L(\X, \bell)) = \bigcup_V A_V$, where the union runs over the countable set of faces of $L(\X)$, we have
\begin{equation}
\dim(A(L(\X, \bell))) = \sup_{V \text{ face of } L(\X)} \dim(A_V) = \dim(\overline{V} \cap \S) = \frac{1}{\alpha}.
\end{equation}
Similarly, we have
\begin{equation}
\dim(L(\X, \bell)) = \sup_{V \text{ face of } L(\X)} \dim(L_V)
\end{equation}
so it only remains to show that for any given face $V$ of $L(\X)$, we have
\begin{equation}\label{eq:Lv}
\dim(L_V) = 1 + \dim(A_V) = 1 + \frac{1}{\alpha}.
\end{equation}
If $s$ is the jump time associated with $V$, it is actually sufficient to establish \eqref{eq:Lv} with $L_{V}$ replaced by  the compact set $C_s(\bell)$, where we recall that $C_s(\bell)$ is the union of the chords $[\e^{-2\i\pi p_s(\bell)}, z]$ for $z \in A_V$. Indeed as we remarked previously, $L_{V} \setminus C_{s}(\bell) \subset L(\X)$ which, by \eqref{eq:dim_Hausdorff_lamination_stable}, has Hausdorff dimension $2-\frac{1}{\alpha} < 1+\frac{1}{\alpha}$ for every $\alpha\in (1,2)$. We adapt the argument of Le Gall \& Paulin \cite[Proposition 2.3]{LGP08} to show that $\dim(C_{s}(\bell)) =1+\dim(A_V)$.

We first show that $\dim(C_s(\bell)) \ge 1 + \dim(A_V)$. Fix $0 < \gamma < \dim(A_{V})$; thanks to Frostman's lemma \cite[Theorem 8.8]{Mat95}, there exists a non-trivial finite Borel measure $\nu$ supported on $A_{V}$ such that $
\nu(B(x,r)) \le r^\gamma$ for every $x \in \C$ and every $r > 0$, where $B(x,r)$ is the Euclidean ball centered at $x$ and of radius $r$. Next, for every $x \in A_{V}$, denote by $\lambda_x$ the one-dimensional Hausdorff measure on the chord joining $x$ to $\e^{-2\i\pi p_s(\bell)}$. We define a finite Borel measure $\Lambda$ on $\C$, supported on $C_s(\bell)$, by setting for every Borel set $B$
\begin{equation}
\Lambda(B) = \int \nu({\rm d}x) \lambda_x(B).
\end{equation}
Fix $0<R<1$ such that $\Lambda(B(0, R)) > 0$; let $z_0 \in B(0,R) \cap C_s(\bell)$ and then $x_0 \in A_{V}$ such that the chord $[x_0,\e^{-2\i\pi p_s(\bell)}]$ contains $z_0$. Fix $\varepsilon \in (0,1]$; every $x \in A_{V}$ such that the chord $[x,\e^{-2\i\pi p_s(\bell)}]$ intersects the ball $B(z_0, \varepsilon)$ must satisfy $|x-x_0| \le C \varepsilon$, where the constant $C$ only depends on $R$. We conclude that
\begin{equation}
\Lambda(B(z_0, \varepsilon))
= \int_{|x-x_0| \le C \varepsilon} \nu({\rm d}x) \lambda_x(B(z_0, \varepsilon))
\le C' \varepsilon^{1+\gamma},
\end{equation}
where the constant $C'$ does not depend on $\varepsilon$ nor $z_0$. Appealing again to Frostman's lemma, we obtain $\dim(C_s(\bell)) \ge 1 + \gamma$, whence, as $\gamma < \dim( A_{V})$ is arbitrary, $\dim( C_s(\bell)) \ge 1 + \dim( A_{V})$.

It remains to show the converse inequality. We denote respectively by $\dimMI(K)$ and $\dimMS(K)$ the lower and upper Minkowski dimensions of a subset $K$ of $\C$ (see e.g. Mattila \cite[Chapter 5]{Mat95}); recall that for every $K \subset \overline{\D}$, we have $\dim(K) \le \dimMI(K) \le \dimMS(K)$. Observe from the proof of Theorem 5.1 in \cite{Kor11} (in particular, Proposition 5.3 there) that we have $\dim(A_{V}) = \dimMS ( A_{V})$.
Fix $\beta > \dim(A_{V}) = \dimMI(A_{V})$; then there exists a sequence $(\varepsilon_k; k \ge 1)$ decreasing to $0$ such that for every $k \ge 1$, there exists a positive integer $M(\varepsilon_k) \le \varepsilon_k^{-\beta}$ and $M(\varepsilon_k)$ disjoint subarcs of $\S$ with length less than $\varepsilon_k$ and which cover $ A_{V}$.
It follows that the two-dimensional Lebesgue measure of the $\varepsilon_k$-enlargement of $C_s(\bell)$ is bounded above by $C \varepsilon_k^{1-\beta}$, where the constant $C$ does not depend on $k$. We conclude from \cite[page 79]{Mat95} that $\dim(C_s(\bell)) \le \dimMS(C_s(\bell)) \le 1 + \beta$ for every $\beta > \dim(A_{V})$, which completes the proof.
\end{proof}

\section{Invariance principle for triangulated laminations}
\label{sec:inv}

In this section, we establish invariance principles for different classes of noncrossing  trees which converge to  triangulated stable laminations. As an application, we obtain limit theorems for large discrete random trees embedded in a noncrossing way.

\subsection{The continuous case}

If $\tau$ is a plane tree, we let $\mathsf{H}(\tau)=\max_{u \in \tau} |u|$ be its height. Recall that $ \W(\tau)$ is its {\L}ukasiewicz path.

\begin{prop}\label{prop:invcontinu} Let $f:[0,1] \rightarrow \R_{+}$ be a continuous function satisfying  \eqref{H_f} and such that $f(0)=f(1)=0$. For every $ n \geq 1$, let $\theta_{n}$ be a noncrossing tree with $n$ vertices and let $\tau_{n}$ be its shape. Assume that, as $n \rightarrow \infty$,
\begin{enumerate}
\item[(i)] $\mathsf{H}(\tau_{n})/n \rightarrow 0$;
\item[(ii)] There exists a sequence $B_{n} \rightarrow \infty$ such that $ \W(\tau_{n})/B_{n} \rightarrow f$ for the uniform topology. 
\end{enumerate}
Then  the convergence $\theta_{n} \rightarrow \mathbf{L}(f)$ holds for the Hausdorff topology.
\end{prop}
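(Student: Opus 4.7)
I will prove the Hausdorff convergence $\theta_{n} \to \mathbf{L}(f)$ by establishing the two one-sided inclusions $\theta_{n} \subseteq B(\mathbf{L}(f), \varepsilon)$ and $\mathbf{L}(f) \subseteq B(\theta_{n}, \varepsilon)$ for every $\varepsilon > 0$ and $n$ large. The setup is the bijection of Proposition \ref{prop:bij}: each vertex $u(p)$ of the shape $\tau_{n}$ sits at the root of unity $\e^{-2\i\pi \phi_{n}(p)/n}$ for a permutation $\phi_{n}$ of $\{0,\ldots,n-1\}$ determined by the shape and the decorations, and each edge of $\tau_{n}$ becomes a chord in $\theta_{n}$. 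The reverse-bijection formula yields the key arc-structure property: the subtree rooted at any vertex $v \in \tau_{n}$ occupies a contiguous arc of $\S$ of cardinality $|\tau_{v}|$, with $v$ placed inside this arc at an offset prescribed by its decoration.

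\textbf{Upper inclusion.} By compactness it suffices to show that any convergent subsequence of chords of $\theta_{n}$ has its limit in $\mathbf{L}(f)$. Consider an edge $u(p_{n})$--$u(q_{n})$ of $\tau_{n}$ with $u(p_{n})$ the parent of the $j_{n}$-th child $u(q_{n})$, and assume after extraction that $\phi_{n}(p_{n})/n \to s$, $\phi_{n}(q_{n})/n \to t$. The \L ukasiewicz identity reads $\W(q_{n}) = \W(p_{n}) + k_{u(p_{n})} - j_{n} \geq 0$ with $\W(r) > \W(q_{n})$ for $r \in (p_{n}, q_{n})$; since $f$ is continuous and $\W/B_{n} \to f$ uniformly, the jumps of $\W$ are $o(B_{n})$, so in particular $k_{u(p_{n})} = o(B_{n})$. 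Writing $p_{\infty} = \lim p_{n}/n$ and $q_{\infty} = \lim q_{n}/n$, passing to the limit gives $f(p_{\infty}) = f(q_{\infty}) = \min_{[p_{\infty}, q_{\infty}]} f$, that is $p_{\infty} \overset{f}{\thicksim} q_{\infty}$. The nested arc structure then translates this lex-parameter equivalence into the noncrossing-parameter equivalence $s \overset{f}{\thicksim} t$, so $[\e^{-2\i\pi s}, \e^{-2\i\pi t}] \in \mathbf{L}(f)$.

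\textbf{Lower inclusion.} For $s \overset{f}{\thicksim} t$ with $s < t$ and $m = f(s) = f(t) = \min_{[s,t]} f$, I build approximating edges by choosing $p_{n}$ near $\fl{ns}$ at which $\W/B_{n}$ is close to $m$ (exploiting $(H_{f})$ to make the choice unambiguous) and letting $q_{n}$ be the first index $> p_{n}$ with $\W(q_{n}) \leq \W(p_{n})$. The standard characterization of the parent map via the \L ukasiewicz path forces $u(p_{n})$ to be the parent of $u(q_{n})$, and the convergence $\W/B_{n} \to f$ combined with $(H_{f})$ yields $q_{n}/n \to t$. Applying the same arc-structure translation as above then gives $\phi_{n}(p_{n})/n \to s$ and $\phi_{n}(q_{n})/n \to t$, producing a chord of $\theta_{n}$ that Hausdorff-approximates $[\e^{-2\i\pi s}, \e^{-2\i\pi t}]$.

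\textbf{Main obstacle.} The technical core of both inclusions is reconciling the label-independence of the limit $\mathbf{L}(f)$ with the label-dependence of the permutation $\phi_{n}$: a direct calculation shows that $|\phi_{n}(p) - p|$ may be of order $n$ in the presence of a large unbalanced subtree above $u(p)$, so one cannot simply identify lex positions with noncrossing positions. The resolution is that the offsets induced by the decorations at the two endpoints of any fixed chord arise from the same nested collection of subtree arcs and transform the $f$-equivalence coherently. Hypothesis (i), $\mathsf{H}(\tau_{n})/n \to 0$, controls the additive depth-dependent correction $R(u(p))/n$, while the nesting identity controls the multiplicative subtree-size correction; together they give the uniform estimates over all edges of $\tau_{n}$ needed for both inclusions.
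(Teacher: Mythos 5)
Your high-level plan is structurally different from the paper's, and the differences are where the difficulties hide, not where they are resolved. The paper never proves the inclusion $\theta_{n}\subseteq B(\mathbf{L}(f),\varepsilon)$ directly. Instead it uses compactness of the space of compact subsets of $\overline{\D}$ and closedness of the set of laminations to extract a subsequential limit $L$, proves only $\mathbf{L}(f)\subset L$, and then invokes maximality of $\mathbf{L}(f)$ among laminations to conclude $L=\mathbf{L}(f)$. This sidesteps entirely the ``upper inclusion'' you attempt. Your upper-inclusion argument requires controlling the Hausdorff limit of \emph{every} chord of $\theta_n$, and you concede that $|\phi_n(p)-p|$ can be of order $n$; but the ``resolution'' paragraph asserting that the decoration-induced offsets ``transform the $f$-equivalence coherently'' is a claim, not an argument, and it is precisely the nontrivial content. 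What the paper actually proves is Lemma \ref{lem:nc}, which pins down the noncrossing position only in two special situations: when a vertex has $o(n)$ descendants (part (i)), or when it has a single dominant subtree (part (ii)). For a generic vertex with several $\Theta(n)$-sized subtrees the noncrossing position is genuinely indeterminate to within $\Theta(n)$, and your proposal does not engage with that case.

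The deeper gap is in your lower inclusion. You propose to approximate a fixed chord $[\e^{-2\i\pi s},\e^{-2\i\pi t}]$ of $\mathbf{L}(f)$ by a \emph{single} edge $u(p_n)$--$u(q_n)$, with $p_n\approx ns$ and $q_n$ the first return of $\W$ to level $\W(p_n)$. This does give $p_n/n\to s$, $q_n/n\to t$ in lexicographic coordinates (and $u(q_n)$ is indeed the last child of $u(p_n)$). But $u(p_n)$ has on the order of $(t-s)n$ descendants, so by the very obstacle you identify, $\phi_n(p_n)/n$ need not converge to $s$ at all: depending on the decoration label at $u(p_n)$ it can converge to any point of $[s,t]$ (roughly, $\phi_n(p_n)\approx p_n+T_1+\cdots+T_{l_{p_n}}$, and the $T$-sum ranges over $[0,S_{p_n}]$). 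Your asserted conclusion ``$\phi_n(p_n)/n\to s$'' is simply false without hypotheses you have not stated or used. The paper's fix is to approximate the chord not by one edge but by the whole path $\llbracket u_{r_n}^{(n)},u_{l_n}^{(n)}\rrbracket$ in $\theta_n$, with $r_n$, $l_n$ chosen so that $u^{(n)}_{r_n}$ and $u^{(n)}_{l_n}$ each have $o(n)$ descendants (so Lemma \ref{lem:nc}(i) localizes them near $\e^{-2\i\pi s}$ and $\e^{-2\i\pi t}$), and so that every intermediate vertex on the path has at most one $\Theta(n)$-sized subtree (so Lemma \ref{lem:nc}(ii) pins it near one of the two endpoints). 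Hypothesis (i), $\mathsf H(\tau_n)/n\to 0$, is used both in Lemma \ref{lem:nc} itself and, crucially, to guarantee the existence of the auxiliary indices $r_n^0<j_n^0$ in the paper's construction; it is not merely an additive $R(u)/n$ correction as your last paragraph suggests. The careful selection of indices $z_n,y_n,p_n,r_n,j_n,l_n$ is the substance of the proof, and your proposal replaces it with an assertion.
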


In other words, as soon as the {\L}ukasiewicz path of the shape of a sequence of noncrossing trees converges to a continuous function having distinct local minima, the limit of the noncrossing trees is a triangulation that only depends on their shapes and not on their embeddings, provided that their height is negligible compared to their total size.

Also notice that Assumption (i) is crucial, as it simple to construct a sequence of noncrossing trees satisfying (ii) but which does not converge for the Hausdorff topology. In addition, note that we do not require the local minima of $f$ to be dense in Proposition \ref{prop:invcontinu}, so that $\mathbf{L}(f)$ may be a triangulation with nonempty interior.

\begin{cor}\label{cor:finitevar}Let $\mu$ be critical offspring distribution with finite variance. For every $n \geq 1$, let $\mathscr{T}_{n}$ be a random noncrossing tree with $n$ vertices such that its shape has the law $\mathrm{BGW}^\mu_n$. Then  $\mathscr{T}_{n}$ converges in distribution to the Brownian triangulation as $n \rightarrow \infty$.
\end{cor}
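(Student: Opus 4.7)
The plan is to deduce the corollary from the deterministic invariance principle of Proposition \ref{prop:invcontinu}, applied to $f = \mathbbm{e} = X^{\rm ex}_2$. Recall from Section \ref{sec:trig} that by definition the Brownian triangulation is $\mathbf{L}_2 = \mathbf{L}(\mathbbm{e})$, so it suffices to check that the two hypotheses of Proposition \ref{prop:invcontinu} hold almost surely (after a suitable coupling) for the shape $\tau_n$ of $\mathscr{T}_n$.

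First, I would invoke the scaling limit \eqref{eq:Duq} of Duquesne in the case $\alpha = 2$, $B_n = \sigma \sqrt{n/2}$, to obtain that
\begin{equation}
\left( \frac{1}{B_n} \W_{\lfloor ns\rfloor}(\tau_n) \right)_{s \in [0,1]} \quad \mathop{\longrightarrow}^{(d)}_{n \to \infty} \quad (\mathbbm{e}(s))_{s \in [0,1]}
\end{equation}
in $\D([0,1],\R)$ equipped with the $J_1$ topology. Since the limit $\mathbbm{e}$ is almost surely continuous, this convergence automatically upgrades to convergence in the uniform topology, which is condition (ii) of Proposition \ref{prop:invcontinu}. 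Second, I would use the classical fact (due to Aldous, and a consequence of the Brownian CRT scaling limit for finite-variance $\mu$) that $\mathsf{H}(\tau_n)/\sqrt{n}$ is tight; in particular $\mathsf{H}(\tau_n)/n \to 0$ in probability, which is condition (i).

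By Skorokhod's representation theorem, I would work on a probability space where both convergences hold almost surely. It is well known that the Brownian excursion almost surely has distinct local minima (this follows from the fact that the same holds for Brownian motion and from standard absolute continuity arguments between Brownian motion and the excursion on compact subintervals away from $0$ and $1$), so assumption \eqref{H_f} is satisfied almost surely by $\mathbbm{e}$. Crucially, since Proposition \ref{prop:invcontinu} is a purely deterministic statement depending only on the shape $\tau_n$ (and not on the particular way the tree is drawn in the disk), it applies regardless of how the embedding of $\mathscr{T}_n$ extending $\tau_n$ is chosen. Hence $\mathscr{T}_n \to \mathbf{L}(\mathbbm{e}) = \mathbf{L}_2$ almost surely for the Hausdorff topology, which in turn gives the claimed convergence in distribution.

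There is no real obstacle here: the corollary is essentially the combination of the robust deterministic Proposition \ref{prop:invcontinu} with the standard finite-variance invariance principle for the \L{}ukasiewicz path. The only points requiring minimal care are the upgrade of the $J_1$ convergence to uniform convergence (immediate since the limit is continuous) and the verification that \eqref{H_f} is almost surely satisfied by $\mathbbm{e}$, both of which are well-documented in the literature.
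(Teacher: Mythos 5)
Your proof is correct and takes essentially the same route as the paper: apply Skorokhod's representation theorem to couple the Łukasiewicz path convergence \eqref{eq:Duq} with the classical height estimate $\mathsf{H}(\tau_n)/\sqrt{n} = O_{\P}(1)$, then invoke the deterministic Proposition \ref{prop:invcontinu} with $f = \mathbbm{e}$. The two details you spell out — that Skorokhod $J_1$ convergence upgrades to uniform convergence when the limit is continuous, and that $\mathbbm{e}$ almost surely has distinct local minima — are left implicit in the paper (the second is already stated in Section \ref{sec:trig}), so your write-up is if anything slightly more complete.
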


This result simply follows Proposition \ref{prop:invcontinu} by applying Skorokhod's representation theorem and combining \eqref{eq:Duq} with the well-known fact that $\mathsf{H}(S( \mathscr{T}_{n}))/\sqrt{n}$ converges in distribution to a positive random variable as $n \rightarrow \infty$.

\begin{rem}\label{rem:contour}
In \cite[Sec.~3.2]{CKdissections}, a similar result to Proposition \ref{prop:invcontinu} is established using the contour function with the additional assumptions that the leaves of $\tau_{n}$ are ``uniformly distributed'' and that the local minima of $f$ are dense. An important point is that we do not require the local minima of $f$ to be dense in Proposition \ref{prop:invcontinu}, which in particular allows triangulations with nonempty interior. We lift these restrictions by using the {\L}ukasiewicz path instead of the contour function. Another advantage of this approach is that invariance principles are usually simpler to establish for the {\L}ukasiewicz path than the contour function, and the fact that the leaves of $\tau_{n}$ are ``uniformly distributed'' does not necessarily follow from a functional invariance principle. For instance, Corollary \ref{cor:finitevar} applies to more general classes of random trees than Bienaymé--Galton--Watson trees, such as random trees with prescribed degree sequences \cite{BM14}.
\end{rem}

We start with a preliminary observation which will be crucial in the proof of Proposition \ref{prop:invcontinu}: roughly speaking, if the height of a plane tree is small compared to its size, then in any possible embedding of this plane tree as a noncrossing tree, the position of every vertex having a small number of descendants is known, up to a small error. In addition, if a vertex is such that only one of the subtrees grafted on its children is large, then it can only have two possible locations in the noncrossing embedding, up to a small error.

\begin{lem}\label{lem:nc}Let $\theta$ be a noncrossing with shape $\tau$ having $n$ vertices.  Denote by $\varnothing = u_{0} \prec u_{1} \prec \dots \prec u_{n-1}$ the vertices of $\tau$ labelled in lexicographical order.  
 Fix $\eta, \varepsilon \in (0,1)$. Let $0 \leq k \leq n-1$ and denote by $S_{k}$ the number of (strict) descendants of $u_{k}$. Assume that $\mathsf{H}(\tau)/n \leq \varepsilon$.
\begin{enumerate}
\item[(i)] Assume that $S_{k} \leq  \eta n$. Then
$$ \left| \e^{-2 \i \pi k/n }- u_{k} \right| \leq 7(\varepsilon+\eta),$$
where we identify $u_{k}$ with its associated complex number in the noncrossing tree $\theta$.
\item[(ii)] Let $M_{k}$ be the size of the largest subtree grafted on a child of $u_{k}$. Assume that $S_{k}-M_{k} \leq  \eta n$. Then
$$\min \left( \left | \e^{-2 \i \pi k/n }- u_{k} \right|,  \left | \e^{-2 \i \pi (k+S_{k})/n }- u_{k} \right| \right)  \leq 7( \varepsilon+\eta).$$
\end{enumerate}
\end{lem}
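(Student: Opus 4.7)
The plan is to exploit the explicit formula for the position of each vertex of the noncrossing embedding that was recorded in the proof of Proposition \ref{prop:bij}. Writing $u_0 \prec u_1 \prec \cdots \prec u_{n-1}$ for the lexicographic enumeration and $(l_1,\ldots,l_{n-1})$ for the associated decoration $C(\theta)$, the reconstruction formula identifies $u_k$ with the complex number $\e^{-2\i\pi q_k/n}$, where
\begin{equation}
q_k \;=\; k - R(u_k) + T_1 + T_2 + \cdots + T_{l_k},
\end{equation}
$T_i$ being the size of the subtree grafted on the $i$-th child of $u_k$ (and $T_0 = 0$) and $R(u_k) = |u_k| - L(u_k) - 1$ the count of ancestors folded to the right. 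Since $|\e^{-2\i\pi a} - \e^{-2\i\pi b}| \le 2\pi|a-b|$ and $2\pi < 7$, it suffices to check that $|q_k - k|/n \le \varepsilon+\eta$ in case (i) and that $\min(|q_k - k|, |q_k - k - S_k|)/n \le \varepsilon+\eta$ in case (ii).

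For (i), I would simply estimate the two contributions to $q_k - k$ separately: on the one hand, $R(u_k) \le |u_k| \le \mathsf{H}(\tau) \le \varepsilon n$ by the height assumption; on the other hand, $T_1 + \cdots + T_{l_k} \le T_1 + \cdots + T_{k_{u_k}} = S_k \le \eta n$ since the union of the subtrees grafted on the children of $u_k$ is precisely the set of strict descendants of $u_k$. The triangle inequality then gives $|q_k - k| \le (\varepsilon+\eta)n$, whence the desired bound.

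For (ii), let $i^\star \in \{1,\ldots,k_{u_k}\}$ be the index of the child of $u_k$ whose subtree attains the maximal size $M_k$, and distinguish two cases according to the position of $l_k$ relative to $i^\star$. If $l_k < i^\star$, then $T_1 + \cdots + T_{l_k} \le S_k - M_k \le \eta n$, and the same computation as in (i) yields $|q_k - k|/n \le \varepsilon+\eta$. If instead $l_k \ge i^\star$, then $T_1 + \cdots + T_{l_k} \ge T_{i^\star} = M_k$, so that
\begin{equation}
0 \le S_k - (T_1 + \cdots + T_{l_k}) = T_{l_k+1} + \cdots + T_{k_{u_k}} \le S_k - M_k \le \eta n,
\end{equation}
and combining with $R(u_k) \le \varepsilon n$ gives $|q_k - (k + S_k)|/n \le \varepsilon + \eta$, which is the second alternative in the minimum.

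There is no substantial obstacle here: everything reduces to bookkeeping with the reconstruction formula of Proposition \ref{prop:bij}. The only mildly delicate point is recognising that the minimum appearing in (ii) reflects exactly the dichotomy $l_k < i^\star$ versus $l_k \ge i^\star$, i.e.\ whether the "heavy" child subtree is placed to the left or to the right of $u_k$ in the planar embedding; this is what forces $u_k$ to lie either near $\e^{-2\i\pi k/n}$ or near $\e^{-2\i\pi (k+S_k)/n}$ up to an error controlled by $\varepsilon + \eta$.
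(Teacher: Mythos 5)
Your proposal is correct and follows essentially the same route as the paper: both start from the reconstruction formula $q_k = k - R(u_k) + T_1+\cdots+T_{l_k}$ implicit in the inverse bijection $\Phi_n^{-1}$, bound $R(u_k)\le\mathsf H(\tau)\le\varepsilon n$ using the height hypothesis, and for (ii) split according to whether the largest child subtree is placed to the left or to the right of $u_k$. One small remark in your favour: the paper's proof states the bound in the ``folded-to-the-left'' case as $|k+M_k-P_k|\le \mathsf H(\tau)+S_k-M_k$, which (applied naively together with $|e^{-2\i\pi a}-e^{-2\i\pi b}|\le 2\pi|a-b|$ and $|S_k-M_k|\le\eta n$) gives $2\pi\varepsilon+4\pi\eta$, exceeding $7(\varepsilon+\eta)$ since $4\pi>7$. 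You instead bound $|q_k-(k+S_k)|$ directly, observing that $-R(u_k)$ and $(T_1+\cdots+T_{l_k})-S_k = -(T_{l_k+1}+\cdots+T_{k_{u_k}})$ have the same sign, so the two error sources do not add up on opposite sides and one gets $|q_k-(k+S_k)|\le(\varepsilon+\eta)n$ cleanly. This is the estimate the lemma actually needs, so your writeup is the more careful of the two; otherwise the argument is the same.
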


\begin{proof}Let $P_{k} \in \{0,1, \ldots,n-1\}$ be such that the vertex $u_{k}$ is the complex number $\exp(-2 \i \pi P_{k} /n)$ in $\theta_{n}$. Then $$ \left| k-P_{k} \right| \leq \mathsf{H}(\tau)+ S_{k}.$$
This readily follows by the description of the  bijection $\Phi_{n}^{-1}$ given in the proof of Proposition \ref{prop:bij}: the error $ \mathsf{H}(\tau)$ corresponds to the vertices belonging to $\llbracket \varnothing, u_{k} \llbracket$ which may be folded to the right of $u_{k}$ in $\theta$, and the error $S_{k}$ correspond to all the vertices after $u_{k}$ (in the lexicographical order) which may be folded to the left of $u_{k}$. Assertion (i) follows by using the fact that $|\e^{-2 \i \pi s}-\e^{-2 \i \pi t} | \leq 2 \pi |s-t|$ for $s,t \in [0,1]$.

For (ii), let $\widetilde{u}$ be a child of $u_{k}$ having $M_{k}$ descendants (including itself). Then either $\widetilde{u}$ is folded to the right of $u_{k}$ in $\theta$, in which case all these $M_{k}$ descendants are also folded to the right of $u_{k}$ in $\theta$, so that  $| k-P_{k} | \leq \mathsf{H}(\tau)+ S_{k}-M_{k}$, or $\widetilde{u}$ is folded to the left of $u_{k}$ in $\theta$, in which case all these $M_{k}$ descendants are also folded to the left of $u_{k}$ in $\theta$, so that $| k+M_{k}-P_{k} | \leq \mathsf{H}(\tau)+ S_{k}-M_{k}$  (the errors $S_{k}-M_{k}$ come from the  descendants of $u_{k}$ which are not descendants of $ \widetilde{u}$ and which may be folded to the left of $u_{k}$). This completes the proof.\end{proof}

\begin{proof}[Proof of Proposition \ref{prop:invcontinu}]  Since the space of compact subsets of $\Db$ equipped with the Hausdorff distance is compact and the space of laminations is closed, up to extraction we thus suppose that $(\theta_{n})_{n \geq 1}$ converges towards a lamination $L$ of $\Db$ and we aim at showing that $L=\mathbf{L}(f)$. Since $\mathbf{L}(f)$ is maximal, it suffices to check that $\mathbf{L}(f) \subset L$.

Fix $0<s<t<1$ such that $s \overset{f}{\thicksim} t$ and let us show that $[\e^{-2\i \pi s},\e^{-2\i \pi t}] \subset L$. To this end, we fix $\varepsilon \in (0, (t-s)/10)$ and show that $[\e^{-2\i \pi s},\e^{-2\i \pi t}] \subset \theta_{n}^{(49 \varepsilon)}$ for every $n$ sufficiently large, where $X ^{(\varepsilon)}$ is the $ \varepsilon$-enlargement of a closed subset $X \subset \Db$. Observe from \eqref{H_f} that either $f(s) = f(t) < f(r)$ for every $r \in (s,t)$, or there exists a unique $r \in (s,t)$ such that $f(s) = f(t) = f(r)$ and neither $s$ nor $t$ are times of a local minimum. We may restrict our attention to the first case since, in the second one, there exists $s' \in (s-\varepsilon, s)$ and $t' \in (t, t+\varepsilon)$ such that $f(s') = f(t') < f(r)$ for every $r \in (s',t')$. We assume in the sequel that $f(s) = f(t) < f(r)$ for every $r \in (s,t)$ and that $n$ is sufficiently large so that $\mathsf{H}(\tau_{n})/n \leq \varepsilon$.

We start with some preliminary observations. Let $\W^{(n)}$ be the {\L}ukasiewicz path of $\tau_{n}$ and denote by $\varnothing = u^{(n)}_{0} \prec u^{(n)}_{1} \prec \dots \prec u^{(n)}_{n-1}$ the vertices of $\tau_{n}$ labelled in lexicographical order. It is well known that $u^{(n)}_{i}$ is an ancestor of $u^{(n)}_{j}$ if and only if $i \leq j$ and $ \W^{(n)}_{i}= \min_{[i,j]} \W^{(n)}$ (see e.g.~\cite[Prop.~1.5]{LG05}). As a consequence, for every $0 \leq k \leq n-1$, if $S^{(n)}_{k}$ denotes the number of (strict) descendants of $u^{(n)}_{k}$, we have
\begin{equation}
|u^{(n)}_{k}|= \# \left\{ 0 \leq j \leq k-1 : \W^{(n)}_{j}  = \min_{[j,k]} \W^{(n)}  \right\}, \quad  S^{(n)}_{k}= \min \left\{j \geq k : \W^{(n)}_{j}< \W^{(n)}_{k} \right\}-k-1. 
\end{equation}

Since $f(r)>f(s)=f(t)$ for every $r \in (s,t)$, there exists $z \in (s,s+\varepsilon)$ such that $\inf \{u >z : f(u){{\le}}f(z)\}   \in   (t-\varepsilon,t)$.   As a consequence, setting $\eta=(z-s)/10$, for every $n$ sufficiently large, there exists $z_{n} \in  \{1, \dots, n - 1\}$ such that 
\begin{equation}
\label{eq:defZn} z- \eta \leq n^{-1} z_{n}  \leq  z+\eta, \qquad  t- 2 \varepsilon < n^{-1} \min \left\{i > z_{n} : \W^{(n)}_{i}{{\le}}\W^{(n)}_{z_{n}} \right\}<t.
\end{equation} 
 
Similarly, since $f(s) < \inf_{[z-4\eta,z+2\eta]}f$ , we can find $y_{n} \in \{1,\ldots,n-1\}$  such that $s \leq n^{-1} y_{n}  \leq   z-4\eta$,
 \begin{equation}
\label{eq:defYn}  t- 2 \varepsilon < n^{-1} \min \left\{i > y_{n} : \W^{(n)}_{i}{{\le}}\W^{(n)}_{y_{n}} \right\} \quad  \textrm{ and }   \quad n^{-1} \min \left\{i > y_{n} : \W^{(n)}_{i}<\W^{(n)}_{y_{n}} \right\}<t.
\end{equation}

We claim that for every $n$ sufficiently large there exists $r^{0}_n <  j^{0}_{n} \leq z_{n} \in \{1, \dots, n - 1\}$ such that 
\begin{equation}
\label{eq:rnjn}z- 3 \eta < n^{-1} r^{0}_{n}, \qquad z- 2 \eta  < n^{-1} j^{0}_{n}, \qquad \W^{(n)}_{r^{0}_{n}}>\W^{(n)}_{j^{0}_{n}}
\end{equation} 
Indeed, if this were not the case, for every $j \in ((z-2 \eta)n,z_{n})$, we would have $ \W^{(n)}_{r} \leq \W^{(n)}_{j}$ for every $r \in ((z-3\eta)n, j)$, yielding  $\W^{(n)}_{r}  = \min_{[r,z_{n}]} \W^{(n)}$ for every $(z-3\eta) n < r  < (z-2\eta)n$, which would imply that $|u ^{(n)}_{z_{n}}| \geq \eta n$ and contradict Assumption (i). 

 \begin{figure}[!h]
 \begin{center}
    \includegraphics[scale=1]{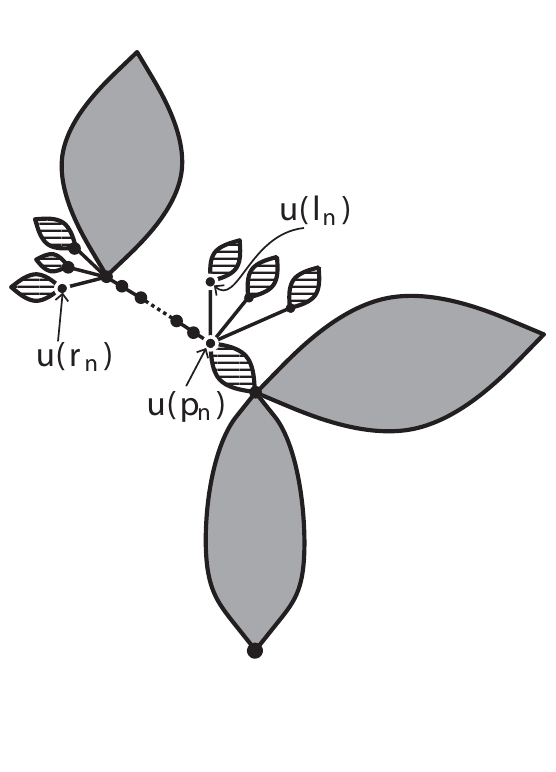}\quad \includegraphics[scale=1]{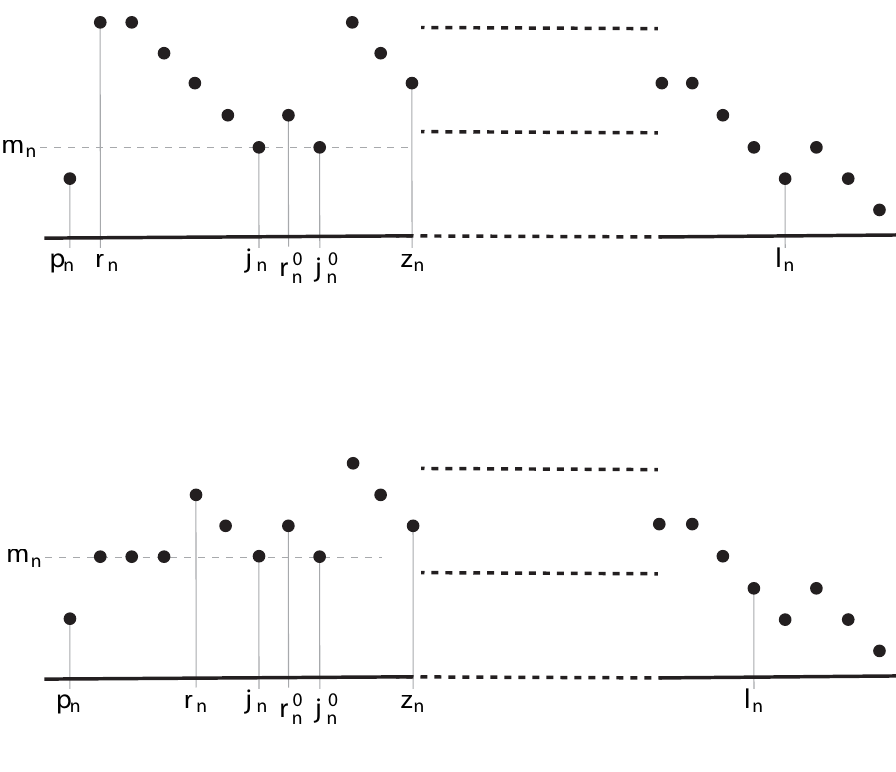}
   \caption{Illustration of the proof. On the left, the sizes of the dashed subtrees are small compared to the size of the  three grey subtrees. On the top right is illustrated the case where $ \W^{(n)}_{p_{n}+1}>m_{n}$ (so that $r_{n}=p_{n}+1$), and on the bottom right is illustrated the case where $\W^{(n)}_{p_{n}+1}=m_{n}$ (so that $r_{n}>p_{n}+1$). \label{fig:explic} }
 \end{center}
 \end{figure}

Choose $r^{0}_n <  j^{0}_{n} \leq z_{n} \in \{1, \dots, n - 1\}$ such that \eqref{eq:rnjn} holds.  Set
$$m_{n}= \min_{[r^{0}_{n},z_{n}]} \W^{(n)}, \qquad p_{n}= \max \left\{i< r^{0}_{n} : \W^{(n)}_{i} < m_{n}\right\}, \qquad r_{n}= \min \left \{i> p_{n}; \W^{(n)}_{i}>m_{n} \right\},$$
as well as
$$ j_{n}= \min \left\{i>r_{n} : \W^{(n)}_{i}=m_{n} \right\}, \quad     l_{n}= \min \left \{i>p_{n} :  \ \W^{(n)}_{i}<{{m_n}} \right\},$$
so that
$ y_{n} \leq p_{n} <r_{n}<j_{n} \le z_{n}<l_{n}$ and $\W^{(n)}_{i}=m_{n}$ for every $ p_{n}<i<r_{n}$. 
For the first inequality, note that $p_n < y_n$ would imply $\W^{(n)}_{y_n} \ge m_n$ and so $\min \{i > y_{n} : \W^{(n)}_{i} \le \W^{(n)}_{y_{n}}\} \le z_n$ which, by \eqref{eq:defZn}, contradicts \eqref{eq:defYn}. In addition, for every $n$ sufficiently large,
$$s \leq n^{-1} p_{n}, \qquad
n^{-1}j_{n} < s+2 \varepsilon,  \qquad n^{-1}j_{n}-n^{-1}p_{n} \leq 2 \varepsilon, \qquad  t-2\varepsilon<n^{-1} l_{n}<t.$$
The first inequality follows from the fact that $p_{n} \geq y_{n}$, the second one from the fact that 
$n^{-1}j_{n} \leq  n^{-1}z_{n} \leq z+\eta\leq s+2\varepsilon$, the third one from the first two, and the last one from \eqref{eq:defZn} and \eqref{eq:defYn}. Observe that $\W^{(n)}_{y_n} < \W^{(n)}_{p_n} < m_n$; we also have,
\begin{equation}
\label{eq:utile} n^{-1} \min \left \{i> p_{n} : \W^{(n)}_{i}<\W^{(n)}_{p_{n}} \right\}  < t
\end{equation}
by \eqref{eq:defYn}.

Note that either $\W^{(n)}_{p_{n}+1}>m_{n}$, in which case $r_{n}=p_{n}+1$ and $u^{(n)}_{r_{n}}$ is the first child of $u^{(n)}_{p_{n}}$, or $\W^{(n)}_{p_{n}+1}=m_{n}$ and so $u^{(n)}_{p_{n}+1}, \ldots, u^{(n)}_{r_{n}-2}$ all have one child, and $u^{(n)}_{r_{n}}$ is the first child of $u^{(n)}_{r_{n}-1}$.

This implies (see Fig.~\ref{fig:explic} for an illustration) that:
\begin{enumerate}
\item[(a)] $u^{(n)}_{l_{n}}$ is a child of $u^{(n)}_{p_{n}}$, since $\W^{(n)}_{p_n} \le \W^{(n)}_{l_n} < \W^{(n)}_{i}$ for every $p_n < i < l_n$;
\item[(b)] the number of descendants of $u^{(n)}_{r_{n}}$  is not greater than $2 \varepsilon n$, since, similarly, $\W^{(n)}_{j_n} < \W^{(n)}_{r_{n}}$ so that $S^{(n)}_{r_{n}} \leq r_{n}-j_{n} \leq r_{n}-p_{n} \leq 2 \varepsilon n$;
\item[(c)] the number of descendants of $u^{(n)}_{l_{n}}$ is not greater than $ 2\varepsilon n$ since
$$S^{(n)}_{l_{n}} \leq  \min \left\{i \geq l_{n}: \W_{i}^{(n)} < \W^{(n)}_{l_{n}} \right\}-l_{n} \leq nt-(t-2\varepsilon)n \leq 2\varepsilon n,$$
where we have used \eqref{eq:utile} for the second inequality.

\item[(d)] Fix $ p_{n} \leq i \leq r_{n}-1$. If $M^{(n)}_{i}$ denotes the size of the largest subtree grafted on a child of $u^{(n)}_{i}$, then $S^{(n)}_{i}-M^{(n)}_{i} \leq  4 \varepsilon n$. Indeed, note that this is trivial if $p_{n} < i < r_{n}-1$ since we observed that $u^{(n)}_{i}$ then has only one child; in the two other cases, we have $S^{(n)}_{i} \leq  nt-p_{n}$ using \eqref{eq:utile}, and in addition, $M^{(n)}_{i} \geq l_{n}-j_{n}$, so that
$$S^{(n)}_{i}-M^{(n)}_{i} \leq  (nt-l_{n})+(j_{n}-p_{n}) \leq 4 \varepsilon n.$$
\end{enumerate}

\emph{Step 1: Control of the positions of $u^{(n)}_{r_{n}}$ and $u^{(n)}_{l_{n}}$.} We claim that
\begin{equation}
\label{eq:control1}\left| \e^{-2 \i \pi s }- u^{(n)}_{r_{n}} \right| \leq 35 \varepsilon \quad\text{and}\quad \left| \e^{-2 \i \pi t }- u^{(n)}_{l_{n}} \right| \leq 35 \varepsilon.
\end{equation}
Indeed, By Lemma \ref{lem:nc} (i), we have $| \e^{-2 \i \pi r_{n}/n }- u^{(n)}_{r_{n}} | \leq 21\varepsilon$ by (b) and $| \e^{-2 \i \pi l_{n}/n }- u^{(n)}_{l_{n}} | \leq 21 \varepsilon$ by (c). Our claim then follows by the triangular inequality since $|\e^{-2 \i \pi r_{n}/n}- \e^{-2 \i \pi s}| \leq 7 |r_{n}/n-s| \leq 14 \varepsilon$ and $|\e^{-2 \i \pi l_{n}/n}- \e^{-2 \i \pi t}| \leq 7 |l_{n}/n-t| \leq 14 \varepsilon$.

\emph{Step 2: Control of the path between $u^{(n)}_{r_{n}}$ and $u^{(n)}_{p_{n}}$.} By (d), for every vertex $u^{(n)}_{k} \in \llbracket u^{(n)}_{p_{n}}, u^{(n)}_{r_{n}} \llbracket$ or, equivalently, for every $ p_{n} \leq k \leq r_{n}-1$, we have $S^{(n)}_{k}-M^{(n)}_{k} \leq  4 \varepsilon n$, so an application of Lemma \ref{lem:nc} (ii) yields
$$\min \left( \left | \e^{-2 \i \pi k/n }- u^{(n)}_{k} \right|,  \left | \e^{-2 \i \pi (k+S^{(n)}_{k})/n }- u^{(n)}_{k} \right| \right)  \leq 35 \varepsilon.$$
Note that $|\e^{-2 \i \pi k/n }-\e^{-2 \i \pi s}| \leq 7|k/n-s| \leq  7 (r_{n}/n-s) \leq  7(j_{n}/n-s) \leq 14 \varepsilon$. 
Also, $S_{k}^{(n)}+k \leq l_{n} < nt$, so that $ | \e^{-2 \i \pi (k+S^{(n)}_{k})/n }-\e^{-2 \i \pi t}| \leq 7 |(k+S^{(n)}_{k})/n-t| \leq 7(t-l_{n}) \leq 14 \varepsilon$. Therefore
\begin{equation}
\label{eq:control3}\min \left( \left | \e^{-2 \i \pi s}- u^{(n)}_{k} \right|,  \left | \e^{-2 \i \pi t/n }- u^{(n)}_{k} \right| \right)  \leq 49 \varepsilon.
\end{equation}

Since $u^{(n)}_{l_{n}}$ is a child of $u^{(n)}_{p_{n}}$ by (a), we conclude from \eqref{eq:control1} and \eqref{eq:control3} that   for every  $u \in \llbracket u^{(n)}_{r_{n}}, u^{(n)}_{l_{n}} \rrbracket$,
$$\min \left( \left | \e^{-2 \i \pi s}- u \right|,  \left | \e^{-2 \i \pi t/n }- u \right| \right)  \leq 49 \varepsilon.$$
Therefore, letting $ \mathcal{L}^{(n)}$ be the path  $\llbracket u^{(n)}_{r_{n}}, u^{(n)}_{l_{n}} \rrbracket$ in the noncrossing tree, we get that
$ \mathcal{L}^{(n)} \subset    [\e^{-2\i \pi s},\e^{-2\i \pi t}] ^{(49 \varepsilon)}$.
 Since $ \mathcal{L}^{(n)}$ is a union of finite segments joining $u^{(n)}_{r_{n}}$ to $u^{(n)}_{l_{n}}$, we get that 
$   [\e^{-2\i \pi s},\e^{-2\i \pi t}] \subset  (\mathcal{L}^{(n)})  ^{(49 \varepsilon)} \subset \theta_{n}^{(49 \varepsilon)}$, which establishes our original claim and completes the proof.
\end{proof}

\subsection{The càdlàg case}

Recall the definition of $p_{s}(\boldsymbol{\ell})$ from Sec.~\ref{sec:trilam}.

\begin{prop}\label{prop:invcadlag}Let $\theta_{n}$ be a noncrossing tree with $n$ vertices and shape $ \tau_{n}$. Denote by $\varnothing = u^{(n)}_{0} \prec u^{(n)}_{1} \prec \dots \prec u^{(n)}_{n-1}$ the vertices of $\tau_{n}$ listed in lexicographical order, let $k^{(n)}_{i}$ be the number of children of $u^{(n)}_{i}$ and let $L^{(n)}_{i}$ be the number of children of $u^{(n)}_{i}$ lying to the ``left'' of $u^{(n)}_{i}$ in $\theta_{{n}}$. Let $Z \in \D([0,1],\R)$ be a càdlàg function satisfying  \ref{H1}, \ref{H2}, \ref{H3}, \ref{H4}.
Assume that there exists a sequence $B_{n} \rightarrow \infty$ and a sequence $\bell = (\ell_{s}: s \in J(Z))$ indexed by the jump times of  $Z$  such that the following properties hold: \begin{enumerate}
\item[(i)] We have $\mathsf{H}(\tau_{n})/n \rightarrow 0$ as $n \rightarrow \infty$.
\item[(ii)] The convergence $\W(\tau_{n})/B_{n} \rightarrow Z$ holds for the Skorokhod topology.
\item[(iii)] For every $s \in (0,1)$, if $i_{n} \in \{0,1, \ldots,n-1\} $ is such that $\lim_{n \rightarrow \infty} k^{(n)}_{i_{n}}/B_{n}>0$ and $i_{n}/n \rightarrow s$, then $L^{(n)}_{i_{n}}/ k^{(n)}_{i_{n}} \rightarrow \ell_{s}$.
\item[(iv)] For every $s \in J(Z)$, $Z$ does not attain a local minimum at $p_{s}(\boldsymbol{\ell})$.
\end{enumerate}
 Then $\theta_{n} \rightarrow L(Z, \bell)$ for the Hausdorff topology.
\end{prop}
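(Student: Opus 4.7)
The plan is to follow the template of Proposition \ref{prop:invcontinu}. By compactness of the space of compact subsets of $\Db$ under the Hausdorff distance and closedness of the space of laminations, we extract a subsequence along which $\theta_n$ converges to some lamination $L'$, and then it is enough to show $L(Z,\bell) \subseteq L'$. Combined with the maximality of $L(Z,\bell)$ established in Proposition \ref{prop:triangulation_stable_canonique}, this forces $L' = L(Z,\bell)$ and the full convergence follows.

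To prove $L(Z,\bell) \subseteq L'$, we must approximate each chord of $L(Z,\bell) = L(Z) \cup \bigcup_{s \in J(Z)} C_s(\bell)$ by paths (unions of edges) in $\theta_n$. The key discrete-to-continuous dictionary is that a jump of size $\Delta Z(s)$ at $s \in J(Z)$ is witnessed by a vertex $u^{(n)}_{i_n}$ with $i_n/n \to s$ and $k^{(n)}_{i_n}/B_n \to \Delta Z(s)$. Unwinding the bijection $\Phi_n^{-1}$ of Proposition \ref{prop:bij}, the noncrossing angle of $u^{(n)}_{i_n}$ equals $(i_n + T_1 + \cdots + T_{L^{(n)}_{i_n}})/n$, where $T_j$ is the size of the subtree rooted at the $j$-th child, which is the first index $j>i_n$ where $\W(\tau_n)$ reaches $\W^{(n)}_{i_n} + k^{(n)}_{i_n} - L^{(n)}_{i_n} - 1$. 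By assumptions (ii) and (iii), this angle converges to $p_s(\bell)$; hypothesis (iv) prevents degenerate situations where this position could be ambiguous due to a coinciding local minimum.

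For chords in $C_s(\bell)$, namely $[\e^{-2\i\pi p_s(\bell)}, \e^{-2\i\pi r}]$ with $r \in B_s$, one identifies a descendant of $u^{(n)}_{i_n}$ in $\tau_n$ whose noncrossing angle converges to $r$; the tree path from $u^{(n)}_{i_n}$ to that descendant is then shown, via a variant of Lemma \ref{lem:nc}, to lie in a shrinking neighborhood of the chord, its intermediate vertices being close to either $\e^{-2\i\pi p_s(\bell)}$ or $\e^{-2\i\pi r}$. For chords in $L(Z)$ arising from a pair $s \simeq^Z t$, one constructs discrete times $r_n < l_n$ in $\tau_n$ exactly as in the proof of Proposition \ref{prop:invcontinu}, so that $u^{(n)}_{r_n}$ and $u^{(n)}_{l_n}$ are close to $\e^{-2\i\pi s}$ and $\e^{-2\i\pi t}$; the intermediate vertices are controlled by the same mechanism, with one new phenomenon: the tree path may traverse a jump vertex, producing a bend that lands near the corresponding $\e^{-2\i\pi p_u(\bell)}$, and this bend remains inside $L(Z,\bell)$ since it is part of the fan $C_u(\bell)$.

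The main obstacle will be establishing the càdlàg analogue of Lemma \ref{lem:nc}: neither of its two cases applies to a jump vertex, since such a vertex has many children, each with small subtree. The correct statement must locate such a vertex at angle $\approx p_s(\bell)$ using assumption (iii) directly, and simultaneously control the angular positions of each of its children so that they span $B_s$ densely in the limit; this is exactly what ensures that the fan $C_s(\bell)$ is recovered as the star of edges incident to $u^{(n)}_{i_n}$ in the Hausdorff limit. The remainder of the argument is then a careful bookkeeping exercise, combining (i)–(iv) with a direct adaptation of the geometric steps in the proof of Proposition \ref{prop:invcontinu}.
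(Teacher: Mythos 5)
Your proposal follows essentially the same route as the paper: extract a subsequential Hausdorff limit $L'$, reduce by maximality of $L(Z,\bell)$ to showing $L(Z,\bell)\subset L'$, locate the jump vertex $u^{(n)}_{i_n}$ at noncrossing angle converging to $p_s(\bell)$ via the bijection $\Phi_n^{-1}$ and assumptions (ii)--(iv) (with (iv) ensuring continuity of the relevant first passage time), and recover the fan $C_s(\bell)$ as the star of edges incident to $u^{(n)}_{i_n}$. The one imprecision is in your second paragraph, where you describe approximating a chord $[\e^{-2\i\pi p_s(\bell)},\e^{-2\i\pi r}]$ of $C_s(\bell)$ by a tree path to a descendant with intermediate vertices close to an endpoint; the paper instead selects a \emph{child} $u^{(n)}_{j_n}$ of the jump vertex with $o(n)$ descendants whose angle converges to $r$ (perturbing $r$ slightly when $r$ is a local-minimum time, which is permitted by \ref{H1}--\ref{H4}), so that a \emph{single edge} of $\theta_n$ approximates each chord of the fan and no intermediate vertices need be controlled --- a simplification your final paragraph already anticipates.
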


Roughly speaking, condition (iv) ensures that the special vertex from which each face is triangulated is not an endpoint of a chord of $L(Z,\bell)$ (but of course belongs to the closure of the endpoints of  chords).

 \begin{proof}
Since the space of compact subsets of $\overline{\D}$ equipped with the Hausdorff distance is compact and the space of laminations is closed, up to extraction we thus suppose that $(\theta_{n})_{n \geq 1}$ converges towards a lamination $L$ of $\overline{\D}$ and we aim at showing that $L=L(Z, \bell)$. Since $L(Z, \bell)$ is maximal, it suffices to check that $L(Z, \bell) \subset L$.

We first show that $L(Z) \subset L$. To this end, fix $\varepsilon > 0$ and choose $0 \le s < t \le 1$ such that $s \simeq^Z t$. If $\Delta Z(s)=0$, then $Z(t)=Z(s)= \inf_{[s,t]} Z$ and $\Delta Z(t)=0$. Arguments similar to those of the proof of Proposition \ref{prop:invcontinu} to show that $ [\e^{-2\i \pi s},\e^{-2\i \pi t}] \subset \theta_{n}^{(49 \varepsilon)}$ for $n$ sufficiently large. If $\Delta Z(s)>0$, then $t= \inf \{u>s : Z(t)=Z(s-)\}$ and for every $\varepsilon>0$ we have $\inf_{[s-\varepsilon,s]} Z<Z(s-)$ by \ref{H3} and $\inf_{[t,t+\varepsilon]} Z<Z(t)$ by \ref{H2}. Using these inequalities, again similar arguments to those of the proof of Proposition \ref{prop:invcontinu} yield that $ [\e^{-2\i \pi s},\e^{-2\i \pi t}] \subset \theta_{n}^{(49 \varepsilon)}$ for $n$ sufficiently large. We leave the (merely technical) details to the reader, and refer to \cite[Proof of Theorem 7.1]{Mar15} for detailed arguments.
 
Next, let $s \in J(Z)$, set $s'= \inf\{t > s : Z(t) = Z(s-)\}$ and fix $t \in [s,s']$ such that $Z(t) = \inf_{[s,t]}Z$ (observe that \ref{H3} implies $\Delta Z(t)=0$). We shall show that $[\e^{-2\i \pi p_{s}(\bell)},\e^{-2\i \pi t}] \subset \theta_{n}^{(\varepsilon)}$ for $n$ sufficiently large. Let $i_n$ as in (iii) and set
\begin{equation*}
S^{(n)}_{i_{n}}= \min \left\{j \geq i_{n}+1 : \W^{(n)}_{j}= \W^{(n)}_{i_{n}+1}-L^{(n)}_{i_{n}} \right\}-i_{n}-1,
\end{equation*}
the total number of (strict) descendants of the first $L^{(n)}_{i_{n}}$ children of $u^{(n)}_{i_{n}}$. Then, by definition of $L^{(n)}_{i_{n}}$,
\begin{equation*} \left| u^{(n)}_{i_{n}} - \e^{-2 \i \pi (i_{n}+S^{(n)}_{i_{n}})/n}\right| \leq 7 \frac{\mathsf{H}(\tau_{n})}{n},\end{equation*}
where the error term corresponds to the vertices belonging to $\llbracket \varnothing, u^{(n)}_{i_{n}} \llbracket$ which may be folded to the right of $u^{(n)}_{i_{n}}$ in $\theta_{n}$. Since $k^{(n)}_{i_{n}}/B_{n} \rightarrow \Delta Z(s)$, we have $L^{(n)}_{i_{n}}/B_{n} \rightarrow \Delta Z(s) \ell_{s}$. In addition, $\W^{(n)}_{i_{n}}/B_{n} \rightarrow Z(s)$. By (iv), $Z$ does not attain a local minimum at $p_{s}(\bell)$, so by continuity properties of first passage times for the Skorokhod topology,
\begin{equation*}
n^{-1} \cdot \min \left\{j \geq i_{n} :  \W^{(n)}_{j}= \W^{(n)}_{i_{n}+1}-L^{(n)}_{i_{n}} \right\}  \quad \mathop{\longrightarrow}_{n \rightarrow \infty} \quad \inf \left\{t \geq s : Z_{t}=Z_{s} - \Delta Z (s) \cdot \ell_{s} \right\}=p_{s}(\bell).
\end{equation*}
Therefore $n^{-1}(S^{(n)}_{i_{n}}+i_{n}) \rightarrow p_{s}(\bell) $, implying, by the previous bound and (i) that 
\begin{equation}\label{eq:truc1}
\left| \e^{-2\i\pi p_{s}(\boldsymbol{\ell})} - u^{(n)}_{i_{n}} \right| \cv 0.
\end{equation}

\begin{figure}[!h]
 \begin{center}
    \includegraphics[scale=1.1]{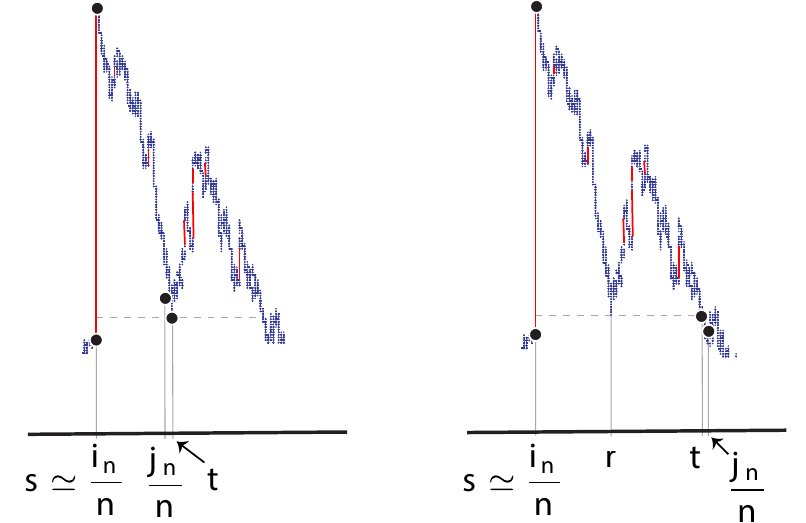}
    \caption{Illustration of the choice of $j_{n}$. On the left, the case where $Z(r)>Z(t)$ for every $r \in (s,t]$ and on the right, the case where there exists (a unique) $r \in (s,t)$ such that $Z(r)=Z(t)$.\label{fig:cas1et2} }
 \end{center}
\end{figure}

We claim that there exists $j_{n} \in \{0,1, \ldots,n-1\} $ such that $j_{n}/n \rightarrow t$, $u^{(n)}_{j_{n}}$ is a child of $u^{(n)}_{i_{n}}$ and the number of descendants of $u^{(n)}_{j_{n}}$ is $o(n)$ as $n \rightarrow \infty$. Indeed, suppose first that $Z(r)>Z(t)$ for every $r \in (s,t)$. Fix $\varepsilon \in (0,t-s)$; from \ref{H3}, the infimum of $Z$ over $[s,t-\varepsilon]$ is achieved at some point of this interval. Therefore, for $n$ large enough, there exists an integer $j_{n}$ such that $j_n/n \in [t-\varepsilon,t]$, $\W_{m}>\W_{j_{n}}$ for every integer $m \in [i_n+1, j_n-1]$, and $\inf\{l > j_n : \W_l = W_{j_n}-1\} \le j_n + n \varepsilon$; the claim then follows. Suppose next that there exists $r \in (s,t)$ such that $Z(r)=Z(t)=\inf_{[s,t]}Z$; then note that $r$ must be a time of local minimum by \ref{H3}, so this can only occur when $Z(t) > Z(s-)$ because otherwise it would contradict \ref{H4}, also $t$ cannot be a time of a local minimum by \ref{H1}. We conclude that for every $\varepsilon > 0$, we can find $t' \in (t, t+\varepsilon)$ such that $Z(s)<Z(t')<Z(r)$ for every $r \in (s,t')$ and the previous approximation thus applies.

This implies that $| \e^{-2\i \pi j_{n}/n} - u^{(n)}_{j_{n}}| \to 0$ by Lemma \ref{lem:nc} (i), so that 
\begin{equation}\label{eq:truc2}
\left| \e^{-2\i \pi t} - u^{(n)}_{j_{n}} \right| \cv 0.
\end{equation}
Combining \eqref{eq:truc1} and \eqref{eq:truc2}, since $u^{(n)}_{j_{n}}$ is a child of $u^{(n)}_{i_{n}}$, we get that for every $n$ sufficiently large
\begin{equation*}
\left[\e^{-2\i\pi p_{s}(\boldsymbol{\ell})}, \e^{-2\i\pi t} \right] \subset \left[u^{(n)}_{i_{n}},u^{(n)}_{j_{n}} \right]^{(\varepsilon)} \subset \theta_{n}^{(\varepsilon)}.
\end{equation*}
This completes the proof.
\end{proof}

\subsection{The uniform stable triangulation}

If $ \tau$ is a plane tree, we  set $\Theta^{U}(\tau)=\Phi_{n}^{-1}(\tau, \mathcal{C})$, where $ \mathcal{C}$ is a random element of $ \mathbb{C}(\tau)$ chosen uniformly at random.  In other words, $\Theta^{U}(\tau)$  is a noncrossing tree obtained by a ``uniform'' embedding of $\tau$.

Our next result establishes an invariance principle for large critical Bienaymé--Galton--Watson trees in the domain of attraction of a stable law of index $\alpha \in (1,2)$ which are embedded uniformly in a noncrossing way. The distributional limit is the uniform stable triangulation, which was introduced in Sec.~\ref{sec:trilam}.

\begin{thm}\label{thm:convergence_plongement_uniforme}
Fix $\alpha \in (1,2)$. For every critical offspring distribution $\mu$ belonging to the domain of attraction of a stable law of index $\alpha$, if $\CRT_n$ is a {\GW} tree with offspring distribution $\mu$ conditioned to have $n$ vertices, the convergence
\begin{equation}
\Theta^{\rm U}(\CRT_n) \cvloi \mathbf{L}^{\rm U}_\alpha
\end{equation}
holds in distribution for the Hausdorff distance on the space of all compact subsets of $\overline{\D}$.
\end{thm}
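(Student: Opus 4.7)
I would apply the càdlàg invariance principle of Proposition~\ref{prop:invcadlag} with $Z = \X$. By Duquesne's theorem~\eqref{eq:Duq}, the rescaled {\L}ukasiewicz path $\W(\CRT_n)/B_n$ converges in distribution to $\X$ in the Skorokhod topology, and $\X$ almost surely satisfies \ref{H1}--\ref{H4} by \cite[Prop.~2.10]{Kor11}. Condition (i) is standard: for a conditioned {\GW} tree in the stable domain of attraction of index $\alpha$, one has $\mathsf{H}(\CRT_n) = O_{\P}(n/B_n)$ (via convergence of the height process of Duquesne--Le Gall), which is $o(n)$ since $B_n \to \infty$.

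The heart of the proof is to realise conditions (ii) and (iii) jointly on the same probability space and to define the continuous labels $\bell$. Conditionally on the shape $\CRT_n$, the uniform decoration in the definition of $\Theta^{\rm U}$ produces labels $(L^{(n)}_i)_{1 \le i \le n-1}$ that are conditionally independent, with $L^{(n)}_i$ uniform on $\{0, 1, \dots, k^{(n)}_i\}$. For each jump time $s \in J(\X)$, the Skorokhod convergence selects an index $i_n(s)$ such that $i_n(s)/n \to s$ and $k^{(n)}_{i_n(s)}/B_n \to \Delta\X(s) > 0$; on such indices $L^{(n)}_{i_n(s)}/k^{(n)}_{i_n(s)}$ is close in law to a uniform on $[0,1]$, and by the conditional independence of the labels these ratios are asymptotically jointly independent across distinct jump times of $\X$. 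Fixing an enumeration $(s_j)_{j \ge 1}$ of $J(\X)$, a finite-dimensional joint weak convergence argument together with Skorokhod's representation theorem yields a probability space on which
\begin{equation*}
\left( \W(\CRT_n)/B_n, \ \bigl(L^{(n)}_{i_n(s_j)}/k^{(n)}_{i_n(s_j)}\bigr)_{j \ge 1} \right) \ \longrightarrow\ \left(\X,\, (U_{s_j})_{j \ge 1}\right)
\end{equation*}
almost surely, where $(U_s)_{s \in J(\X)}$ is an i.i.d.\ family of uniforms on $[0,1]$ independent of $\X$. Setting $\bell = (U_s)_{s \in J(\X)}$, condition (iii) is satisfied.

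For condition (iv), observe that, conditionally on $\X$, the map $u \mapsto \inf\{r \ge s : \X_r = \X_{s-} - \Delta\X(s)\, u\}$ is strictly increasing on $[0,1]$ (no plateau of a stable Lévy excursion), hence $p_s(\bell)$ has a diffuse conditional distribution. The set of local minimum times of $\X$ is a.s.\ countable, and $U_s$ is independent of $\X$, so $\P(\X \text{ attains a local minimum at } p_s(\bell) \mid \X) = 0$ for each $s \in J(\X)$; a countable union over $J(\X)$ gives (iv) a.s. Proposition~\ref{prop:invcadlag} then delivers $\Theta^{\rm U}(\CRT_n) \to L(\X, \bell)$ in the Hausdorff topology on the coupling, and by construction the law of $L(\X, \bell)$ is that of $\mathbf{L}^{\rm U}_\alpha$, proving the theorem.

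The main obstacle is the coupling in the second paragraph: the labels $L^{(n)}_i$ are indexed by the discrete tree, while $\bell$ must be indexed by the random jump set $J(\X)$, so one needs to ``match'' the macroscopic jumps of $\W(\CRT_n)/B_n$ with those of $\X$ and then transfer the labels. This matching is automatic once the Skorokhod topology convergence in (ii) is realised a.s., but the diagonal extraction for the full (countable) family $J(\X)$ is the technical core; everything else is either a direct consequence of known invariance principles or a routine check.
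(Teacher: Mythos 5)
Your proposal is correct and follows essentially the same route as the paper: apply Proposition~\ref{prop:invcadlag} with $Z = \X$ via Skorokhod's representation theorem, use Duquesne's theorem \eqref{eq:Duq} together with the height bound for conditions (i)--(ii), invoke the conditional uniformity and independence of the left-labels $L^{(n)}_i$ for condition (iii), and note the a.s.\ countability of both $J(\X)$ and the local-minimum times of $\X$ for condition (iv). You make explicit two points the paper leaves implicit --- the diagonal extraction coupling the labels over the countably many macroscopic jumps, and the injectivity of $u \mapsto \inf\{r \geq s : \X_r = \X_{s-} - \Delta\X(s)\,u\}$ which justifies that the conditional law of $p_s(\bell^U)$ given $\X$ is diffuse --- both of which are indeed the right justifications.
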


\begin{proof}
We want to apply Skorokhod's representation theorem and Proposition \ref{prop:invcadlag} with $Z = \X$. Assumptions (i) and (ii) hold by \eqref{eq:Duq} as well as the fact $\frac{B_{n}}{n} \mathsf{H}(S( \mathscr{T}_{n}))$ converges in distribution to a positive random variable as $n \rightarrow \infty$ \cite{Du03}. To see that Assumption (iii) holds, denote by $\varnothing = u^{(n)}_{0} \prec u^{(n)}_{1} \prec \dots \prec u^{(n)}_{n-1}$ the vertices of $\CRT_n$ listed in lexicographical order, let $k^{(n)}_{i}$ be the number of children of $u^{(n)}_{i}$ and let $L^{(n)}_{i}$ be the number of children of $u^{(n)}_{i}$ lying to the ``left'' of $u^{(n)}_{i}$ in $\Theta^{\rm U}(\CRT_n)$.  By definition, conditionally given $ \CRT_{n}$, $L^{(n)}_{i}$ is uniform on $ \{0,1, \ldots,k^{(n)}_{i}\}$, and the random variables $(L^{(n)}_{i} : 0 \leq i \leq n-1)$ are independent. In particular, conditionally on $k^{(n)}_{i_{n}} \rightarrow \infty$, $L^{(n)}_{i_{n}}/ k^{(n)}_{i_{n}}$ converges in distribution to a uniform random variable on $[0,1]$.
Finally, Assumption (iv) holds: almost surely, for every $s \in J(\X)$, $\X$ does not attain a local minimum at $p_{s}(\bell^U)$, where, conditionally given $\X$, $\bell^{U}=(\ell_{s})_{s \in J(\X)}$ is a sequence of i.i.d. uniform random variables on $[0,1]$. Indeed, almost surely, the times at which $\X$ attains a local minimum are at most countable, so for every $s \in J(\X)$, the probability that $p_{s}(\bell^U)$ is such a time is zero and, almost surely, $J(\X)$ is countable.
\end{proof}

\section{Applications to simply generated noncrossing trees}
In this section, we consider simply generated noncrossing trees, as defined by \eqref{eq:def_arbres_nc_simplement_generes}. We first prove that such trees are almost Bienaymé--Galton--Watson trees, and then establish Theorem \ref{thm:convergence_arbres_SG} by using the invariance principles obtained in the previous section.

We denote by $\mathrm{BGW}^{\mu_\varnothing, \mu}$ the law of a modified {\GW} tree, where the offspring distribution of the root is $\mu_\varnothing$, and that of the other vertices is $\mu$. For every integer $n$, we denote by $\mathrm{BGW}_n^{\mu_\varnothing, \mu}$ the law of such a tree conditioned to have $n$ vertices.

\subsection{Simply generated noncrossing trees are almost {\GW} trees}
As we have seen, every noncrossing tree  $\theta$ carries a planar structure, canonically rooted at the vertex corresponding to the complex number $1$, which is called the shape of $\theta$ and is denoted by $S(\theta)$. If $\mathscr{T}_n$ a random noncrossing tree uniformly distributed on $\NC_n$, then Thm. 1 in \cite{MP02} shows that $S(\mathscr{T}_{n})$ is a modified {\GW} tree, where the root has a different offspring distribution, conditioned to have size $n$. Our next result extends this to simply generated noncrossing trees.

\begin{thm}\label{thm:arbre_SG_presque_GW}
Assume that
\begin{equation}
\rho \quad \coloneqq \quad  \left(\limsup_{k \to \infty} w(k)^{1/k}\right)^{-1} >0.
\end{equation}
Fix $b \in (0,\rho)$,  set
\begin{equation}
a = \left(\sum_{k=0}^\infty (k+1) w(k+1) b^k\right)^{-1}
\qquad\text{and}\qquad
c = \left(\sum_{k=1}^\infty w(k) b^k\right)^{-1},
\end{equation}
and define
\begin{equation}\label{eq:lois_de_reproduction_cas_general}
\begin{cases}   
\mu(k) = a (k+1) w(k+1) b^k & (k \ge 0),
\\
\mu_\varnothing(k) = c w(k) b^k & (k \ge 1).
\end{cases}
\end{equation}
Then the law of the shape of a noncrossing tree sampled according to $\P_n^w$ is $\mathrm{BGW}_n^{\mu_\varnothing, \mu}$.
\end{thm}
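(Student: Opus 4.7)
The plan is to transfer the problem from $\NC_n$ to the space of decorated plane trees $\T_n^{\mathsf{dec}}$ via the bijection $\Phi_n$ of Proposition~\ref{prop:bij}, and then match the resulting weighted law on plane trees with $\mathrm{BGW}_n^{\mu_\varnothing, \mu}$ by direct computation.

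First I would observe that the weight $\Omega^w(\theta)$ depends on $\theta$ only through its shape $\tau = S(\theta)$. Indeed, for any non-root vertex $u$ of $\tau$, the degree of $u$ in $\theta$ equals $k_u(\tau)+1$ (its children plus its parent), while for the root we have $\deg \varnothing = k_\varnothing(\tau)$. Hence, writing abusively
\begin{equation}
\widetilde{\Omega}^w(\tau) \;=\; w(k_\varnothing(\tau)) \prod_{u \in \tau \setminus \{\varnothing\}} w(k_u(\tau)+1),
\end{equation}
we have $\Omega^w(\theta) = \widetilde{\Omega}^w(S(\theta))$. By Proposition~\ref{prop:bij}, the fiber of $S : \NC_n \to \T_n$ over $\tau$ has cardinality $\#\mathbb{C}(\tau) = \prod_{u \in \tau \setminus \{\varnothing\}}(k_u(\tau)+1)$. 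Therefore
\begin{equation}
\P_n^w(S(\mathscr{T}_n) = \tau) \;=\; \frac{1}{Z_n^w}\, \widetilde{\Omega}^w(\tau)\,\#\mathbb{C}(\tau) \;=\; \frac{1}{Z_n^w}\, w(k_\varnothing(\tau)) \prod_{u \neq \varnothing}(k_u(\tau)+1)\,w(k_u(\tau)+1).
\end{equation}

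Next I would verify that $\mu$ and $\mu_\varnothing$ as defined in \eqref{eq:lois_de_reproduction_cas_general} are genuine probability measures on $\Z_+$ and $\N$ respectively, which is immediate from the definitions of the normalizing constants $a$ and $c$ and the assumption $b \in (0,\rho)$ (so the series converge). Plugging in the definitions, for any $\tau \in \T_n$,
\begin{equation}
\mathrm{BGW}^{\mu_\varnothing,\mu}(\tau) \;=\; c\, w(k_\varnothing(\tau))\, b^{k_\varnothing(\tau)} \prod_{u \neq \varnothing} a (k_u(\tau)+1)\, w(k_u(\tau)+1)\, b^{k_u(\tau)}.
\end{equation}
Using the standard identity $\sum_{u \in \tau} k_u(\tau) = n-1$, the factor $b^{\sum_u k_u(\tau)} = b^{n-1}$ factors out, and so does $c \cdot a^{n-1}$. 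Thus
\begin{equation}
\mathrm{BGW}^{\mu_\varnothing,\mu}(\tau) \;=\; c\, a^{n-1}\, b^{n-1}\, w(k_\varnothing(\tau))\prod_{u \neq \varnothing}(k_u(\tau)+1)\,w(k_u(\tau)+1),
\end{equation}
for every $\tau$ with $|\tau|=n$.

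Finally, conditioning on $\{|\tau|=n\}$ cancels the common prefactor $c\,a^{n-1}\,b^{n-1}$, giving exactly the same formula, up to the normalizing constant, as the expression obtained above for $\P_n^w(S(\mathscr{T}_n)=\tau)$. Since both are probability measures on $\T_n$, the normalizing constants agree and $\P_n^w \circ S^{-1} = \mathrm{BGW}_n^{\mu_\varnothing,\mu}$, as claimed. There is no real obstacle in this proof: the main point is the shape-only dependence of $\Omega^w$ combined with the fibre count $\#\mathbb{C}(\tau)$, and the small miracle is that the shifted weights $(k+1)w(k+1)$ appearing from the fibre counting are exactly what makes the offspring distribution $\mu$ probabilistic while keeping the root offspring distribution $\mu_\varnothing$ unshifted (hence ``almost Galton--Watson'').
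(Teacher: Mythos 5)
Your proposal is correct and follows essentially the same route as the paper's proof: both rest on the observation that $\Omega^w(\theta)$ depends only on the shape $S(\theta)$, the fibre count $\#\mathbb{C}(\tau)=\prod_{u\ne\varnothing}(k_u(\tau)+1)$ from Proposition~\ref{prop:bij}, the identity $\sum_{u\in\tau}k_u(\tau)=n-1$ to factor out $(ab)^{n-1}$, and a final normalization argument; the only difference is the order in which the two probability measures are computed.
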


Observe that
\begin{equation}\label{eq:loi_racine_moyenne_finie}
\sum_{j=1}^\infty j \mu_\varnothing(j) = \frac{bc}{a},
\qquad\text{whence}\qquad
\frac{k \mu_\varnothing(k)}{\sum_{j=1}^\infty j \mu_\varnothing(j)} = \mu(k-1).
\end{equation}
We shall see that the probability that the root of a modified {\GW} tree conditioned to have $n$ vertices has $k$ children converges towards $\frac{k \mu_\varnothing(k)}{\sum_{j=1}^\infty j \mu_\varnothing(j)}$ as $n \to \infty$. The above identity then translates roughly the fact that in a large modified {\GW} tree as above, the law of the degree of the root is close to that of the other vertices, as it is the case for a simply generated noncrossing tree.

\begin{rem}\label{rem:b}The condition \eqref{eq:defb} appearing in Theorem \ref{thm:convergence_arbres_SG} is equivalent to the fact that the probability measure $\mu$ defined by \eqref{eq:lois_de_reproduction_cas_general} can be chosen to be critical; in this case, it is unique. Indeed, consider the function
\begin{equation}\label{eq:Psi}
\Psi : x \in [0,\rho) \quad \longmapsto \quad  \frac{\sum_{k=0}^\infty k (k+1) w(k+1) x^k}{\sum_{k=0}^\infty (k+1) w(k+1) x^k}.
\end{equation}
Janson \cite[Lem. 3.1]{Jan12} observed that $\Psi$ is null at $0$, continuous and  increasing. Therefore, for every value $m \in (0, \Psi(\rho))$, where $\Psi(\rho) \coloneqq \lim_{x \uparrow \rho} \Psi(x)$, there exists a unique probability measure $\mu$ of the form \eqref{eq:lois_de_reproduction_cas_general} with expectation $m$. In particular, one can choose $\mu$ to be critical if and only if
\begin{equation}
\lim_{x \uparrow \rho} \Psi(x) \ge 1,
\end{equation}
in which case, $b > 0$ is the unique number such that
\begin{equation}\label{eq:hypothese_thm_arbre_SG_presque_GW}
\sum_{k=0}^\infty  (k+1)(k-1)w(k+1) b^k = 0.
\end{equation}
\end{rem}

\begin{rem}\label{rem:Marckert_Panholzer}
Consider the uniform distribution on noncrossing trees: $w(k) = 1$ for every $k \ge 1$. Then \eqref{eq:hypothese_thm_arbre_SG_presque_GW} holds with $b=1/3$. A simple calculation yields $a=4/9$ and $c=2$, so that \eqref{eq:lois_de_reproduction_cas_general} reads
\begin{equation}\label{eq:lois_de_reproduction_cas_uniforme}
\begin {cases}   
\mu(k) = 4 (k+1) 3^{-(k+2)} & (k \ge 0),
\\
\mu_\varnothing(k) = 2 \times 3^{-k} & (k \ge 1).
\end {cases}
\end{equation}
In particular, Thm. \ref{thm:arbre_SG_presque_GW} recovers the special case of Marckert \& Panholzer \cite[Thm. 1]{MP02}.
\end{rem}

\begin{proof}[Proof of Theorem \ref{thm:arbre_SG_presque_GW}]
Fix $n \ge 1$ and denote by $\Q_n^w$ the law of the shape of a random noncrossing tree sampled according to $\P_n^w$. We aim at showing that $\Q_n^w = \mathrm{BGW}_n^{\mu_\varnothing, \mu}$. To this end, fix $\tau \in \T_{n}$, and let $k_{0}, k_{1}, \ldots, k_{n-1}$ be the number of children of its vertices listed in lexicographical order (in particular, $k_{0}$ is the number of children of its root). By definition,
$$\mathrm{BGW}^{\mu_\varnothing, \mu}(\tau)=\mu_\varnothing(k_0) \prod_{i=1}^{n-1} \mu(k_i)=c w(k_0) b^{k_0} \prod_{i=1}^{n-1} a (k_i+1) w(k_i+1) b^{k_i}.$$
Note that $\sum_{i=0}^{n-1} k_{i}=n-1$, whence
$$\mathrm{BGW}^{\mu_\varnothing, \mu}(\tau)
=  c a^{n-1} b^{n-1} w(k_0) \prod_{i=1}^{n-1} (k_i+1) w(k_i+1).$$

Next, observe that $\P_n^w(\theta)$ only depends on the shape of $\theta$ and that $\# \{\theta \in \NC_{n} : S(\theta)=\tau\}=\#\mathbb{C}(\tau) = \prod_{i=1}^{n-1}(k_{i}+1)$ by Proposition \ref{prop:bij}. It follows that
$$\Q_n^w(\tau)= \sum_{\theta \in \NC_{n} : S(\theta)=\tau}  \P_n^w(\theta)= \frac{1}{Z_{n}^{w}}\#\mathbb{C}(\tau) \cdot \prod_{u \in \tau} w(\deg u)=\frac{1}{Z_n^w} w(k_0) \prod_{i=1}^{n-1} (k_i+1) w(k_i+1).$$
Since $\Q_n^w$ and ${\rm BGW}^{\mu_\varnothing, \mu}_n$ are both probability measures on $\T_n$, we conclude that  we have the identity $c a^{n-1} b^{n-1}/\mathrm{BGW}^{\mu_\varnothing, \mu}(\T_{n}) = 1/Z_n^w$ and the claim follows.
\end{proof}

\subsection{Largest subtree of the root of large modified {\GW} trees}
\label{sec:subtree}

Finally, Theorem \ref{thm:convergence_arbres_SG} will readily follow from the proof of Theorem \ref{thm:convergence_plongement_uniforme} and the next convergence, which extends Duquesne's theorem \eqref{eq:Duq} to modified Bienaymé--Galton--Watson trees.

\begin{thm}\label{thm:convergence_contour_arbre_SG}
Fix $\alpha \in (1,2]$. Let $\mu_\varnothing$ be a probability measure on $\N$ with finite mean and $\mu$ a probability measure on $\Z_+$ which is critical and belongs to the domain of attraction of a stable law with index $\alpha$. For every integer $n \ge 1$, sample $\CRT_n$ according to ${\rm BGW}^{\mu_\varnothing, \mu}_n$ (provided that ${\rm BGW}^{\mu_\varnothing, \mu}_n$  is well defined). Then \begin{equation}
\left(\frac{1}{B_n} \W_{\lfloor ns \rfloor}(\CRT_n) : s \in [0,1]\right)  \quad \mathop{\longrightarrow}_{n \rightarrow \infty} \quad  (\X(s) : s \in [0,1]),
\end{equation}
where the convergence holds in distribution in the space $\D([0, 1], \R)$ and where $(B_{n})_{n \geq 1}$ is the same sequence as in \eqref{eq:Duq}. \end{thm}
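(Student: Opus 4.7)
The plan is to reduce the modified {\GW} tree $\CRT_n$, conditioned to have $n$ vertices, to a standard {\GW}$(\mu)$ tree conditioned to have $n - O_{\P}(1)$ vertices, and then invoke Duquesne's theorem \eqref{eq:Duq}. Concretely, I will show that the root of $\CRT_n$ has a bounded number of children and that all but one of the subtrees dangling from it are of bounded total size; the remaining ``giant'' subtree is a standard {\GW}$(\mu)$ tree conditioned to have $n - O_{\P}(1)$ vertices, and its {\L}ukasiewicz path coincides with that of $\CRT_n$ on all but a vanishing part of the time interval, up to a bounded additive shift.

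More precisely, let $K_n = k_\varnothing(\CRT_n)$, and on $\{K_n = k\}$ let $T^{(1)}, \dots, T^{(k)}$ denote the subtrees at the root, with sizes $n_1, \dots, n_k$. Kemperman's cyclic lemma gives $\P(|T_1| + \cdots + |T_k| = m) = (k/m)\P(W_m = -k)$, where $(T_i)_i$ are i.i.d.\ standard {\GW}$(\mu)$ trees and $W$ is the random walk with step law $\mu - 1$. Combined with the local limit theorem $\P(W_m = -k) \sim c/B_m$ (valid since $\mu$ lies in the domain of attraction of an $\alpha$-stable law), this yields
\begin{equation*}
\P(|\CRT_n| = n, K_n = k) \sim \frac{c\, k\, \mu_\varnothing(k)}{n B_n}
\end{equation*}
for every fixed $k$. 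Since $\mu_\varnothing$ has finite mean, dominated convergence based on a uniform bound of the form $\P(W_m = -k) \le C/B_m$ gives $\P(|\CRT_n|=n) \sim c\, m_\varnothing/(nB_n)$, where $m_\varnothing = \sum_{j \ge 1} j\mu_\varnothing(j)$, and hence
\begin{equation*}
\P(K_n = k \mid |\CRT_n| = n) \;\xrightarrow[n\to\infty]{}\; \frac{k\,\mu_\varnothing(k)}{m_\varnothing};
\end{equation*}
in particular, $(K_n)_{n \ge 1}$ is tight.

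Next, conditional on $K_n = k$, the sizes $(n_1, \dots, n_k)$ are distributed as $k$ i.i.d.\ copies of $|T|$ biased by the event $\sum_i |T_i| = n-1$. The asymptotics $\P(|T| = m) \sim c'/(m B_m)$ show that the law of $|T|$ is regularly varying, hence subexponential, so a standard local one-big-jump argument yields that the maximizing index $I_n = \operatorname{argmax}_i n_i$ is well-defined with high probability, that $R_n \coloneqq n - 1 - n_{I_n} = \sum_{i \ne I_n} n_i$ is tight, and even that $R_n$ converges in distribution to an almost surely finite random variable. Using the root-decomposition of the {\L}ukasiewicz path,
\begin{equation*}
\W_{1 + n_1 + \cdots + n_{i-1} + m}(\CRT_n) = (K_n - i) + \W_m(T^{(i)}) \qquad (1 \le i \le K_n,\; 0 \le m \le n_i),
\end{equation*}
we see that on the time interval $[a_n, a_n + n_{I_n}]$ with $a_n = 1 + \sum_{i < I_n} n_i$ the path of $\CRT_n$ equals that of $T^{(I_n)}$ up to the additive constant $K_n - I_n$, and both $a_n$ and $n - a_n - n_{I_n}$ are tight.

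Finally, conditionally on its size, $T^{(I_n)}$ is a standard {\GW}$(\mu)$ tree conditioned to have $n_{I_n}$ vertices. Since $n_{I_n}/n \to 1$ in probability and $(B_n)$ is regularly varying of index $1/\alpha$, we have $B_{n_{I_n}}/B_n \to 1$, so Duquesne's theorem \eqref{eq:Duq} applied to $T^{(I_n)}$ gives the Skorokhod convergence $B_n^{-1}\W_{\lfloor n_{I_n} s\rfloor}(T^{(I_n)}) \to \X(s)$ on $[0,1]$. Composing with the uniformly near-identity time change $s \mapsto (ns - a_n)/n_{I_n}$ and absorbing (i) the $O_{\P}(1)$ additive shift, which is negligible after dividing by $B_n \to \infty$, and (ii) the $o_{\P}(1)$-length intervals $[0, a_n/n]$ and $[(a_n + n_{I_n})/n, 1]$ at both ends, on which $B_n^{-1}\W_{\lfloor n\cdot \rfloor}(\CRT_n)$ is uniformly small by the tightness of $K_n$ and $R_n$, we obtain the desired convergence in $\D([0,1],\R)$. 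The main obstacle is the local one-big-jump estimate in the previous paragraph, which requires a careful joint local limit analysis of $(n_1,\dots,n_k)$ under the sum conditioning; once it is in hand, the rest is a routine manipulation in Skorokhod topology.
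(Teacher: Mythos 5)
Your approach is correct and matches the paper's: both isolate the giant subtree of the root, of size $n - O_{\P}(1)$, as a $\mathrm{BGW}^\mu$ tree conditioned on its size, apply Duquesne's theorem \eqref{eq:Duq} to it, and absorb the $O_{\P}(1)$ contribution from the root degree and the remaining small subtrees via standard Skorokhod-topology arguments. The local one-big-jump step you flag as the main obstacle is exactly what the paper carries out in Lemma~\ref{lem:maintool}\,(iii) and Proposition~\ref{prop:plus_grand_sous_arbre_GW_modifie}, by the same means you anticipate: exchangeability of the forest, Kemperman's formula \eqref{eq:Kemperman}, and the local limit theorem \eqref{eq:thm_limite_locale}.
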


Marckert \& Panholzer \cite{MP02} obtained this limit theorem in the case where $\mu_\varnothing$ and $\mu$ are given by \eqref{eq:lois_de_reproduction_cas_uniforme}. We follow the same approach in the general case, which roughly speaking consists in comparing ${\rm BGW}^{\mu_\varnothing, \mu}_n$ and ${\rm BGW}^\mu_n$. However, Marckert \& Panholzer crucially use the fact that the support of $\mu$ and that of $\mu_\varnothing$ differ only at $0$. This is not the case when $\mu_\varnothing$ and $\mu$ are given by \eqref{eq:lois_de_reproduction_cas_general} as soon as $w(k) = 0$ for some $k \ge 1$, so some care is needed (see Remark \ref{rem:diff}). Our approach also gives a limit theorem for the size of the maximal subtree grafted on the root of a size-conditioned (possibly modified) Bienaymé--Galton--Watson tree.

We start by proving Theorem \ref{thm:convergence_arbres_SG}, assuming that Theorem \ref{thm:convergence_contour_arbre_SG} holds.

\begin{proof}[Proof of Theorem \ref{thm:convergence_arbres_SG}]
Define $\mu$ and $\mu_\varnothing$ by \eqref{eq:lois_de_reproduction_cas_general}, so that  the shape of $\mathscr{T}_n$ has law ${\rm BGW}^{\mu_\varnothing, \mu}_n$ by Theorem \ref{thm:arbre_SG_presque_GW}. In addition, the proof of Theorem \ref{thm:arbre_SG_presque_GW} also shows that conditionally given the shape $S(\mathscr{T}_n)$, the random variable $C( \mathscr{T}_{n})$ is uniformly distributed on the set of all its possible values. Under the assumption of Theorem \ref{thm:convergence_arbres_SG}, $\mu$ is critical and in the domain of attraction of a stable law of index $\alpha$. Since $\mu_\varnothing$ has finite mean by \eqref{eq:loi_racine_moyenne_finie}, we can apply Theorem \ref{thm:convergence_contour_arbre_SG}  and  conclude as in the proof of Theorem \ref{thm:convergence_plongement_uniforme}.
\end{proof}

\begin{rem}\label{rem:weights}If $k \mapsto w(k+1)$ is a critical probability distribution on $\Z_{+}$ belonging to the domain of attraction of a stable law of index $\alpha \in (1,2)$, a simply generated noncrossing tree with weights $w$ will converge to the Brownian triangulation (and its shape to the Brownian CRT), but a simply generated plane tree with weights $w$ will converge, appropriately rescaled, to the $\alpha$-stable random tree, and embedded in a uniform manner it will converge to the uniform $\alpha$-stable triangulation.
\end{rem}

We fix for the following $\mu_\varnothing$ a probability measure on $\N$ with finite mean and $\mu$ a probability measure on $\Z_+$ which is critical and belongs to the domain of attraction of a stable law with index $\alpha \in (1,2]$. We further assume that $\mu$ is aperiodic to avoid unnecessary complications, meaning that $ \gcd \{i>0 : \mu(i)>0\}=1$ so that ${\rm BGW}^{ \mu}( |\mathcal{T}|=n)>0 $ for every $n$ sufficiently large. The key estimate is the following, which may be of independent interest.

\begin{prop}\label{prop:plus_grand_sous_arbre_GW_modifie}
Denote by $M(\tau)$ the size of the largest subtree of the root of a plane tree $\tau$. Let $ \mathcal{N}$ be a random variable with law given by
$$\Pr{ \mathcal{N}=k } = \frac{k \mu_\varnothing(k)}{\sum_{j \ge 1} j \mu_\varnothing(j)}
\qquad (k \geq 1)$$ and let $(Y_{i})_{i \geq 1}$ be an independent sequence of i.i.d.~random variables having the law of the total size of a ${\rm BGW}^{\mu}$ tree.  Then, for every $k \geq 0$ and $L \geq 1$,
\begin{equation}
{\rm BGW}^{\mu_\varnothing, \mu}_n(n-1-M  = k,N_{0}=L)  \quad \mathop{\longrightarrow}_{n \rightarrow \infty} \quad \Pr{Y_{1}+Y_{2}+ \cdots+Y_{ L-1}=k, \mathcal{N}=L }.
\end{equation}
\end{prop}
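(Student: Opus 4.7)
The plan is to compute the asymptotics of the numerator and denominator of the conditional probability separately, using the classical tail asymptotic for the size of a critical $\mathrm{BGW}^\mu$ tree together with the one-big-jump principle for subexponential i.i.d.\ sums.

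I would first apply the branching property to decompose the numerator exactly. Conditionally on $N_0 = L$, the root has $L$ subtrees whose sizes $Y_1, \dots, Y_L$ are i.i.d.\ distributed as $|\CRT|$ under $\mathrm{BGW}^\mu$. Setting $S_{L-1} = Y_1 + \cdots + Y_{L-1}$, the event $\{|\CRT|=n,\, n-1-M = k,\, N_0=L\}$ is decomposed by picking which of the $L$ subtrees attains the maximum. A tied maximum would force two subtrees of size $n-1-k$ and hence $2(n-1-k) \le n-1$, which is impossible as soon as $n > 2k+1$. For such $n$, the condition $\max_{i<L} Y_i \le n-1-k$ is automatic (since $\sum_{i<L} Y_i = k < n-1-k$), so by symmetry
\begin{equation*}
\mathrm{BGW}^{\mu_\varnothing,\mu}(|\CRT|=n,\, n-1-M=k,\, N_0=L)
= L\,\mu_\varnothing(L)\, P(Y = n-1-k)\, P(S_{L-1} = k)
\end{equation*}
holds exactly for all $n$ large enough depending on $k$.

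Next I would invoke the classical tail asymptotic for $Y = |\CRT|$ under $\mathrm{BGW}^\mu$. The Otter--Dwass identity $P(Y = n) = \frac{1}{n}\, P(W_n = -1)$, where $(W_n)$ is a centered random walk with step law $k \mapsto \mu(k+1)$, combined with the local limit theorem for walks in the domain of attraction of a stable law of index $\alpha$ (using the aperiodicity assumption on $\mu$), gives $P(Y=n) \sim c/(n B_n)$ with $B_n$ the scaling sequence of \eqref{eq:Duq}. This tail is regularly varying of index $-(1+1/\alpha)$, hence in particular $P(Y = n-1-k)/P(Y=n) \to 1$ as $n\to\infty$ for each fixed $k$, and $Y$ is subexponential.

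For the denominator I would write $\mathrm{BGW}^{\mu_\varnothing,\mu}(|\CRT|=n) = \sum_{L\ge 1}\mu_\varnothing(L)\, P(S_L = n-1)$ and apply the one-big-jump principle: for each fixed $L$, $P(S_L=n) \sim L\,P(Y=n)$. The main technical obstacle is to pass the limit through the sum in $L$; this requires a uniform Nagaev-type domination $P(S_L=n) \le C L\, P(Y=n)$ valid for all $L\ge 1$ and all $n$ sufficiently large, which is available for subexponential distributions (e.g.\ via the Denisov--Dieker--Shneer inequality). Since $\sum_L L\mu_\varnothing(L) = E[N_0] < \infty$ by assumption, dominated convergence then yields
\begin{equation*}
\mathrm{BGW}^{\mu_\varnothing,\mu}(|\CRT|=n) \;\sim\; E[N_0]\cdot P(Y=n).
\end{equation*}
Taking the ratio of the exact numerator with this denominator and using $P(Y=n-1-k)\sim P(Y=n)$ produces the limit
\begin{equation*}
\frac{L\mu_\varnothing(L)}{E[N_0]}\, P(S_{L-1}=k)
= P(\mathcal{N}=L)\, P(Y_1+\cdots+Y_{L-1}=k),
\end{equation*}
which is the claim. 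The only delicate point is the uniform subexponential bound enabling dominated convergence in $L$; everything else reduces to the classical heavy-tailed asymptotics recalled above (with the case $L=1$ handled by the convention $P(S_0=0)=1$).
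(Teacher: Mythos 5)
Your proof is correct, and it is essentially the same argument as the paper's, just packaged differently. The paper factors the computation through Lemma \ref{lem:maintool}: part (ii) gives the uniform convergence of ${\rm BGW}^{\mu_\varnothing,\mu}_n(N_0 = L)$ to $P(\mathcal{N}=L)$ and part (iii) gives the convergence in law of $n-1-M_{n-1}^{\mu,L}$ to $Z_{L-1}$; the proposition is then the product of these two limits. You instead decompose the joint event directly into the exact formula
\begin{equation}
L\,\mu_\varnothing(L)\,P(Y=n-1-k)\,P(S_{L-1}=k)
\end{equation}
(valid for $n > 2k+1$) and divide by the partition function asymptotic ${\rm BGW}^{\mu_\varnothing,\mu}(|\mathcal{T}|=n) \sim E[N_0]\,P(Y=n)$, which is exactly Lemma \ref{lem:maintool} (i). The ``one-big-jump'' symmetry argument you use for the numerator is precisely what the paper does inside the proof of Lemma \ref{lem:maintool} (iii), and your denominator expansion $\sum_L \mu_\varnothing(L)\,P(S_L=n-1)$ is identical, via Kemperman, to the paper's $\sum_k \mu_\varnothing(k)\,\tfrac{k}{n-1}P(S_{n-1}=-k)$. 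The one place where you reach for heavier machinery than needed is the uniform domination $P(S_L=n)\le C\,L\,P(Y=n)$: you invoke a Nagaev/Denisov--Dieker--Shneer-type subexponential bound, but Kemperman's formula plus the local limit theorem already gives $P(S_L=n)=\tfrac{L}{n}P(W_n=-L)\le L\cdot\tfrac{\|p_1\|_\infty+o(1)}{nB_n}$ uniformly in $L$, which is how the paper justifies the interchange of sum and limit. So the argument is sound and matches the paper in substance; the only stylistic differences are the direct merged presentation and the appeal to general subexponential-tail estimates in place of the explicit LLT bound.
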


Note that this implies that for every $k \geq 0$,  $${\rm BGW}^{\mu_\varnothing, \mu}_n(n-1-M  = k)  \quad \mathop{\longrightarrow}_{n \rightarrow \infty} \quad \Pr{Y_{1}+Y_{2}+ \cdots+Y_{ \mathcal{N}-1}=k}.$$
In particular,
{under ${\rm BGW}^{\mu_\varnothing, \mu}_n$, $M/n \to 1$ in probability as $n \to \infty$}, which was proved by Marckert \& Panholzer when  $\mu_\varnothing$ and $\mu$ are given by \eqref{eq:lois_de_reproduction_cas_uniforme}. Note also that this result covers the case of Bienaymé--Galton--Watson trees by taking $\mu_{\varnothing}=\mu$.

We establish Proposition \ref{prop:plus_grand_sous_arbre_GW_modifie} in several steps and first introduce some notation. Let $S=(S_{n})_{n \geq 0}$ be the random walk started from $0$ with step distribution $(\mu(k+1) : k \ge -1)$. Observe that $S$ is an aperiodic centered random walk with step distribution in the domain of attraction of a stable law with index $\alpha$. Recall the spectrally positive L{\'e}vy process $X_\alpha$ introduced in Sec.~\ref{sec:planetrees} and denote by $p_1$ the density of $X_\alpha(1)$; the latter is known to be positive, continuous and bounded (see e.g. Zolotarev \cite[I. 4]{Zol86}). We will use the local limit theorem (see Ibragimov \& Linnik \cite[Theorem 4.2.1]{IL71}), which tells us that 
\begin{equation}\label{eq:thm_limite_locale}
\sup_{k \in \Z} \big|B_n \P(S_n = k) - p_1(B_n^{-1} k)\big| \cv 0.
\end{equation}
For every $k \geq 1$, denote by $T_{-k}$ the first hitting time of $-k$ by the random walk $(S_{n})_{n \geq 0}$. We will need Kemperman’s formula, which states that 
\begin{equation}\label{eq:Kemperman} \Pr{T_{-k}=n}= \frac{k}{n} \cdot \Pr{S_{n}=-k}
\end{equation}
for every $k \geq 1$ and $n \geq 1$ (see e.g.  \cite[Chap. 6]{Pit06}). In particular,  the total size $Y_{1}$ of a ${\rm BGW}^{\mu}$ tree  belongs to the domain of attraction of a stable law of index $1/\alpha$, since $\Pr{Y_{1}=n} = \Pr{T_{-1}=n}= \frac{1}{n} \Pr{S_{n}=-1} \sim (nB_{n})^{-1} p_{1}(0)$ as $n \to \infty$.

The main tool to prove prove Proposition \ref{prop:plus_grand_sous_arbre_GW_modifie} is the following Lemma.

\begin{lem}\label{lem:maintool}~
\begin{enumerate}
\item[(i)] We have
$${\rm BGW}^{\mu_\varnothing, \mu}( |\mathcal{T}| =n)  \quad \mathop{\sim}_{n \rightarrow \infty} \quad |\Gamma(-1/\alpha)|^{-1} \cdot \left(\sum_{k \ge 1} k \mu_\varnothing(k) \right)  \cdot \frac{1}{n \cdot B_{n}}.$$
\item[(ii)] Denote by $N_0(\tau)$ the number of children of the root of a plane tree $\tau$. We have
\begin{equation}\label{eq:convergence_degre_racine}
{\rm BGW}^{\mu_\varnothing, \mu}_n(N_0=k) \cv \frac{k \mu_\varnothing(k)}{\sum_{j \ge 1} j \mu_\varnothing(j)}
\qquad\text{uniformly in }k.
\end{equation}
\item[(iii)] Fix $k \ge 1$; for every $n \ge k$, consider a forest of $k$ independent {\GW} trees with offspring distribution $\mu$, conditioned to have total size $n$ and denote by $M_{n}^{\mu,k}$ the size of the largest tree. Then, as $n \rightarrow \infty$, $n-M_{n}^{\mu,k}$ converges in distribution to the total size of $k-1$ independent {\GW} trees with offspring distribution $\mu$.
\end{enumerate}
\end{lem}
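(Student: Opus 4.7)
The three parts share a common backbone: decompose the modified Bienaymé--Galton--Watson tree at the root into a forest of ordinary BGW($\mu$) trees, apply Kemperman's formula \eqref{eq:Kemperman} to convert forest size probabilities into random walk increments, and then invoke the local limit theorem \eqref{eq:thm_limite_locale}.

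For part (i), I would first write, by splitting on the root degree and using that a forest of $k$ independent BGW($\mu$) trees has total size $n-1$ if and only if $T_{-k}=n-1$,
\begin{equation}
{\rm BGW}^{\mu_\varnothing,\mu}(|\mathcal{T}|=n) = \sum_{k\ge 1}\mu_\varnothing(k)\,\Pr{T_{-k}=n-1} = \sum_{k\ge 1}\mu_\varnothing(k)\,\frac{k}{n-1}\Pr{S_{n-1}=-k}.
\end{equation}
The uniform local limit theorem gives $B_{n-1}\Pr{S_{n-1}=-k} = p_1(B_{n-1}^{-1}k) + o(1)$ uniformly in $k$, hence the summand behaves as $\frac{k\mu_\varnothing(k)}{(n-1)B_{n-1}}(p_1(B_{n-1}^{-1}k) + o(1))$. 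For each fixed $k$ the inner factor tends to $p_1(0)=|\Gamma(-1/\alpha)|^{-1}$, and since $p_1$ is bounded while $\sum_k k\mu_\varnothing(k)<\infty$, dominated convergence justifies interchanging sum and limit, yielding the stated asymptotic.

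For part (ii), the identity
\begin{equation}
{\rm BGW}^{\mu_\varnothing,\mu}_n(N_0=k) = \frac{\mu_\varnothing(k)\,\Pr{T_{-k}=n-1}}{{\rm BGW}^{\mu_\varnothing,\mu}(|\mathcal{T}|=n)}
\end{equation}
combined with part (i) shows pointwise convergence of ${\rm BGW}^{\mu_\varnothing,\mu}_n(N_0=\cdot)$ towards the probability measure $k\mapsto k\mu_\varnothing(k)/\sum_j j\mu_\varnothing(j)$. Since both sides are probability measures on $\N$, Scheffé's lemma upgrades pointwise convergence to total variation, hence uniform, convergence.

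For part (iii), let $Y_1,\dots,Y_k$ be i.i.d.~with the law of $|\mathcal{T}|$ under ${\rm BGW}^\mu$. Fix $m\ge 0$; for $n>2m$ the event $\{\sum_i Y_i=n,\ n-M_n^{\mu,k}=m\}$ forces exactly one $Y_i$ to equal $n-m$ (this being strictly larger than the sum $m$ of the others), so by exchangeability
\begin{equation}
\Pr{\textstyle\sum_i Y_i=n,\ n-M_n^{\mu,k}=m} = k\,\Pr{Y_1=n-m}\,\Pr{Y_2+\cdots+Y_k=m}.
\end{equation}
Dividing by $\Pr{\sum_i Y_i=n}=\Pr{T_{-k}=n}=\frac{k}{n}\Pr{S_n=-k}$, using $\Pr{Y_1=n-m}=\frac{1}{n-m}\Pr{S_{n-m}=-1}$, and applying \eqref{eq:thm_limite_locale} to both $\Pr{S_{n-m}=-1}$ and $\Pr{S_n=-k}$ (each $\sim p_1(0)/B_n$), the $n$-dependent factors cancel and the conditional probability converges to $\Pr{Y_2+\cdots+Y_k=m}$, as desired.

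The main obstacles I expect are dominated convergence in (i), which relies crucially on the boundedness of $p_1$ and the finite mean of $\mu_\varnothing$, and the tie-breaking bookkeeping in (iii): the identity with the factor $k$ holds cleanly only for $n>2m$, so one must treat the fixed-$m$, large-$n$ regime; separately, one should control the probability that two $Y_i$'s are simultaneously large, which is negligible thanks to the subexponential tails of $Y_1$ inherited (via Kemperman) from $S_n$.
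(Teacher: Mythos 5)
Your proof follows the same overall strategy as the paper for all three parts: decompose the modified tree at the root into a forest of ordinary $\mathrm{BGW}^\mu$ trees, convert forest-size probabilities into hitting-time probabilities for the associated left-continuous random walk via Kemperman's formula, and then apply the local limit theorem. Parts (i) and (iii) are essentially the paper's own argument, including the use of exchangeability to pull out the factor $k$ in (iii). For part (ii) you obtain the same pointwise limit, but you upgrade to uniformity differently: you invoke Scheff\'e's lemma (pointwise convergence of probability mass functions on $\N$ to a probability mass function gives total variation, hence uniform, convergence), whereas the paper does a direct tail split, choosing $K$ with $\sum_{j\ge K} j\mu_\varnothing(j)<\varepsilon$, handling $k<K$ by the pointwise result and bounding the supremum over $k\ge K$ using the boundedness of $p_1$. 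Both routes are valid; yours is a bit more economical and sidesteps the explicit estimate. One minor remark on (iii): your threshold $n>2m$ is already sufficient for the exchangeability identity (the paper's $n>4m$ is a looser, equally valid choice), and the closing worry about two simultaneously large $Y_i$'s is not actually needed, since the argument fixes $m$ and lets $n\to\infty$, so once $n>2m$ the disjoint-union decomposition is exact and no error term arises; the limit is a proper distribution on $\Z_+$, so pointwise convergence of the mass function at each fixed $m$ already yields convergence in distribution.
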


In particular, with the notation of (iii), $\frac{1}{n} M_{n}^{\mu,k} \rightarrow 1$ in probability as $n \rightarrow \infty$.

\begin{proof}
Observe that under ${\rm BGW}^{\mu_\varnothing, \mu}$, the {\L}ukasiewicz path associated with the tree is distributed as a random walk issued from $0$, with first step distributed as $(\mu_\varnothing(k+1) : k \ge 0)$ and the next ones as $(\mu(k+1) : k \ge -1)$, stopped at its first hitting time of $-1$. As a consequence, by decomposing the {\L}ukasiewicz path after the first step, for every $k \ge 1$ we have:
\begin{equation}\label{eq:size}
{\rm BGW}^{\mu_\varnothing, \mu}( |\mathcal{T}| =n)= \sum_{k=1}^{n-1} \mu_{\varnothing}(k) \cdot \Pr{T_{-k} =n-1}=  \sum_{k=1}^{n-1} \mu_{\varnothing}(k) \cdot \frac{k}{n-1} \Pr{S_{n-1}=-k},
\end{equation}
where  we have used Kemperman's formula for the last equality. Next note that for every fixed $k \geq 1$, we have
\begin{equation}
k \mu_\varnothing(k) B_{n-1} \P(S_{n-1} = -k) = k \mu_\varnothing(k)  \left( p_1(-B_{n-1}^{-1} k) + o(1) \right)  \cv k \mu_\varnothing(k) p_1(0),
\end{equation}
where the $o(1)$ is uniform in $k$. Since $\sum_{k \ge 1} k \mu_\varnothing(k) < \infty$ and $p_1$ is bounded, the above convergence yields also
\begin{equation}\label{eq:estimee}
\sum_{k \ge 1} k \mu_\varnothing(k) B_{n-1} \P(S_{n-1} = -k) \cv \sum_{k \ge 1} k \mu_\varnothing(k) p_1(0).
\end{equation}
Then (i) follows since $p_{1}(0)=|\Gamma(-1/\alpha)|^{-1}$ (see \cite[Lemma XVII.6.1]{Fel71}) and the fact that $(B_n)$ is regularly varying with index $1/\alpha$ which implies that $B_{n-1}/B_{n} \rightarrow 1$ as $n \rightarrow \infty$.

We now establish (ii). As in the proof of (i), also using \eqref{eq:size}, we have
\begin{equation}
{\rm BGW}^{\mu_\varnothing, \mu}_n(N_0=k)
= \frac{\mu_\varnothing(k) \P(T_{-k} = n-1)}{{\rm BGW}^{\mu_\varnothing, \mu}( |\mathcal{T}| =n)}
= k\mu_\varnothing(k)  \cdot \frac{B_{n}{\Pr{S_{n-1}=k}} }{(n-1) B_{n}{\rm BGW}^{\mu_\varnothing, \mu}( |\mathcal{T}| =n)}
\end{equation}
By (i) and the local limit theorem,  the convergence in \eqref{eq:convergence_degre_racine} therefore holds for every $k$ fixed. To obtain a uniform convergence, fix any $\varepsilon > 0$ and let $K \ge 1$ be such that $\sum_{j \ge K} j \mu_\varnothing(j) < \varepsilon$. Then
\begin{equation}
\sup_{1 \le k \le K} \big|k \mu_\varnothing(k) B_n \P(S_{n-1} = -k) - p_1(0) k \mu_\varnothing(k)\big| \cv 0,
\end{equation}
and, from \eqref{eq:thm_limite_locale},
\begin{equation}
\sup_{k \ge K} \big|k \mu_\varnothing(k) B_n \P(S_{n-1} = -k) - p_1(0) k \mu_\varnothing(k)\big|
\le \varepsilon (2\|p_1\| + o(1)),
\end{equation}
which establishes (ii).

We finally prove (iii). Let $\CRT_1, \dots, \CRT_k$ be $k$ independent {\GW} trees with offspring distribution $\mu$. To simplify notation, set $Z_{j}=\sum_{i=1}^j |\CRT_i|$ for $1 \leq j \leq k$. Fix $m \geq 0$. Note that, for $n>4m$,
$$\left\{\sup_{1 \le i \le k} |\CRT_i|=n-m, Z_{k}=n \right\} = \bigcup_{i=1}^{k} \left\{ |\CRT_i|=n-m, Z_{k}=n  \right\},$$
where the union is taken on disjoint events. As a consequence, by exchangeability of the vector $(|\CRT_1|, \dots, |\CRT_k|)$ under the conditional distribution $\P(\ \cdot \mid Z_{k} = n)$, we have
\begin{align*}
\P\bigg(\sup_{1 \le i \le k} |\CRT_i| = n-m \biggm| Z_{k} = n\bigg)
&= \sum_{i=1}^k \P(|\CRT_i| = n-m \mid Z_{k} = n)
\\
&= k \cdot \P(Z_1 = n-m \mid Z_{k} = n).
\end{align*}
Next, we have, for $n>4m$,
\begin{equation}
k \cdot \P(Z_{1} = n-m \mid Z_{k} = n)
= k \cdot \frac{\P(Z_{1} = n-m) \P(Z_{k-1} = m)}{\P(Z_{k} = n)}.
\end{equation}
Since $Z_{k}$ has the same law as the first hitting time of $-k$ by the random walk $S$, Kemperman’s formula yields
\begin{equation}
k \cdot \frac{\P(Z_{1} = n-m)}{\P(Z_{k} = n)}
= k \cdot\frac{\frac{1}{n-m} \P(S_{n-m} = -1)}{\frac{k}{n} \P(S_{n} = -k)}
\cv 1,
\end{equation}
where the convergence follows from the local limit theorem \eqref{eq:thm_limite_locale} and $B_{n-m}/B_{n} \to 1$. It follows that
$$\P\bigg(\sup_{1 \le i \le k} |\CRT_i| = n-m \biggm| Z_{k} = n\bigg)  \quad \mathop{\longrightarrow}_{n \rightarrow \infty} \quad \P(Z_{k-1} = m).$$
This completes the proof. 
\end{proof}

We finally prove Proposition \ref{prop:plus_grand_sous_arbre_GW_modifie} and Theorem \ref{thm:convergence_contour_arbre_SG}.

\begin{proof}[Proof of Proposition \ref{prop:plus_grand_sous_arbre_GW_modifie}] As in the proof of Lemma \ref{lem:maintool}, let $(\CRT_i)_{i \geq 1}$ be a sequence of independent {\GW} trees with offspring distribution $\mu$ and set $Z_{j}=\sum_{i=1}^j |\CRT_i|$ for every $j \geq 1$. Then observe that for every $i \ge 1$ fixed, under the conditional distribution ${\rm BGW}^{\mu_\varnothing, \mu}_n(\ \cdot \mid N_0=i)$, the $i$ subtrees of the root are distributed as a forest of $i$ independent {\GW} trees with the same offspring distribution $\mu$, conditioned to have total size $n-1$. Therefore, with the notation of Lemma \ref{lem:maintool}, for every $L \geq 1$ and $k \geq 0$,
\begin{eqnarray*}
{\rm BGW}^{\mu_\varnothing, \mu}_n(M =  n-1-k, N_{0} = L)
&=&  {\rm BGW}^{\mu_\varnothing, \mu}_n(M =  n-1-k\mid N_0=L) \cdot {\rm BGW}^{\mu_\varnothing, \mu}_n(N_0=L)
\\
&= & \P(M_{n-1}^{\mu,L} =  n-1-k) \cdot {\rm BGW}^{\mu_\varnothing, \mu}_n(N_0=L) \\
&   \displaystyle \mathop{\longrightarrow}_{n \rightarrow \infty} &  \P(Z_{L-1} = k) \cdot \Pr{ \mathcal{N}=L},
\end{eqnarray*}
where we have used Lemma \ref{lem:maintool} (ii) and (iii) for the last step. This completes the proof.
\end{proof}

\begin{rem}\label{rem:diff}In order to prove that {under ${\rm BGW}^{\mu_\varnothing, \mu}_n$, $M/n \to 1$ in probability as $n \to \infty$} when $\mu_\varnothing$ and $\mu$ are given by \eqref{eq:lois_de_reproduction_cas_uniforme}, Marckert \& Panholzer crucially use the fact that for every $k \geq 1$, conditionally given $N_{0}=k$, the laws ${\rm BGW}^{\mu_\varnothing, \mu}_n$ and ${\rm BGW}^{\mu}_n$ are the same. However, in the general case, $\mu_\varnothing$ and $\mu$ may have different supports. For this reason, we use an additional idea which consists in estimating the size of the largest tree in a forest of Bienaymé--Galton--Watson trees (Lemma \ref{lem:maintool} (iii)) and which also {allows us} to obtain a joint convergence in distribution in Proposition \ref{prop:plus_grand_sous_arbre_GW_modifie}.
\end{rem}

\begin{proof}[Proof of Theorem \ref{thm:convergence_contour_arbre_SG}]
We see from Proposition \ref{prop:plus_grand_sous_arbre_GW_modifie} that under ${\rm BGW}^{\mu_\varnothing, \mu}_n$, with probability tending to $1$ as $n \to \infty$,  the root has one subtree, say $\tau_{n}$, of size $M_{n} = n-o(n)$. Furthermore, conditional on $M_{n}$, this subtree is distributed as ${\rm BGW}^\mu_{M_{n}}$. We conclude from \eqref{eq:Duq}  that its associated rescaled {\L}ukasiewicz path $ ({B_{M_{n}}^{-1}} \W_{\lfloor M_{n} s \rfloor}(\tau_{n}),s \in [0,1])$
converges in distribution towards to $(\X(s) : s \in [0,1])$ as $n \rightarrow \infty$. Since all the other subtrees have total size $o(n)$ with high probability, their contribution does not affect the limit by standard properties of the Skorokhod topology, and the claim follows.
\end{proof}

\begin{rem}
As in \cite[Sec. 3.4]{MP02}, under the assumptions of Theorem \ref{thm:convergence_contour_arbre_SG}, we have in fact the joint convergence in distribution of the {rescaled} {\L}ukasiewicz path, the height process and the contour process of the trees to $(\X,\H,\H)$. Indeed, more than \eqref{eq:Duq}, Duquesne \cite{Du03} obtained this convergence for (non-modified) conditioned Bienaymé--Galton--Watson trees and the above argument extends verbatim. A consequence is for example that the height of the shape of $\T_{n}$ is of order $n/B_{n}$.
\end{rem}

\subsection{Application to degree-constrained noncrossing trees}

Our goal is now to prove Theorem \ref{thm:enumA}. Recall that $\NC_{n}^{\mathcal{A}}$ is the set of all noncrossing trees having $n$ vertices and with degrees only belonging to $ \mathcal{A} \subset \N$. Recall also from Sec.~\ref{sec:codings} the notation $\mathbb{C}(\tau)$ for a plane tree $\tau$ and the bijection $\Phi_{n}$ between $\NC_{n}$ and $\T_{n}^{\mathsf{dec}}$. We first introduce some notation. 
Denote by $\T_{n}^{ \mathcal{A} }$ the set of all plane trees having $n$ vertices and with degrees only belonging to $ \mathcal{A}$ and set $\T_{n}^{ \mathcal{A} ,\mathsf{dec}}= \{ (\tau, \mathbf{c}) \in \T_{n}^{\mathsf{dec}} : \tau \in \T_{n}^{ \mathcal{A} }\} $. It is clear that $\Phi_{n}$ also yields a bijection between $\NC_{n}^{\mathcal{A}}$ and $\T_{n}^{ \mathcal{A} ,\mathsf{dec}}$.

\begin{proof}[Proof of Theorem \ref{thm:enumA}] It is clear that $ \mathcal{A} \neq \{1\}$, otherwise  $\NC_{n}^{\mathcal{A}}  = \varnothing$ for every $n \geq 2$.  
We first construct a  uniform  element of $\NC_{n}^{\mathcal{A}}$ as follows. Set $w(k)= \mathbb{1}_{k \in \mathcal{A} }$. Recalling the definition of  $\Psi$ in \eqref{eq:Psi}, we have 
$$\Psi(1)=\frac{\sum_{k \in \mathcal{A}, k>1 } (k-1) k }{1+\sum_{k \in \mathcal{A},k>1} k}.$$ 
Then note that
$$\sum_{k \in \mathcal{A}, k>1 } (k-1) k-\sum_{k \in \mathcal{A},k>1} k= \sum_{k \in \mathcal{A}, k>1}k(k-2)>1,$$
since $ \mathcal{A} \neq \{1,2\}$. As a consequence, there exists $b \in (0,1)$ such that \eqref{eq:defb} holds, and we can consider   the probability measures $\mu^{ \mathcal{A} }$ and $\mu_{\varnothing}^{ \mathcal{A} }$ given by Theorem \ref{thm:arbre_SG_presque_GW}. More precisely, $$\mu ^{ \mathcal{A}}(k)= a (k+1) b^k \mathbbm{1}_{k+1 \in \mathcal{A} }, \qquad \mu^{ \mathcal{A} }_\varnothing(k)= cb^{k} \mathbbm{1}_{ k \in \mathcal{A} },$$
with $a=(\sum_{i+1 \in \mathcal{A}} (i+1) b^i)^{-1}$ and $c=(\sum_{i \in \mathcal{A}} b^{i})^{-1}$. Let $ \mathcal{T}_{n}$  be a ${\rm BGW}^{\mu_\varnothing^{\mathcal{A}}, \mu^{ \mathcal{A} }}_n$ tree and conditionally given $\mathcal{T}_{n}$,  let $\mathcal{C}(\mathcal{T}_{n})$ be a uniform element of $ \mathbb{C}(\mathcal{T}_{n})$. Finally, set $\mathscr{T}_{n}^{ \mathcal{A} }= \Phi_{n}^{-1} \left( ( \mathcal{T}_{n}, \mathcal{C}( \mathcal{T}_{n}) ) \right)$.
Then $ \mathscr{T}_{n}^{ \mathcal{A} }$  is uniformly distributed in $\NC_{n}^{\mathcal{A}}$. Indeed, this simply follows from the fact that $\Phi_{n}$ is a bijection between $\NC_{n}^{\mathcal{A}}$ and $\T_{n}^{ \mathcal{A} ,\mathsf{dec}}$ and that $\mathcal{T}_{n}$ is uniformly distributed on $\T_{n}^{ \mathcal{A} }$ by Theorem \ref{thm:arbre_SG_presque_GW}. 

Now fix $\tau \in \T_{n}^{ \mathcal{A} }$ and $\mathbf{c}\in \mathbb{C}(\tau)$. By the previous discussion, we have
$$ \frac{1}{ \# \NC_{n}^{\mathcal{A}}}= \Pr{( \mathcal{T}_{n}, \mathcal{C}( \mathcal{T}_{n}) )=(\tau,\mathbf{c})}= \Pr{ \mathcal{T}_{n}= \tau } \cdot \frac{1}{\# \mathbb{C}(\tau)}= \frac{{\rm BGW}^{\mu_\varnothing^{\mathcal{A}}, \mu^{ \mathcal{A} }}( \mathcal{T} =\tau)}{{\rm BGW}^{\mu_\varnothing^{\mathcal{A}}, \mu^{ \mathcal{A} }}(| \mathcal{T}|=n )} \cdot \frac{1}{\prod_{u \in \tau \setminus \{\varnothing\}  }(k_{u}+1)}.$$
However, by definition, 
$${\rm BGW}^{\mu_\varnothing^{\mathcal{A}}, \mu^{ \mathcal{A} }}( \mathcal{T} =\tau)=c b^{k_{\varnothing}} \cdot {\prod_{u \in \tau \setminus \{\varnothing\}} a (k_{u}+1) b^{k_{u}}} = c \cdot (ab)^{n-1} \cdot {\prod_{u \in \tau \setminus \{\varnothing\}}} (k_{u}+1).$$
As a consequence $\# \NC_{n}^{\mathcal{A}}= c^{-1}\cdot (ab)^{-(n-1)} \cdot {\rm BGW}^{\mu_\varnothing^{\mathcal{A}}, \mu^{ \mathcal{A} }}(| \mathcal{T}|=n )$.
Since  $\mu^{ \mathcal{A} }$ has finite variance, an adaptation of Lemma \ref{lem:maintool} (i) to the possibly periodic case yields
$${\rm BGW}^{\mu_\varnothing, \mu}( |\mathcal{T}| =n)  \quad \mathop{\sim}_{n \rightarrow \infty} \quad \gcd( \mathcal{A}-1) \cdot  \frac{1}{\sqrt{4\pi}}\cdot \left(\sum_{k \ge 1} k \mu_\varnothing(k) \right)  \cdot \frac{1}{n \cdot \sigma_{ \mathcal{A} } \sqrt{n/2}},$$
where $\sigma_{ \mathcal{A} }^{2}$ is the variance of $\mu^{ \mathcal{A} }$ and $n$ is chosen such that $n \equiv 2 \pmod{\gcd( \mathcal{A}-1)}$.  Hence
$$ \# \NC_{n}^{\mathcal{A}}  \quad \mathop{\sim}_{n \rightarrow \infty} \quad \gcd( \mathcal{A}-1)  \frac{1}{\sqrt{2\pi \sigma_{ \mathcal{A} }^{2}}} \cdot \left( \sum_{k \geq 1} k \mu^{ \mathcal{A} }_{\varnothing}(k) \right) \cdot \frac{1}{c}    \cdot (ab)^{-(n-1)} \cdot n^{-3/2}.$$
The conclusion follows.
\end{proof}

\section{Iterating laminations, \emph{ad libitum}}
\label{sec:extensions}

Recall that in Section \ref{sec:trilam}, we have constructed a triangulation $L(\X,\boldsymbol{\ell})$ from the stable lamination $L(\X)$ by triangulating each one of its faces. In the last part of this paper, we propose other ways to fill-in the faces of stable laminations. 

The study of {multiple} iterated real-valued processes has been triggered by the work of Curien \& Konstantopoulos \cite{CK14}, which were motivated by the iteration of two Brownian motions considered by Burdzy \cite{Bur93}. Casse \& Marckert \cite{CM15} then studied the iteration of reflected Brownian motion as well as the iteration of stable processes. Here we propose to iterate  laminations, in a sense that will be made precise in the following lines.

\begin{defn} Let $V$ be a face of a lamination of $\Db$. If $V$ is a triangle,  we say that $V$ is decorated by convention. Otherwise, a decoration of $V$ is an order preserving surjection $\phi_{V}: \S \rightarrow \partial V \cap \S$. Intuitively, we can view $\phi_{V}$ as an inverse of the evolution of the  ``number'' of vertices belonging to  $\partial V \cap \S$ as one goes around $\S$. 
A decorated lamination is by definition a lamination with a decoration associated with every face. 

Let $ (V, \phi_{V})$ be a decorated face and $ {L}$ be a  lamination of $\overline{\D}$. If $F$ is a face of $L$, set
$$V_{F}= \overline{ \bigcup_{[u,v] \in \partial F} [\phi_{V}(u),\phi_{V}(v)]}$$
and
$$ {V}(L)= \overline{V} \cup  \bigcup_{F \textrm{ face of } L} V_{F},$$
which is a lamination such that  every face of $ {V}(L)$ is the ``interior'' of $V_{F}$ for some face $F$ of $L$. In addition, if $ {L}$ is a \emph{decorated} lamination of $\overline{\D}$, $V(L)$ can be seen as a decorated lamination by setting $\phi_{V_{F}}= \phi_{V} \circ \phi_{F}$ for every decorated face $(F,\phi_{F})$ of $L$.

Now let $L^{0}$ be a decorated lamination, and let $ \mathcal{L}=(L_{V})_{V \textrm{ face of } L^{0}}$ be a collection of 
laminations indexed by the faces of $L^{0}$. Then set
$$  \mathcal{L} \circ L^{0} = \bigcup_{V \textrm{ face of } L^{0}} V(L_{V}).$$
It is possible to check that $\mathcal{L} \circ L^{0} $ is a lamination. Intuitively, it is obtained from $L^{0}$ by inserting the lamination $L_{V}$ inside each face $V$ of $L^{0}$.
In addition, if $ \mathcal{L}=(L_{V})_{V \textrm{ face of } L^{0}}$ is a collection of \emph{decorated} laminations, then $\mathcal{L} \circ L^{0} $ is a decorated lamination. \end{defn}

An important example is the $\alpha$-stable lamination $L(\X)$, which can be seen as a decorated lamination: if $\alpha \in (1,2)$ and if $u$ is a jump time of $\X$,  the bijection $p_{u}$ defined by \eqref{eq:pu} is a decoration of the face coded by $u$ (with the usual identification of $\S$ with $[0,1]$).  It is actually possible to check that given a stable lamination $\mathbf{L}_{\alpha}$, we can recover the decorations $p_{u}$ in a measurable way up to scaling factors by using approximations of local times, but we do not enter into the details since we do not require this fact.

\begin{defn}Fix $n \geq 1$ and let $\alpha_{1}, \ldots, \alpha_{n-1} \in (1,2)$ and $\alpha_{n} \in (1,2]$. Set $\boldsymbol{\alpha}=(\alpha_{1}, \ldots, \alpha_{n})$.  Then $ \mathbf{L}_{\boldsymbol{\alpha}}$ is the random decorated lamination defined recursively as follows. First, $ \mathbf{L}_{(\alpha_{1})}$ is just the $\alpha_{1}$-stable lamination (which is a decorated lamination as seen above). Next, conditionally given $\mathbf{L}_{(\alpha_{1}, \ldots, {\alpha}_{n-1})}$, let $ \mathcal{L}_{\alpha_{n}}=(\mathbf{L}^{F}_{\alpha_{n}})_{F \textrm{ face of } \mathbf{L}_{(\alpha_{1}, \ldots, {\alpha}_{n-1})}}$ be a collection of independent $\alpha_{n}$ stable laminations indexed by the faces of  $\mathbf{L}_{(\alpha_{1}, \ldots, {\alpha}_{n-1})}$, which we view as decorated as explained above. Then set
$$\mathbf{L}_{(\alpha_{1}, \ldots, {\alpha}_{n-1},\alpha_{n})}= \mathcal{L}_{\alpha_{n}} \circ  \mathbf{L}_{(\alpha_{1}, \ldots, {\alpha}_{n-1})}.$$
Intuitively, $\mathbf{L}_{(\alpha_{1}, \ldots, {\alpha}_{n-1},\alpha_{n})}$ is obtained from $ \mathbf{L}_{(\alpha_{1}, \ldots, {\alpha}_{n-1})}$ by inserting independent $\alpha_{n}$-stable laminations inside every face of $ \mathbf{L}_{(\alpha_{1}, \ldots, {\alpha}_{n-1})}$.
\end{defn}

Note that the lamination $\mathbf{L}_{(\alpha_{1}, \ldots, {\alpha}_{n})}$ is maximal if and only if $\alpha_{n}=2$. We believe that the Hausdorff dimension $\dim(\mathbf{L}_{(\alpha_{1}, \ldots, {\alpha}_{n})})$ is almost surely equal to
\begin{equation}
\label{eq:Hitere}\max \left( 2- \frac{1}{\alpha_{1}},1+ \frac{1}{\alpha_{1}} \left( 1- \frac{1}{\alpha_{2}}\right) , 1+ \frac{1}{\alpha_{1} \alpha _{2}} \left( 1- \frac{1}{\alpha_{3}} \right), \ldots, 1+ \frac{1}{\alpha_{1} \alpha_{2} \cdots \alpha_{n-1}} \left( 1- \frac{1}{\alpha_{n}} \right)  \right) .
\end{equation}
Indeed, the decorations of the faces of $\mathbf{L}_{(\alpha_{1}, \ldots, {\alpha}_{k})}$ are closely related to the iteration of stable subordinators of indices $1/\alpha_{1}, 1/\alpha_{2}, \ldots, 1/\alpha_{k}$, and one should be able to adapt \cite[Sec.~5]{Kor11} to show that the the boundaries of the faces of $\mathbf{L}_{(\alpha_{1}, \ldots, {\alpha}_{k})}$ restricted to $\S$ have Hausdorff dimension $(\alpha_{1} \cdots \alpha_{k})^{-1}$, so that  $\mathbf{L}_{(\alpha_{1}, \ldots, {\alpha}_{k})} \setminus \mathbf{L}_{(\alpha_{1}, \ldots, {\alpha}_{k-1})}$ has Hausdorff dimension $1+ \frac{1}{\alpha_{1} \alpha_{2} \cdots \alpha_{k-1}} \left( 1- \frac{1}{\alpha_{k}} \right) $. However, we have not worked out the details.

\begin{ques} If $\boldsymbol{\alpha} \neq \boldsymbol{\alpha'}$, is true that the laws of $ \mathbf{L}_{\boldsymbol{\alpha}}$ and $ \mathbf{L}_{\boldsymbol{\alpha'}}$ are singular with respect to each other?
\end{ques}
If $(\alpha_{1},\alpha_{2}) \neq (\alpha'_{1}, \alpha'_{2})$, assuming that \eqref{eq:Hitere} holds, one can check that $\dim(\mathbf{L}_{(\alpha_{1},\alpha_{2}) }) \neq \dim(\mathbf{L}_{(\alpha_{1},\alpha_{2}) })$. However, still assuming that \eqref{eq:Hitere} is true, we have $\dim(\mathbf{L}_{(1.1,1.2,2)})= \dim(\mathbf{L}_{(1.2,1.1,2)})$. Another direction would be to find out what happens to $\mathbf{L}_{(\alpha_{1}, \ldots, {\alpha}_{n})}$ as $n \rightarrow \infty$.

We believe that $\mathbf{L}_{(\alpha_{1}, \ldots, \alpha_{n})}$ is the scaling limit of a modified version of random dissections considered in \cite{Kor11}: instead of just choosing a random dissection of a large polygon according to critical Boltzmann weights in the domain of attraction of a stable law, first sample a random dissection with such Boltzmann weights in the domain of attraction of an $\alpha_{1}$-stable law, then inside each face of the dissection independently sample again a random dissection with Boltzmann weights in the domain of attraction of an $\alpha_{2}$-stable law, and so on. Similarly, as in \cite{KM15}, one can consider a random noncrossing partition with Boltzmann weights in the domain of attraction of an $\alpha_{1}$-stable law, then partition each block independently at random using a noncrossing partition with Boltzmann weights in the domain of attraction of an $\alpha_{2}$-stable law, and so on.

\begin{ques}In a certain sense, the $\alpha$-stable random lamination can be seen as the dual of the $\alpha$-stable tree. As was suggested to us by Nicolas Curien, iterating stable laminations can be alternatively seen as iterating stable trees. Roughly speaking, start with a stable tree of index $\alpha_{1}$, and then ``explode'' each branch point by gluing inside a stable tree of index $\alpha_{2}$, and so on. What is the Hausdorff dimension of the random tree constructed in this way? What happens  as $n \rightarrow \infty$? We hope to investigate this in a future work. 
\end{ques}

Note that if one starts with a stable tree and explodes each branchpoint by simply gluing inside a ``loop'', one gets the so-called stable looptrees which were introduced and studied in \cite{CK13}. More generally, one can imagine exploding branchpoints in stable trees and glue inside any compact metric space equipped with a homeomorphism with $[0,1]$.

\bibliographystyle{siam}
%\bibliography{bibli}

\end{document}